  \newcommand{\LE}{\mathcal{LE}}
   \renewcommand{\H}{\mathcal{H}}
  \newcommand{\B}{\mathcal{B}}
 \newcommand{\cB}{\mathcal{B}}
\newcommand{\cD}{\mathcal{D}}
\newcommand{\G}{\mathcal{G}}
\newcommand{\cZ}{\mathcal{Z}}
\newcommand{\gG}{\Gamma}
\newcommand{\C}{\mathbb{C}}
\newcommand{\T}{\mathbb{T}}
\newcommand{\Q}{\mathbb{Q}}
\newcommand{\R}{\mathbb{R}}
\newcommand{\E}{\mathbb{E}}
\newcommand{\N}{\mathbb{N}}
\newcommand{\Z}{\mathbb{Z}}
\newcommand{\norm}[1]{\left\Vert #1\right\Vert}
\newcommand{\nnorm}[1]{\lvert\!|\!| #1|\!|\!\rvert}
\theoremstyle{plain}
\newtheorem{theorem}{Theorem}[section]
\newtheorem{lemma}[theorem]{Lemma}
\newtheorem{proposition}[theorem]{Proposition}
\newtheorem*{theoremA}{Theorem A}
\newtheorem*{theoremB}{Theorem B}
\newtheorem*{theoremC}{Theorem C}
\newtheorem*{conjectureA}{Conjecture A}
\newtheorem*{conjectureB}{Conjecture B}
\newtheorem*{theoremA'}{Theorem A'}
\newtheorem*{theoremB'}{Theorem B'}
\newtheorem*{theoremC'}{Theorem C'}
\newtheorem*{theorem*}{Theorem}
\newtheorem*{Correspondence1}{Furstenberg Correspondence Principle~(\cite{Fu1}, \cite{Be1})}
\newtheorem{corollary}[theorem]{Corollary}
\theoremstyle{definition}
\newtheorem{definition}[theorem]{Definition}
\newtheorem{example}{Example}
\theoremstyle{remark}
\newtheorem*{remark}{Remark}
\newtheorem*{remarks}{Remarks}
\begin{document}

\title{A Hardy field extension of Szemer\'edi's theorem} \author{Nikos
  Frantzikinakis}
\address[Nikos  Frantzikinakis]{Department of Mathematics\\
  University of Memphis\\
  Memphis, TN \\ 38152 \\ USA } \email{frantzikinakis@gmail.com}

\author{M\'at\'e Wierdl}
\address[M\'at\'e Wierdl]{Department of Mathematics\\
  University of Memphis\\
  Memphis, TN \\ 38152 \\ USA } \email{wierdlmate@gmail.com}

\begin{abstract}
  In 1975 Szemer\'edi proved that a set of integers of positive upper
  density contains arbitrarily long arithmetic progressions. Bergelson
  and Leibman showed in 1996 that the common difference of the
  arithmetic progression can be a square, a cube, or more generally of
  the form $p(n)$ where $p(n)$ is any integer polynomial with zero
  constant term.  We produce a variety of new results of this type
  related to sequences that are not polynomial. We show that the
  common difference of the progression in Szemer\'edi's theorem can be of the form
  $[n^\delta]$ where $\delta$ is
  any positive real number and $[x]$ denotes the integer part of $x$.
  More generally, the common difference can be of the form $[a(n)]$
  where $a(x)$ is any function that is a member of a Hardy field and satisfies
  $a(x)/x^k\to \infty$ and $a(x)/x^{k+1}\to 0$ for some non-negative
  integer $k$.
  %%This allows us for example to deal with functions that
  %%can be constructed by a finite combination of the ordinary
  %%arithmetical symbols, the real constants, the real variable $x$, and
  %%the functional symbols $\exp$ and $\log$, and satisfy the previous
  %%growth assumptions.
  The proof combines a new structural result for Hardy sequences,
  techniques from ergodic theory, and some recent equidistribution
  results of sequences on nilmanifolds.
\end{abstract}

\thanks{The first  auth\textbf{o}r was partia\textbf{l}ly supported by NSF \textbf{g}r\textbf{a}nt
  DMS-0701027 and the second by   DMS-0801316.}

\subjclass[2000]{Primary: 37A45; Secondary: 28D05, 05D10, 11B25}

\keywords{Hardy fields, multiple recurrence, multiple ergodic averages, arithmetic
  progressions.}

\maketitle
%% \centerline{\today}

\setcounter{tocdepth}{1}%%Only sections appear in Contents
\tableofcontents
\section{Introduction and main results}
\subsection{Introduction}
In 1975 Szemer\'edi~(\cite{Sz}) answered a long standing question of
Erd\"{o}s and Tur\'an (1936, \cite{ET}), showing that a set of
integers of positive upper density\footnote{If $\Lambda\subset \Z$,
  the {\it upper density} of $\Lambda$ is the number
  $\bar{d}(\Lambda)=\limsup_{N\to\infty}|\Lambda\cap \{-N,\ldots,N\}|/(2N+1)$.
  If the previous limit exists we call it the {\it
    density} of $\Lambda$ and denote it by $d(\Lambda)$.} contains
arbitrarily long arithmetic progressions, or equivalently, for every
$\ell\in\N$, patterns of the form
\begin{equation}\label{E:d}
  \{m, m+d,m+2d,\ldots, m+\ell d\}
\end{equation}
for some $m\in \Z$ and $d\in \N$. This result has been very influential,
several different proofs and extensions have been found, and the tools
developed in the process led to applications in
several diverse  fields, that include
combinatorics, number theory, harmonic analysis, ergodic theory, and
theoretical computer science.

In this article we are interested in obtaining refinements of
Szemer\'edi's theorem by restricting the scope of the common
difference $d$.
%%We are by no means the first to deal with this
%%problem. In fact,
 During the last thirty years several related refinements
have been obtained, most notably a result of Bergelson and Leibman
(\cite{BL}), who showed that $d$ can be taken to
%% belong to any IP set meaning a set that consists of distinct sums
%% (with distinct entries of some infinite set) of some infinite set
%% (\cite{FuK2}), and $d$ can be a square or more generally
be of the form $p(n)$ where $p$ is any
non-constant integer polynomial with $p(0)=0$. This  had been
previously established for $\ell=1$ by Sark\"ozy (\cite{Sa}) and
Furstenberg (\cite{Fu2}).  More examples, related to IP sets,
generalized polynomials, polynomials with non-zero constant term, and
the set of prime numbers, can be found in \cite{FuK2}, \cite{BM},
\cite{M}, \cite{BHM}, \cite {Fr}, \cite{FHK}.  All these results were
obtained using (in addition to other tools) methods that emerged from
the pioneering paper of Furstenberg~(\cite{Fu1}), where ergodic theory
was used to give a new proof of Szemer\'edi's theorem.
%% (\cite{Sa} for $k=1$, \cite{BL} for general $k$), or, for $k=1,2$
%% of the form $p-1$ or $p+1$ where $p$ %%is prime (\cite{Sa} for $k=1$, \cite{FHK} for $k=2$). Several other examples related to generalized %%polys.

We shall produce a variety of new examples given by sequences that are
not polynomial, and range from simply defined to rather exotic looking.
For example, we shall show that the common difference $d$ in
\eqref{E:d} can be taken to be of the form
\begin{equation}\label{E:examples1}
  [n^{\sqrt{2}}], \ [n\log n], \ [\sqrt{3}\ \! n^{5/2}+n\log n],
  \ [n^2/\log\log n],  \ \Big[\sqrt{n^{2008}+(\log n)^{2/3}}+n^2e^{-\sqrt[3]{\log n}}\Big],
\end{equation}
where $[x]$ denotes the integer part of $x$, or the form
\begin{equation}\label{E:examples2}
 [\log(n!)], \quad
  [\log(\Gamma(n^{3/2}))],\quad [n^2\sin(1/\log{n})], \quad [n^{5/2}
  \zeta(n)],\quad [n^{1+1/n}\text{Li}(n)],
\end{equation}
where $\Gamma$ is the Gamma function, $\zeta$ is the Riemann zeta
function, and $\text{Li}$ is the logarithmic integral function
(defined by $\text{Li}(x)=\int_2^x 1/\log t\ dt$).

%% The previous list of sequences may look rather random, so let us
%% pinpoint some properties they share that will be very helpful for
%% our purposes.  Firstly, they all have non-polynomial growth, this
%% is helpful since there is an abundance of examples %%of polynomial sequences that we would like to avoid (see Sections~\ref{SS:ergodic} and \ref{SS:single}).
%% On the other hand, the non-polynomial structure of these sequences
%% makes them hard to work with, since %%there are no general techniques for studying non-polynomial refinements of Szemer\'edi's theorem. We'll %%see how we will overcome this problem later.  A second helpful thing is that all the previous sequences %%were constructed by evaluating smooth functions on the integers.
%% Unfortunately, these two good properties are not sufficient by
%% themselves; we can easily construct %%sequences that satisfy them but consist only of odd integers, a bad thing for our purposes.

A more illuminating (but incomplete) description of the class of
functions for which our result apply is as follows: By $\LE$ we denote
the collection of \emph{logarithmico-exponential functions} of Hardy
(\cite{Ha1}, \cite{Ha2}), consisting of all functions that can be
constructed using the real constants, the functions $e^x$ and
$\log{x}$, and the operations of addition, multiplication, division,
and composition of functions, as long as the functions constructed are
well defined for large $x$.  We shall show that if $a\in \LE$, then the
common difference $d$ in \eqref{E:d} can have the form $[a(n)]$  as long as $a$ satisfies
the growth condition $x^k
\prec a(x) \prec x^{k+1}$ for some non-negative integer $k$. The
examples in \eqref{E:examples1} are of this type.

In fact, our result applies to the much larger class of functions that
belong to some Hardy field (a notion first introduced by Bourbaki
(\cite{Bou})) and satisfy the previous growth restrictions. This will
enable us to deal with the sequences in \eqref{E:examples2} as well.
%% , p. 107):
\begin{definition}
  Let $B$ be the collection of equivalence classes of real valued
  functions $a(x)$ defined on some half line $(u,\infty)$, where we
  identify two functions if they agree for all large $x$.\footnote{The
    equivalence classes just defined are often called \emph{germs of
      functions}. We choose to use the word function when we refer to
    elements of $B$ instead, with the understanding that all the
    operations defined and statements made for elements of $B$ are
    considered only for sufficiently large values of $x\in \R$.}  A
  \emph{Hardy field} is a subfield of the ring $(B,+,\cdot)$ that is
  closed under differentiation. By $\H$ we denote the \emph{union of all
  Hardy fields}.
\end{definition}
Hardy fields have been used to study solutions of differential
equations (\cite{Bo1}, \cite{Bo2}, \cite{Bo2c}, \cite{R1}, \cite{R2}),
difference and functional equations (\cite{Bo2a}, \cite{Bo2b}),
properties of curves in $\R^2$ (\cite{Dr}), equidistribution results
of sequences on the torus (\cite{Bo3}), and convergence properties of ergodic averages
 (\cite{BKQW}).  We
collect some results that illustrate the richness of $\H$:

%%\begin{itemize}
$\bullet$ $\H$ contains $\LE$ and  anti-derivatives of elements of $\LE$.

$\bullet$ $\H$ contains several other functions not in $\LE$, like the
functions $\Gamma(x)$, $\zeta(x)$,  $\sin{(1/x)}$.

$\bullet$ If $a\in \LE$ and $b\in\H$, then  there exists a Hardy field
containing both $a$ and $b$.

$\bullet$ If $a\in\LE$, $b\in \H$, and $b(x)\to \infty$, then $a\circ
  b\in \H$.

 \ \ \  If $a\in\LE$, $b\in \H$, and $a(x)\to \infty$, then $b\circ a\in \H$.

$\bullet$ If $a$ is a continuous function that is algebraic over some
  Hardy field, then $a\in\H$.
%%\end{itemize}

%% $\bullet$ $\H$ contains solutions to several functional and
%% differential equations not in $\LE$, like solutions of
%% $f(f(x))=e^x$ and all solutions of $y''-xy=0$.

We mention some basic properties of elements of $\H$ relevant to our study. Every element of $\H$ has eventually constant sign
(since it has a multiplicative inverse).
 Therefore, if  $a\in \H$, then $a$ is eventually monotone (since $a'$ has eventually constant sign), and  the limit
$\lim_{x\to \infty} a(x)$ exists (possibly infinite).  Since  for every two
functions $a\in \H, b\in \LE$ $(b\neq 0)$, we have $a/b\in \H$, it follows that   the asymptotic growth ratio
$\lim_{x\to \infty}a(x)/b(x)$ exists (possibly infinite). This last property is key,
since it will often justify our use of  L'Hospital's rule.
\emph{We are going to freely use all these properties without any further explanation in the sequel.}

\subsection{Results in combinatorial language}
%% If it is a
%%nonzero constant we say that $a,b$ \emph{have the same growth rate}
%%and write $a(x)\asymp b(x)$.

The following is our main result:
\begin{theoremA}
  Let $a\in \H$ satisfy $x^k\prec a(x) \prec x^{k+1}$ for some non-negative
  integer $k$. Let $\ell\in \N$.

  Then  every $\Lambda\subset\mathbb{Z}$ with
  $\bar{d}(\Lambda)>0$ contains arithmetic progressions of the form
  \begin{equation}\label{E:theoremA}
    \{m, m+[a(n)],m+2[a(n)],\ldots, m+\ell[a(n)]\}
  \end{equation}
  for some $m\in \Z $ and $n\in\N$ with $[a(n)]\neq 0$.
\end{theoremA}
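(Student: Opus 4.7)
The plan is to apply the Furstenberg correspondence principle to translate Theorem A into an ergodic statement, and then to analyze the multiple recurrence along $[a(n)]$ using the machinery of characteristic factors and equidistribution on nilmanifolds. Concretely, it suffices to show that for every invertible measure-preserving system $(X,\cB,\mu,T)$ and every $A\in\cB$ with $\mu(A)>0$, one has
\[
  \liminf_{N\to\infty}\frac{1}{N}\sum_{n=1}^{N}\mu\bigl(A\cap T^{-[a(n)]}A\cap T^{-2[a(n)]}A\cap\cdots\cap T^{-\ell[a(n)]}A\bigr)>0;
\]
any $n$ contributing a positive term (with $[a(n)]\neq 0$, which holds for all large $n$ since $a(x)\succ x^{k}\succeq 1$) produces the required progression in $\Lambda$.

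The next step is to study the multilinear averages
\[
  \frac{1}{N}\sum_{n=1}^{N} T^{[a(n)]}f_{1}\cdot T^{2[a(n)]}f_{2}\cdots T^{\ell[a(n)]}f_{\ell}
\]
in $L^{2}(\mu)$. The strategy is to show that the $\ell$-step nilfactor $Z_{\ell}$ of Host--Kra is characteristic for these averages, i.e.\ one may replace each $f_i$ by its conditional expectation on $Z_\ell$ without changing the limit. The natural route is a van der Corput / PET-style differencing argument, where the growth condition $x^{k}\prec a(x)\prec x^{k+1}$ guarantees that successive Cauchy--Schwarz-plus-shift steps strictly decrease some complexity parameter (derivatives of $a$ drop in order and remain in $\H$). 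Once the reduction to $Z_\ell$ is achieved, the problem collapses to analyzing the ergodic average on an inverse limit of nilsystems and, ultimately, to the model case
\[
  \frac{1}{N}\sum_{n=1}^{N} F_{1}(b^{[a(n)]}x)\,F_{2}(b^{2[a(n)]}x)\cdots F_{\ell}(b^{\ell[a(n)]}x)
\]
on a nilmanifold $G/\Gamma$, with $b\in G$ and continuous $F_{i}$.

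The core analytic input is an equidistribution theorem for Hardy sequences on nilmanifolds: for every nilrotation $b$ and every $x\in G/\Gamma$, the orbit $\bigl\{(b^{[a(n)]}x,b^{2[a(n)]}x,\ldots,b^{\ell[a(n)]}x)\bigr\}_{n\in\N}$ equidistributes on the orbit closure of the integer polynomial diagonal $\{(b^{t}x,b^{2t}x,\ldots,b^{\ell t}x):t\in\Z\}$. I would derive this by combining the structural result for Hardy sequences announced in the abstract, which (roughly) approximates $a$ on scale-$N$ windows by a polynomial plus a slowly varying remainder, with the quantitative equidistribution theorems of Green--Tao and Leibman for polynomial orbits on nilmanifolds; the integer part is treated by smoothly approximating indicators and invoking equidistribution of $\{a(n)\alpha\}$ modulo one (a Boshernitzan-type result). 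Once the equidistribution is in place, the Cesàro limit of the nilmanifold average coincides with the analogous limit along integer $t$, which is strictly positive by the polynomial Szemer\'edi theorem of Bergelson--Leibman (applied to the constant polynomial-orbit integral side), thereby giving the required positivity.

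The hard part is the equidistribution step. Since $a$ is generally not a polynomial, Leibman's theorem does not apply directly; one must quantify the non-polynomial oscillations of $a$ and show they do not interfere with equidistribution on arbitrary nilmanifolds. This is the role of the new structural theorem for Hardy functions, and it is here that the growth restriction $x^{k}\prec a(x)\prec x^{k+1}$ is used decisively: it forces $a^{(k+1)}(x)\to 0$ fast enough for Weyl/van der Corput differencing to close the induction, while $a^{(k)}(x)\to\infty$ prevents $a$ from being asymptotic to an integer-valued polynomial of degree at most $k$, ruling out the resonances with periodic sub-nilmanifolds of $G/\Gamma$ that would otherwise obstruct equidistribution.
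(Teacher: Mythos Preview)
Your outline diverges from the paper at the first substantive step, and the divergence hides a real obstacle. You propose to run PET induction directly on the averages $\frac{1}{N}\sum_{n=1}^{N}T^{[a(n)]}f_{1}\cdots T^{\ell[a(n)]}f_{\ell}$, arguing that van~der~Corput shifts decrease a complexity parameter because ``derivatives of $a$ drop in order.'' The paper explicitly avoids this route and explains why (see the footnote after the sketch in the introduction): differencing $[a(n+h)]-[a(n)]$ does not behave like a polynomial PET step. After roughly $k$ iterations the resulting sequences become essentially constant in $n$, but the constants are \emph{nonzero} and depend on the shift parameters in an uncontrolled way; moreover the integer parts introduce fractional-part errors at each step that do not cancel. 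So the induction does not close in the way it does for integer polynomials with zero constant term, and your claim that $Z_{\ell}$ is characteristic for the full Hardy averages is not established by this argument. (Indeed, $L^{2}$-convergence of these full averages was still open when the paper was written; see Section~\ref{SS:conjectures}.)

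The paper's actual mechanism is different: it never studies the Ces\`aro average along all of $[a(n)]$. Instead, using the Taylor expansion of $a$ together with an equidistribution lemma for $(a(n),a'(n),\ldots,a^{(k-1)}(n))$ on intervals where $a^{(k)}$ is nearly constant, it shows that the \emph{range} of $[a(n)]$ contains, for every $r,m$, finite polynomial blocks $\{r(mn^{k}+p_{r,m}(n)):1\le n\le N_{r,m}\}$ with $N_{r,m}\to\infty$ (Proposition~\ref{P:polypattern}). All subsequent analysis---the characteristic-factor reduction (Proposition~\ref{P:seminorm2}) and the nilmanifold equidistribution (Proposition~\ref{P:equidistributiongeneral})---is carried out for these \emph{polynomial} patterns, where PET induction and Green--Tao quantitative equidistribution apply cleanly. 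Your ``structural result'' intuition is pointing at the right phenomenon, but the paper exploits it to locate polynomial pieces inside the range and then works with those, rather than trying to control the whole Hardy sequence at once. That reframing is the missing idea in your proposal.
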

\begin{remarks}

$\bullet$ For $\ell=1$, Theorem~A can be easily deduced from the equidistribution
    results in \cite{Bo3} and the spectral theorem (see \cite{BKQW}
    for details).

$\bullet$ Our assumption can also be stated in the following equivalent form: $a\in \H$ has  polynomial
 growth  and is \emph{not} of the form $cx^k+b(x)$ for some non-negative integer $k$, non-zero real number $c$, and $b\in\H$ that satisfies $b(x)/x^k\to 0$. So it is functions like $x^2+\log{x}$ or  $\sqrt{2}x^3+ x\log x$ that our present methods do not allow us to handle.

$\bullet$ The assumption that $a(x)$ has  polynomial growth
 is essential if one wants to have sufficient conditions
    that depend only on the growth of the function $a(x)$.\footnote{A result mentioned in
      \cite{Bo3} suggests the possibility that for every $a\in \H$ of
      \emph{super-polynomial growth} there exists $b\in \H$ of the same
      growth, that is, the limit of $b/a$ is a non-zero real constant,
      such that  $b(n)$ is an odd integer for every $n\in\N$. If this is the
      case, then no growth assumption on elements of $\H$ with super-polynomial
       growth will be sufficient for our purposes.}  On the other hand,
    the precise assumptions on $a(x)$ in Theorem~A can probably be
    relaxed (see Conjecture~A in Section~\ref{SS:conjectures}); it certainly is possible for $\ell=1$ (see
    Theorem~C).

%%      $\bullet$ If $a(x)$ is a non-zero polynomial with real coefficients and $a(0)=0$, then  it can be
%% shown using \cite{BL} that the conclusion of  Theorem~A holds. If $a(0)\neq 0$  the situation is more delicate: If $a\in \Z[x]$,
%% then Theorem~A holds if and only if the range of $a(x)$ contains multiples of every positive integer (\cite{Fr}).
%%If $a(x)\neq cp(x)+b$ for every $b,c\in \R$ and $p\in\Z[x]$, then it can be shown that Theorem~A holds.
%%In the remaining cases an easy to state  necessary condition is harder to come by (see Section~\ref{SS:single} for more  details).
$\bullet$
  An immediate corollary is the following coloristic result,
    which we do not see how to prove without using Theorem~A: If $a\in
    \H$ satisfies the growth condition of Theorem~A, then  every
    finite coloring of the integers has a monochromatic
    arithmetic progression of the form \eqref{E:theoremA}.

$\bullet$ Although our result applies to rather exotic sequences, like
    the sequences mentioned in the examples \eqref{E:examples1} and
    \eqref{E:examples2}, simply defined sequences like
    $[n^{\sqrt{5}}]$ seem to be almost as hard to deal with as the
    general case.

$\bullet$ Unlike the case where $a(n)$ is a polynomial with zero constant term, it is not true that
    $\Lambda\cap (\Lambda-[a(n)])\neq \emptyset$ for a set of $n\in\N$
    with bounded gaps. To see this, take $\Lambda=2\Z$, $a(n)=\log n$,
    and notice that $[a(n)]$ takes odd values for every $n\in
    [2^{2l+1},2^{2l+2})$ for every $l\in\N$. With a bit more effort one
    can show that we have the same problem for every $a\in\H$ that  satisfies
    the growth assumptions of  Theorem~A.
\end{remarks}
To prove Theorem~A we first  use the correspondence
principle of Furstenberg  (see Section~\ref{SS:ergodic}) to
translate it into a statement about
multiple recurrence in ergodic theory. The ergodic
method used to prove Szemer\'edi's theorem (\cite{Fu1}) and its
polynomial extension (\cite{BL}) does not seem to apply\footnote{The
  main problem appears when one deals with distal systems. Unlike the
  case of a polynomial with zero constant term, for $a\in \H$
  satisfying $x^k\prec a(x) \prec x^{k+1}$ for some non-negative
  integer $k$, successive applications of the operation
  $[a(n+m)]-[a(n)]-[a(m)]$, $m\in\N$, lead eventually to non-zero
  constant sequences (in $n$) which is a problem when one tries to
  prove the corresponding coloristic (van der Waerden type) result.},
so we  use a different method instead.  Our argument splits into
three parts:

$(i)$ As it turns out, dealing with the full sequence $[a(n)]$ greatly
complicates our study, in particular step $(iii)$ below. Instead,
we show that the range of $[a(n)]$ contains some suitably chosen
polynomial patterns of fixed degree
%%\footnote{The choice of the patterns will also
 %% depend on the set $\Lambda\subset \Z$ that we are given.}
(Proposition~\ref{P:polypattern}), and we work with this collection of
patterns henceforth.
%%\footnote{The choice of the patterns will also
  %%depend on the set $\Lambda\subset \Z$ that we are given.}.
  To obtain these patterns we use the Taylor expansion of the function
  $a(x)$. Since some derivative of $a(x)$ vanishes at infinity,  it makes sense
  to expect
  (but is non-trivial to verify) that the range of $[a(n)]$ has a rich supply of polynomial progressions
  of fixed degree.

$(ii)$ For the polynomial patterns found in $(i)$, we study the
naturally associated multiple ergodic averages, and show that the
nilfactor of the system controls their limiting behavior
(Proposition~\ref{P:seminorm2}). As a consequence, we reduce our problem to
 establishing a certain multiple recurrence property for
nilsystems. This reduction to nilsystems step is carried out using a rather
cumbersome application of the by now standard polynomial exhaustion technique (PET induction);
we use it  to eliminate some undesirable constants and majorize our multiple ergodic averages
 by some polynomial ones that we know how to control.

$(iii)$ We verify the multiple recurrence property for nilsystems
by
comparing the multiple ergodic averages along the polynomial patterns of
part $(i)$ with some easier to handle averages that can be estimated
using Furstenberg's classical multiple recurrence result.  To carry out
the comparison step we need
 an equidistribution result on nilmanifolds
(Proposition~\ref{P:equidistributiongeneral}).  Because our
polynomial patterns consist of finite polynomial blocks rather than a
single infinite polynomial sequence, the result we
need does not seem to follow from the available qualitative
equidistribution results of polynomial sequences on nilmanifolds.
Instead, we adapt a quantitative equidistribution result that was
recently obtained by Green and Tao (\cite{GT}).

To give an example of the polynomial patterns we are led to
consider, let us look at the case of the sequence $[a(n)]$ where
$a\in \H$ satisfies $x\prec a(x) \prec x^{2}$. In this case, we can
show that the range of the sequence $[a(n)]$ contains certain
arithmetic progressions and derive Theorem~A from the following
result (that we find of interest on its own):
\begin{theoremB}
Suppose that for every $r,m\in \N$ the set $S\subset \Z$
  contains patterns of the form $$\{r(c_{r,m}+mn)\colon 1\leq
  n\leq N_{r,m}\}$$ where $c_{r,m}, N_{r,m}$ are integers with $\lim_{m\to\infty}N_{r,m}=\infty$
  for every fixed $r\in \N$.

  Then for every $\ell \in \N$ every
  $\Lambda\subset\mathbb{Z}$ with $\bar{d}(\Lambda)>0$ contains
  patterns of the form
$$
\{m, m+s,r+2s,\ldots, m+\ell s\}
$$
for some $m\in \Z$ and non-zero $s\in S$.
%% If $c_m,N_m\in\N$ with $N_m\to +\infty$, then
%%$$
%%S=\{c_m+mn\colon 1\leq n\leq N_m, m\in \N\}
%%$$
%%is a set of multiple recurrence.
\end{theoremB}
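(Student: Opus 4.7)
The plan is to apply Furstenberg's correspondence principle and then combine quantitative multiple recurrence with Host--Kra structure theory. By the correspondence, there exist a measure-preserving system $(X,\B,\mu,T)$ and a set $A\in\B$ with $\mu(A)=\bar d(\Lambda)>0$ such that, for every $s\in\Z$,
\[
\bar d\bigl(\Lambda\cap(\Lambda-s)\cap\cdots\cap(\Lambda-\ell s)\bigr)\ \geq\ \mu\bigl(A\cap T^{-s}A\cap\cdots\cap T^{-\ell s}A\bigr).
\]
Hence it suffices to find a non-zero $s\in S$ for which the right-hand side is positive. I would search for such $s$ of the form $c_m+mn$ inside the arithmetic progression supplied by the hypothesis, for a well-chosen large $m$.

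For fixed $m$ and $c_m$ the natural object is the ergodic average along this AP,
\[
M_N(m,c_m):=\frac{1}{N}\sum_{n=1}^N\mu\bigl(A\cap T^{-(c_m+mn)}A\cap\cdots\cap T^{-\ell(c_m+mn)}A\bigr);
\]
positivity of $M_{N_m}(m,c_m)$ forces at least one summand to be positive, yielding the desired $s$. Writing $T^{j(c_m+mn)}=T^{jc_m}(T^m)^{jn}$ displays $M_N(m,c_m)$ as an average of correlations of shifts of $\mathbf 1_A$ along iterates of $T^m$. For the unshifted case $c_m=0$, Furstenberg's quantitative multiple recurrence theorem applied to $T^m$ gives $\liminf_{N}M_N(m,0)\ \geq\ \kappa(\mu(A),\ell)>0$ with a constant depending only on $\mu(A)$ and $\ell$.

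The central step is to transfer this positive $\liminf$ to the shifted averages, uniformly in $c_m$. I would do this via the Host--Kra structure theorem, according to which every multi-correlation of this form is controlled by the projections of the relevant functions onto the $\ell$-th nilfactor $Z_\ell$ of $(X,T,\mu)$. On $Z_\ell$ the shift by $c_m$ corresponds to a translation on the representing nilmanifold, and equidistribution theorems for polynomial orbits on nilmanifolds (in the spirit of Leibman, or the quantitative Green--Tao refinement invoked elsewhere in this paper) produce a uniform lower bound $\liminf_{N}M_N(m,c_m)\ \geq\ \kappa/2$, with a threshold on $N$ that depends on $\mu(A),\ell,m$ but not on $c_m$.

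Once this uniform bound is established, the theorem follows immediately: since $N_m\to\infty$, one may pick $m$ so large that $N_m$ exceeds the threshold; then $M_{N_m}(m,c_m)>0$, some summand is positive, and one reads off a non-zero $s=c_m+mn\in S$ with $\mu\bigl(A\cap T^{-s}A\cap\cdots\cap T^{-\ell s}A\bigr)>0$, which via the correspondence gives the required progression in $\Lambda$. The hardest point is the uniformity in $c_m$ of the lower bound: the shifted averages are sensitive to residue-type information that Furstenberg's theorem alone does not control, and the nilfactor reduction together with equidistribution on nilmanifolds is precisely what is needed to master this sensitivity.
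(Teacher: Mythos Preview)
Your overall architecture---correspondence principle, then reduce to nilsystems via Host--Kra, then use nilmanifold equidistribution---matches the paper's. But there is a genuine gap at the key step: the claimed uniformity in $c_m$ is false.

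Take $X=\Z/2\Z$ with $T$ the shift and $A=\{0\}$, so $\mu(A)=1/2$. Then $\mu(A\cap T^{-s}A\cap\cdots\cap T^{-\ell s}A)$ equals $1/2$ when $s$ is even and $0$ when $s$ is odd. For $m=2$ and $c_m$ odd, every $s=c_m+mn$ is odd, hence $M_N(2,c_m)=0$ for every $N$. More generally, whenever the nilfactor has a finite quotient with $r_0>1$ points, the averages $M_N(m,c_m)$ for $m$ divisible by $r_0$ can be made identically zero by a bad choice of $c_m$. So no threshold on $N$ that is independent of $c_m$ can rescue a \emph{fixed} large $m$; the obstruction is exactly the ``residue-type information'' you flag, and nil-equidistribution does not remove it because it presupposes a connected nilmanifold and an ergodic action, neither of which you have arranged.

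The paper avoids this in two ways that you are missing. First, it does not fix $m$: it works with the \emph{double} average $\frac{1}{M}\sum_{m=1}^M\frac{1}{N_m}\sum_{n=1}^{N_m}(\cdots)$, proves that the nilfactor is characteristic for this double average, and then shows via the quantitative Green--Tao equidistribution that on a connected nilmanifold the inner average equidistributes for a density-$1$ set of $m$ (so bad $m$'s as above are harmless). Second---and this is the device that produces connectedness---it first shows (Lemma~\ref{L:rmulties}) that the hypothesis on $S$ is stable under passing to $r$-multiples: for every $r$, $S$ still contains progressions $\{r(c_{r,m}+mn):1\le n\le N_{r,m}\}$. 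One then chooses $r=r_0$ equal to the number of connected components of the relevant nilmanifold, so that $a^{r_0}$ acts ergodically on each component, and only then invokes equidistribution. Your argument would be repaired by incorporating both of these moves; without them the uniform lower bound you assert does not hold.
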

\begin{remarks}
$\bullet$ See Theorem~\ref{T:recurrencepoly} for a result that deals with more general \emph{polynomial} progressions.

  $\bullet$
   If $c_m=0$ for infinitely many $m\in\N$, the result
  follows easily from a finitistic version of Szemer\'edi's theorem. Such
  an easy derivation doesn't seem to be possible when we have no
  (usable) control over the constants $c_m$.
%%  $\bullet$ As it will become clear from the proof, the assumption
%%  ``for every $m\in \N$ ...'' can be relaxed to ``for a set of $m\in
%%  \N$ with positive upper density ...''.
\end{remarks}
To prove Theorem~A we shall need a generalization of Theorem~B
 that deals with more complicated
polynomial patterns (Theorem~\ref{T:recurrencepoly}).  In order to illustrate some of the ideas needed
to prove Theorem~A in their simplest form, we choose to present the proof of
Theorem~B separately.

Next we mention an improvement of our main result for $\ell=1$. This result was first obtained (but never published) several
    years ago by Boshernitzan using a method different than ours.

\begin{theoremC}\label{T:singlehardy}
  Let $a\in\H$ have polynomial growth and suppose that  $|a(x)-cp(x)|\to \infty $ for every $p\in
  \Z[x]$ and
  $c\in \R$.

  Then  every $\Lambda\subset\mathbb{Z}$ with $\bar{d}(\Lambda)>0$,
  contains  $x,y\in \Lambda$ that satisfy  $y-x=[a(n)]$ for some
  $n\in\N$ with $[a(n)]\neq 0$.
\end{theoremC}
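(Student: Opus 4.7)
The strategy is to combine Furstenberg's correspondence principle with the spectral theorem and reduce the theorem to a Weyl-type equidistribution fact about the Hardy sequence $a$. By the correspondence principle, it suffices to produce, for every measure-preserving system $(X,\mathcal{B},\mu,T)$ and every set $A\in\mathcal{B}$ with $\mu(A)>0$, some $n\in\N$ with $[a(n)]\neq 0$ such that $\mu(A\cap T^{-[a(n)]}A)>0$. Let $\sigma$ denote the spectral measure of $\mathbf{1}_A$ on $\T=\R/\Z$, so that $\mu(A\cap T^{-m}A)=\int_{\T}e^{2\pi imt}\,d\sigma(t)$; averaging in $n$ gives
$$
\frac{1}{N}\sum_{n=1}^{N}\mu(A\cap T^{-[a(n)]}A)=\int_{\T}\phi_N(t)\,d\sigma(t),\qquad \phi_N(t):=\frac{1}{N}\sum_{n=1}^{N}e^{2\pi it[a(n)]}.
$$
If I can verify that $\phi_N(t)\to 0$ for every $t\in\T\setminus\{0\}$, then $|\phi_N|\leq 1$ together with dominated convergence forces the above Ces\`aro average to tend to $\sigma(\{0\})\geq\mu(A)^2>0$, so that $\mu(A\cap T^{-[a(n)]}A)>0$ for infinitely many $n$. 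Taking $c=0$ in the hypothesis gives $|a(x)|\to\infty$, whence $[a(n)]\neq 0$ for all sufficiently large $n$, and the proof is complete.

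The heart of the argument is the pointwise claim $\phi_N(t)\to 0$. The hypothesis is equivalent (set $s=1/c$) to the statement that $|sa(x)-p(x)|\to\infty$ for every $s\in\R\setminus\{0\}$ and every $p\in\Z[x]$, which by Boshernitzan's equidistribution theorem for Hardy sequences (\cite{Bo3}) is in turn equivalent to the Fact that $\{sa(n)\}_{n\in\N}$ is uniformly distributed modulo $1$ for every $s\in\R\setminus\{0\}$. I would apply this Fact in two cases. For irrational $t\in\T$, I rewrite $e^{2\pi it[a(n)]}=e^{2\pi ita(n)}\cdot e^{-2\pi it\{a(n)\}}$ and show that the pair $(\{ta(n)\},\{a(n)\})$ is equidistributed on $\T^2$: for every $(k,\ell)\in\Z^2\setminus\{(0,0)\}$ the real number $s=kt+\ell$ is nonzero (using $t\notin\Q$), and the Fact applied to this $s$ gives $\frac{1}{N}\sum_n e^{2\pi isa(n)}\to 0$, so Weyl's criterion on $\T^2$ applies. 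Integrating the bounded Riemann integrable function $F(u,v)=e^{2\pi iu}e^{-2\pi itv}$ on $[0,1)^2$ then yields $\phi_N(t)\to\int_0^1 e^{2\pi iu}\,du\cdot\int_0^1 e^{-2\pi itv}\,dv=0$.

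For $t=p/q\in\T\setminus\{0\}$ rational in lowest terms (so $q\geq 2$), the value $e^{2\pi it[a(n)]}$ depends only on $[a(n)]\bmod q$, and $[a(n)]\equiv r\pmod{q}$ iff $\{a(n)/q\}\in[r/q,(r+1)/q)$. Applying the Fact with $s=1/q$ shows $\{a(n)/q\}$ is equidistributed on $[0,1)$, whence $[a(n)]\bmod q$ is uniformly distributed on $\{0,1,\ldots,q-1\}$, and so $\phi_N(p/q)\to q^{-1}\sum_{r=0}^{q-1}e^{2\pi ipr/q}=0$ since $\gcd(p,q)=1$. The one ingredient requiring external input, and hence the main potential obstacle, is locating Boshernitzan's equidistribution theorem in precisely the form used above; once this is granted, the remainder is a short computation in the spectral method.
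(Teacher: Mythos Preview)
Your approach has a genuine gap, and in fact the intermediate goal $\phi_N(t)\to 0$ for every $t\in\T\setminus\{0\}$ is false under the hypothesis of Theorem~C.

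The problem is the ``Fact'' you attribute to Boshernitzan. His equidistribution theorem (see the footnote in the paper's proof of Theorem~C') requires $|sa(x)-p(x)|\succ\log x$ for every $p\in\Q[x]$, not merely $|sa(x)-p(x)|\to\infty$. These are not equivalent: take $a(x)=2x+\log\log x\in\LE\subset\H$. The hypothesis of Theorem~C holds (for $c=2,\ p(x)=x$ one gets $\log\log x\to\infty$, and every other choice of $c,p$ gives a dominant polynomial term), yet $\{a(n)\}=\{\log\log n\}$ is \emph{not} equidistributed mod~$1$, so your Fact fails for $s=1$. Worse, for this same $a$ and $t=1/2$ one has
\[
\phi_N(1/2)=\frac{1}{N}\sum_{n=1}^N(-1)^{[a(n)]}=\frac{1}{N}\sum_{n=1}^N(-1)^{[\log\log n]},
\]
and since $[\log\log n]=k$ on the interval $[e^{e^k},e^{e^{k+1}})$ whose length dwarfs all previous ones, $\phi_{N}(1/2)$ oscillates close to $(-1)^k$ along $N\approx e^{e^{k+1}}$ and does not tend to $0$. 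So the spectral argument with full Ces\`aro averages cannot succeed in general.

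This is precisely why the paper's proof splits into two cases. When $|a(x)-cp(x)|\succ\log x$ for all $c\in\R$, $p\in\Z[x]$, your argument (equivalently, the result from \cite{BKQW}) does apply. In the remaining case one has $a(x)=cp(x)+b(x)$ with $p\in\Z[x]$, $c\in\R$, and $1\prec b(x)\prec x$; here the paper abandons the full averages and instead restricts to blocks $I_m$ of $n$'s on which $b(n)$ is nearly $m\beta$ for a carefully chosen $\beta$, and then shows that a suitable double average over $(m,n)$ (further restricted to a set $J$ controlling the fractional part) vanishes for every $t\in(0,1)$ via Lemma~\ref{L:t}. The key ingredient you are missing is exactly this second case.
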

%%\begin{remark}
%% The conclusion of Theorem~C also holds for functions $a\in \H$ of
%%  the form   $a=p+b$ where $p\in\R[x]$ with $p(0)=0$ and $b$ satisfies
%%    $x^k\prec b(x) \prec x^{k+1}$ for some non-negative integer $k$.
%%    This is not hard to check using  Theorem~C.
%%\end{remark}
The proof of Theorem~C is rather different (and much easier) than the
proof of our main result (Theorem~A). In order not to digress from our
main objective we give it in the Appendix.

Although we were not able to prove Theorem~A under the more relaxed
assumptions of Theorem~C, we believe that the corresponding stronger
statement should be true (see Conjecture~A below).
%% \begin{conjecture}
%%   The conclusion of Theorem~A remains true for every $\ell\in\N$
%%   under the relaxed hypothesis of Theorem~C.
%% \end{conjecture}

\subsection{Results in ergodic language}\label{SS:ergodic}
All along the article we shall use the term {\it measure preserving
  system}, or the word {\it system}, to designate a quadruple
$(X,\mathcal{B},\mu, T)$, where $(X,\mathcal{B},\mu)$ is a Lebesgue
probability space, and $T\colon X\to X$ is an \emph{invertible}
measurable map such that $\mu(T^{-1}A)=\mu(A)$ for every
$A\in\mathcal{B}$. The necessary background from ergodic theory is
given in Section~\ref{S:backgroundergodic}.

We shall use the following correspondence
principle of Furstenberg (the formulation given is from \cite{Be1}) to
reformulate Theorems~A, B, and C, in
ergodic theoretic language:
\begin{Correspondence1}\label{correspondence} Let $\Lambda$ be a
  set of integers.

  Then there exist a system $(X,\B,\mu,T)$ and a set
  $A\in\mathcal{B}$, with $\mu(A)=\bar{d}(\Lambda)$, and such that
  \begin{equation}\label{E:correspondence}
    \bar{d}(\Lambda \cap (\Lambda-n_1)\cap\ldots\cap
    (\Lambda-n_\ell))\geq \mu(A\cap T^{-n_1}A\cap\cdots \cap T^{-n_\ell}A),
  \end{equation}
  for every $n_1,\ldots,n_\ell\in\Z$ and $\ell\in\N$.
\end{Correspondence1}
 For convenience we give the following
definition:
\begin{definition}
  If $\ell\in\N$, we say that the set $S$ of integers is {\it a set of
    $\ell$-recurrence for the system $(X,\mathcal{B},\mu,T)$}, if for
  every $A\in \mathcal{B}$ with $\mu(A)>0$ we have
  \begin{equation}\label{E:one}
    \mu(A\cap T^{-s}A \cap T^{-2s}A\cap\dots \cap T^{-\ell s}A )>0
    \text{ for some non-zero } s\in S.
  \end{equation}
  We say that the set of integers $S$ is {\it a set of
    $\ell$-recurrence}, or {\it good for $\ell$-recurrence}, if it is
  a set of $\ell$-recurrence for every system. If $S$ is a set of
  $\ell$-recurrence for every $\ell\in\N$, we say that $S$ is \emph{a
    set of multiple recurrence}.
\end{definition}
\begin{remarks} $\bullet$ If $S$ is a set of
  $\ell$-recurrence, then \eqref{E:one} will be in fact satisfied for
  infinitely many $s\in S$ (in fact $S\cap m\Z$ is also a set of
  $\ell$-recurrence for every $m\in\N$).

  $\bullet$ We get a similar definition for sequences of integers by
  letting $S$ to be the range of the sequence. In this case we say that
  a sequence is \emph{good for $\ell$-recurrence}, or \emph{good for multiple recurrence}.
\end{remarks}
Let us give some examples of sets of multiple recurrence and also
mention some obstructions to recurrence. In the introduction we
mentioned that if $p\in \Z[x]$ is non-constant and $p(0)=0$, then the
sequence $p(n)$ is good for multiple recurrence (\cite{BL}).
%% More generally, if $p\in \R[x]$ is non-constant and $p(0)=0$ then
%% the sequence $[p(n)]$ is good for %%multiple recurrence (\cite{BH}).
Other examples of sets of multiple recurrence are IP sets,
meaning sets that consist of all finite sums (with distinct entries)
of some infinite set (\cite{FuK2}), and sets of the form
$\bigcup_{n\in\N}\{a_n,2a_n,\ldots,na_n\}$ where $a_n\in\N$ (follows
from a finite version of Szemer\'edi's theorem).

Examples of sets that are bad for single recurrence are sets that do
not contain multiples of some positive integer, and also the range of
lacunary sequences. It follows that the sequences $3n+2, n^2+1, p+2$
$(p \text{ prime})$, $n!$, are bad for single recurrence. The set
$\big\{n\in\N\colon \{n\sqrt{5}\}\in [1/2,3/4]\big\}$ and the sequence
$[\sqrt{5}n+2]$ are bad for recurrence for the rotation by $\sqrt{5}$
on $\T$ (on the other hand the sequences $[\sqrt{5}n+1]$ and $[\sqrt{5}n+3]$
are good for single
recurrence, see the discussion in Section~\ref{SS:single}).

Using Furstenberg's correspondence principle it is easy to see that
the following result implies Theorem~A (in fact it is not hard to show
that they are equivalent):
\begin{theoremA'}
  Let $a\in\H$  satisfy $x^k\prec a(x) \prec x^{k+1}$ for some
  non-negative integer $k$.

  Then $S=\{[a(1)],[a(2)],\ldots\}$ is a set of multiple recurrence.
\end{theoremA'}

The following result implies Theorem~B:
\begin{theoremB'}
Suppose that for every $r,m\in \N$ the set $S\subset \Z$
  contains patterns of the form $$\{r(c_{r,m}+mn)\colon 1\leq
  n\leq N_{r,m}\}$$ where $c_{r,m}, N_{r,m}$ are integers with $\lim_{m\to\infty}N_{r,m}=\infty$
  for every fixed $r\in \N$.

   Then $S$ is a set of multiple recurrence.
\end{theoremB'}
The following result implies Theorem~C:
\begin{theoremC'}
 Let $a\in\H$ have  polynomial growth and suppose that  $|a(x)-cp(x)|\to \infty $ for every $p\in
  \Z[x]$ and
  $c\in \R$.

  Then $S=\{[a(1)],[a(2)],\ldots\}$ is a set of single recurrence.
\end{theoremC'}

\subsection{Structure of the article}
In Section~\ref{S:backgroundergodic} we give the necessary background from ergodic
theory. Key to our study are some results about the structure of the
characteristic factors of some multiple ergodic averages.

In Section~\ref{S:backgroundnil} we give the necessary background on
nilsystems, and state some equidistribution results of sequences on
nilmanifolds. A crucial ingredient for our study is the quantitative equidistribution
result stated in Theorem~\ref{T:GT} for connected groups.  We
generalize this result to not necessarily connected groups in
Theorem~\ref{T:GT2}.

In Section~\ref{S:singlewarmup} we prove Theorem~B' which serves as a
model for the more complicated result that involves Hardy field sequences (Theorem~A').

In Section~\ref{S:polypattern} we carry out the first step needed to
prove Theorem~A'. We show that if $a\in\H$ satisfies $x^k\prec a(x)
\prec x^{k+1}$ for some non-negative integer $k$, then the range of the
sequence $[a(n)]$ some conveniently chosen polynomial patterns.

In Section~\ref{S:mainpart} we work with the patterns found in
Section~\ref{S:polypattern} and  carry out the final two steps of
the proof of Theorem~A'. The first step is a  ``reduction to nilsystems'' argument.
On the second step we verify a multiple recurrence property for nilsystems. Our argument is similar
to the one used to prove our model result Theorem~B'. The only extra difficulty occurs
in the ``reduction to nilsystems" step which happens to be technically much more
involved than the one needed for the our model result.
%% we need to establish an
%%extension of Proposition~\ref{P:seminorm'} that is technically much
%%more involved (Proposition~\ref{P:seminorm2}).

The Appendix  contains the proof of Theorem~C'.

\subsection{Further directions}\label{SS:conjectures}
%%\footnote{After the final version of this article was submitted, several of the problems mentioned on this section have %%been solved by the first author.
%%See arXiv:}
Roughly speaking, the method used to prove Theorem~A, or its equivalent version Theorem~A', amounts to finding
conveniently chosen polynomial pieces within the range of a sequence. These  pieces should be chosen
 so that it is possible to $(i)$ carry out
a reduction to nilsystems step, and $(ii)$ verify a certain recurrence property for nilsystems.
In view of the
 tools that have recently surfaced
 and help us
carry out steps $(i)$ and $(ii)$,
this ``polynomial method" appears to be rather flexible, and is very likely to find further applications.
For example, it  now  looks within reach  to show that every set of integers with positive density contains patterns of the form $m, m+[a_1(n)],\ldots,m+[a_k(n)]$ for ``most" choices of functions $a_i(x)$ that belong to some Hardy field and  have polynomial growth.
  This belief is reinforced by recent extensions in \cite{BH1} of the weakly mixing PET from \cite{Be2}.

 A more challenging problem is to find an example of a Hardy sequence of super-polynomial growth that is ``good'' for  Szemer\'edi's theorem (i.e. the conclusion of Theorem~A holds). For example, is  the sequence $[n^{\log\log{n}}]$
%% $[e^{(\log{n})^{1+a}}]$, where $a>0$ is small,
good for  Szemer\'edi's theorem?
%%for $a<1/3$ (\cite{Ba})).
  Concerning convergence results,  if $a\in\H$ satisfies the growth assumptions of Theorem~A, then it seems likely that the range of the sequence $[a(n)]$ can be split into ``polynomial pieces" that we can control, and hence  prove convergence in $L^2$ for the multiple ergodic averages
$$
\frac{1}{N}\sum_{n=1}^N T^{[a(n)]}f_1\cdot \ldots \cdot T^{\ell[a(n)]}f_\ell.
$$

Since the growth assumptions in Theorem~A can be relaxed when $\ell=1$ (see Theorem~C), it seems very likely that the same should be the case for general $\ell$:
\begin{conjectureA}\label{Con:A}
  Let $a\in \H$ have  polynomial growth and suppose that $|a(x)-cp(x)|\to\infty $ for every $p\in\Z[x]$ and $c\in\R$.

  Then for every $\ell\in\N$, every $\Lambda\subset\mathbb{Z}$ with
  $\bar{d}(\Lambda)>0$ contains arithmetic progressions of the form
  \begin{equation}\label{E:conA}
    \{m, m+[a(n)],m+2[a(n)],\ldots, m+\ell[a(n)]\}
  \end{equation}
  for some $m\in \Z $ and $n\in\N$ with $[a(n)]\neq 0$.
\end{conjectureA}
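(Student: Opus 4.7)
The plan is to mirror the three-step strategy used for Theorem~A' --- extract polynomial patterns in the range of $[a(n)]$, reduce to nilsystems, verify recurrence on nilsystems --- but with a more delicate analysis at each step, since under the weakened hypothesis of Conjecture~A the leading behavior of $a$ may itself be a real multiple of an integer polynomial perturbed by a genuinely lower-order Hardy term. Writing $a(x) = c_k x^k + c_{k-1} x^{k-1} + \cdots + c_0 + b(x)$ with $b \in \H$ of sub-polynomial growth and $b$ not asymptotic to any constant (something the hypothesis $|a(x)-cp(x)|\to\infty$ guarantees after iterating), the sub-leading Hardy term $b$ is the ``non-polynomial witness'' that one must exploit for equidistribution.

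First I would attempt to generalize Section~\ref{S:polypattern}. The Taylor expansion of $a$ no longer has a derivative that vanishes in a manner that cleanly produces arithmetic progressions in the range of $[a(n)]$; instead, I would partition $\N$ into blocks on which $b(n)$ is slowly varying, and on each block extract near-polynomial patterns whose ``drift'' is controlled by $b$. The goal is to replace the patterns of Proposition~\ref{P:polypattern} with families parameterized by an auxiliary (block) variable, to which a suitable generalization of Theorem~B' or Theorem~\ref{T:recurrencepoly} could be applied.

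Second, I would carry out the reduction to nilsystems, adapting the PET-induction and van~der~Corput scheme of Section~\ref{S:mainpart} to the sequence $[a(n)]$ itself. The crucial observation is that after iterated differencing of the form $[a(n+h_1+\cdots+h_s)] - \sum \cdots$, the resulting Hardy functions have strictly smaller growth, so that eventually Proposition~\ref{P:seminorm2} (or a variant valid for this broader class) yields nilfactor control of the averages $\tfrac{1}{N}\sum_{n=1}^N \prod_{j=1}^\ell T^{j[a(n)]}f_j$. Once this is in place, the third step verifies recurrence on nilsystems via Theorem~\ref{T:GT2}, the hypothesis $|a(x)-cp(x)|\to\infty$ supplying, through a Weyl-type computation lifted to nilmanifolds, the equidistribution of $g^{[a(n)]}\Gamma$ on the appropriate sub-nilmanifold.

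The main obstacle, and plausibly the reason Conjecture~A remains open, is the first step. A function such as $a(x)=\sqrt{2}\,x^3+x\log x$ has a leading term $\sqrt{2}\,x^3$ whose fractional parts $\{\sqrt{2}\,n^3\}$ equidistribute but whose integer parts $[\sqrt{2}\,n^3]$ do not visibly contain arithmetic progressions of unbounded length with any fixed common difference, so Theorem~B' does not apply directly. Overcoming this seems to require either a genuinely new pattern-extraction argument --- possibly in the spirit of generalized polynomials, using the slow term $b$ to ``unlock'' patterns obstructed by the irrational leading term --- or else bypassing the pattern step entirely by establishing seminorm and equidistribution control for $[a(n)]$ directly, without routing through Theorem~B'. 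The latter route would likely require a substantial refinement of the PET scheme, since the current induction crucially uses that differences of $[a(n)]$ eventually escape the polynomial range $x^k\prec\cdot\prec x^{k+1}$, a property that fails under the weaker hypotheses of the conjecture.
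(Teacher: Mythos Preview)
The statement you are addressing is labeled \emph{Conjecture~A} in the paper, and the authors explicitly state that they were \emph{not} able to prove it: ``Although we were not able to prove Theorem~A under the more relaxed assumptions of Theorem~C, we believe that the corresponding stronger statement should be true (see Conjecture~A below).'' There is therefore no proof in the paper to compare your proposal against.

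Your write-up is not a proof but a sketch of a possible strategy together with an honest assessment of its obstacles, and in that capacity it is reasonable and well-informed. You correctly identify the functions the paper singles out as problematic (e.g.\ $\sqrt{2}\,x^3 + x\log x$, cf.\ the second remark after Theorem~A), and you correctly locate the difficulty in the pattern-extraction step: under the weaker hypothesis the Taylor-expansion argument of Section~\ref{S:polypattern} breaks down because no derivative of $a$ need vanish at infinity, so the range of $[a(n)]$ need not contain long arithmetic (or polynomial) progressions of the form used in Theorem~\ref{T:recurrencepoly}. Your suggestion to bypass pattern extraction and attack the averages $\tfrac{1}{N}\sum_n \prod_j T^{j[a(n)]}f_j$ directly via a refined PET scheme is a natural alternative, but as you note, iterated differencing of such $[a(n)]$ does not drive the growth down through the hierarchy $x^k\prec\cdot\prec x^{k+1}$ in the way the paper's argument requires; it also introduces bracket-induced error terms that are not obviously controllable. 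These are genuine obstructions, not gaps in your exposition, and they are precisely why the statement remains a conjecture.
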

In view of the fact that Szemer\'edi's theorem on arithmetic progressions was a key ingredient in  showing that the primes contain arbitrarily long arithmetic progressions (\cite{GT1}), and likewise the polynomial Szemer\'edi theorem  was key in
  establishing polynomial progressions in the primes (\cite{TZ}), the following result seems plausible:
\begin{conjectureB}\label{Con:B}
Let $a\in \H$ satisfy the growth assumptions of Theorem~A $($or Conjecture~A$)$.

Then the  prime numbers contain arbitrarily long arithmetic progressions of the form \eqref{E:conA}.
\end{conjectureB}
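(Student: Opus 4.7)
The plan is to follow the transference template established by Green and Tao for arithmetic progressions in the primes and extended by Tao and Ziegler to polynomial progressions, now with Theorem~A (or Conjecture~A) playing the role of Szemer\'edi's theorem. Concretely, for large $N$ I would work with a $W$-tricked normalised indicator $f_W$ of the primes up to $N$, majorised by a pseudorandom measure $\nu$ coming from a Goldston--Pintz--Y\i ld\i r\i m / Green--Tao--Ziegler enveloping sieve, and aim to prove an asymptotic lower bound
\begin{equation*}
\mathbb{E}_{n\in[N]}\,\mathbb{E}_{m\in[M]}\; f_W(n)\, f_W(n+[a(m)])\cdots f_W(n+\ell[a(m)]) \;\geq\; c(a,\ell)>0,
\end{equation*}
with $M=M(N)$ chosen so that $[a(M)]\leq N$. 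Positivity of the left hand side would produce, after un-doing the $W$-trick, the progressions sought by Conjecture~B.

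The reduction to a bounded-function problem would proceed in two standard steps. First, a \emph{relative generalised von Neumann theorem} showing that the $(\ell{+}1)$-linear form above is controlled by some Gowers uniformity seminorm $\norm{f_W}_{U^s}$ (with $s=s(a,\ell)$) whenever each of its arguments is bounded by $\nu+1$. Second, a \emph{dense model / Koopman--von Neumann} step replacing $f_W$ by an honestly $L^\infty$-bounded function $\tilde f$ of the same mean density, for which the corresponding form is positive by a quantitative, finitary version of Theorem~A'. Together with the Green--Tao--Ziegler ``linear forms'' and ``correlation'' conditions verified by the sieve majorant $\nu$, these two inputs close the argument in exactly the way they do for polynomial progressions of bounded degree.

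The genuinely hard step is the relative generalised von Neumann theorem for Hardy-field common differences $[a(m)]$. My strategy would exploit Proposition~\ref{P:polypattern}: its polynomial decomposition represents (a positive-density subset of) the range of $[a(m)]$ as a union of polynomial blocks of bounded degree, so on each block the Hardy configuration is a genuine polynomial configuration, and one can attempt a PET/Cauchy--Schwarz argument in the spirit of Tao--Ziegler to bound the form by $\norm{f_W}_{U^s}$ relative to $\nu$. The difficulty is that PET reductions are far less forgiving in the pseudorandom setting than in the ergodic one: every Cauchy--Schwarz loss must be absorbed by a pseudorandomness hypothesis on $\nu$, whereas the degrees, block lengths, leading coefficients, and the ``linear drifts'' of the polynomial pieces produced by Proposition~\ref{P:polypattern} all vary with the block. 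A satisfactory resolution will therefore require a quantitative, uniform-in-the-block strengthening of Proposition~\ref{P:seminorm2}, coupled with the quantitative equidistribution of polynomial orbits on nilmanifolds from Theorem~\ref{T:GT2}, applied block-wise to prove Gowers-norm control with constants independent of the block data. I expect this uniform relative Gowers-control step to be the principal obstacle; once it is in place, the remainder of the argument is a fairly direct combination of the Green--Tao--Ziegler transference machinery with the finitary form of Theorem~A (respectively Conjecture~A).
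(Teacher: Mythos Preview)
The statement you are attempting to prove is \emph{Conjecture~B}, not a theorem: the paper explicitly presents it as an open problem in Section~\ref{SS:conjectures} and offers no proof. So there is no ``paper's own proof'' to compare against.

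Your proposal is a sensible research outline, and you have honestly flagged its status: it is a strategy, not a proof. The transference template you describe is indeed the natural line of attack, and your identification of the principal obstacle is accurate. The relative generalised von Neumann step is genuinely problematic here for exactly the reason you give: the polynomial blocks produced by Proposition~\ref{P:polypattern} have coefficients and lengths that vary with the block parameter $m$, and the Tao--Ziegler pseudorandom PET machinery is calibrated for a \emph{fixed} polynomial family. Every application of Cauchy--Schwarz in that argument is paid for by a specific linear-forms condition on $\nu$, and the number and shape of those conditions are dictated by the polynomial family; when the family varies over infinitely many blocks you would need either a single pseudorandomness hypothesis strong enough to cover all of them uniformly, or a quantitative control on how the PET tree depends on the block data. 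Neither is available off the shelf. Your hoped-for ``uniform-in-the-block strengthening of Proposition~\ref{P:seminorm2}'' names the right target, but note that Proposition~\ref{P:seminorm2} is an $L^2$ ergodic statement with no quantitative content, so what you actually need is a new finitary result, not a strengthening of an existing one.

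In short: your sketch is a plausible programme, you have located the gap correctly, and that gap is the reason the statement remains a conjecture.
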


\subsection{Notational conventions.} The following notation will be
used throughout the article: $\N=\{1,2,\ldots\}$, $Tf=f\circ T$,
$e(x)=e^{2\pi i x}$, $[x]$ denotes the integer part of $x$,  $\{x\}=x-[x]$, $\norm{x}=d(x,\Z)$,
$o_{m_1,\ldots,m_k}(1)$ denotes a quantity that goes to zero when
$m_1,\ldots,m_k\to +\infty$, by $a(x)\prec b(x)$ we mean
$\lim_{x\to\infty}a(x)/b(x)=0$, when there is no danger of confusion
we write $\infty$ instead of $+\infty$. We use the symbol $\ll$  when
 some expression is  majorized by a constant multiple of some  other expression.
We shall frequently abuse notation and denote the elements $t\Z$ of $\T$, where $t\in \R$,  by $t$.

\bigskip

\noindent {\bf Acknowledgment.} The authors would like to thank
Terence Tao for helpful discussions related to the material of
Section~\ref{S:backgroundnil},  the referee  for helpful comments,
and Pavel Zorin-Kranich for pointing out a mistake on the statement
of Theorems~B and B'.

\section{Background in ergodic theory}\label{S:backgroundergodic}
Below we gather some basic notions and facts from ergodic theory that
we use throughout the paper. The reader can find further background
material in ergodic theory in  \cite{Fu2}, \cite{Pe},
\cite{Wa}.
%% \subsection{Background from ergodic theory}
%% In this appendix ee gather several notions and background results
%% from ergodic theory.
\subsection{Factors in ergodic theory}
%% Throughout the article we consider {\it invertible} measure
%% preserving systems $(X,\mathcal{B},\mu, T)$ %%where the probability space $(X,\mathcal{B},\mu)$ is a {\it Lebesgue space}. This classical assumption %%allows us to use Rokhlin's theory of factors and disintegration.
%% (The basic reference here is \cite{Ro}, see also \cite{Zi} Section
%% 1.1, \cite{Rud} Chapter 2, or %%\cite{W} Section~2.3.) These two extra assumptions are not at all restrictive for our purposes, the %%reason being that the measure preserving systems constructed using the correspondence principle of %%Furstenberg are invertible and Lebesgue.
A {\it homomorphism} from a system $(X,\mathcal{B},\mu, T)$ onto a
system $(Y, \cD, \nu, S)$ is a measurable map $\pi\colon X'\to Y'$,
where $X'$ is a $T$-invariant subset of $X$ and $Y'$ is an
$S$-invariant subset of $Y$, both of full measure, such that
$\mu\circ\pi^{-1} = \nu$ and $S\circ\pi(x) = \pi\circ T(x)$ for $x\in
X'$. When we have such a homomorphism we say that the system $(Y, \cD,
\nu, S)$ is a {\it factor} of the system $(X,\mathcal{B},\mu, T)$.  If
the factor map $\pi\colon X'\to Y'$ can be chosen to be injective,
then we say that the systems $(X,\cB, \mu, T)$ and $(Y, \cD, \nu, S)$
are {\it isomorphic} (bijective maps on Lebesgue spaces have
measurable inverses).

%% A {\it factor} of a system $(X,\mathcal{B},\mu)$ is a (class modulo
%% isomorphism equivalence relation of) system $(Y, \cD, \nu, S)$ such
%% that there exists a homomorphism $\pi$ from the first system onto
%% the second one.

A factor can be characterized (modulo isomorphism) by the data
$\pi^{-1}(\mathcal{D})$ which is a $T$-invariant sub-$\sigma$-algebra
of $\mathcal B$, and any $T$-invariant sub-$\sigma$-algebra of
$\mathcal B$ defines a factor; by a classical abuse of terminology we
denote by the same letter the $\sigma$-algebra $\mathcal{D}$ and its
inverse image by $\pi$. In other words, if $(Y, \cD, \nu, S)$ is a
factor of $(X,\mathcal{B},\mu,T)$, we think of $\cD$ as a
sub-$\sigma$-algebra of $\mathcal{B}$. A factor can also be
characterized (modulo isomorphism) by a $T$-invariant subalgebra
$\mathcal{F}$ of $L^\infty(X,\mathcal{B},\mu)$, in which case $\cD$ is
the sub-$\sigma$-algebra generated by $\mathcal{F}$, or equivalently,
$L^2(X,\cD,\mu)$ is the closure of $\mathcal{F}$ in
$L^2(X,\mathcal{B},\mu)$. We shall sometimes abuse notation and use the sub-$\sigma$-algebra $\mathcal{D}$ in place of
 the subspace $L^2(X,\cD,\mu)$. For example, if we write that a function is orthogonal to the factor $\mathcal{D}$,
  we mean it  is orthogonal to the subspace $L^2(X,\cD,\mu)$.

If $\cD$ is a $T$-invariant sub-$\sigma$-algebra of $\cB$ and $f\in
L^2(\mu)$, we define the {\it conditional expectation
  $\mathbb{E}(f|\cD)$ of $f$ with respect to $\cD$} to be the
orthogonal projection of $f$ onto $L^2(\cD)$. We frequently make use
of the identities
$$
\int \mathbb{E}(f|\cD) \ d\mu= \int f\ d\mu, \quad
T\,\mathbb{E}(f|\cD)=\mathbb{E}(Tf|\cD).
$$
%% (If we want to indicate the dependence on the reference measure, we
%% write %%$\mathbb{E}=\mathbb{E}_\mu$.)

%% For each $d\in\mathbb{N}$, we define $\mathcal{K}_d$ to be the
%% factor induced by the function algebra
%%$$\{f\in L^\infty(\mu):T^df=f\}.$$
%%We define the {\it rational Kronecker factor} $\mathcal{K}_{rat}$ to
%% be the factor induced by the algebra generated by the functions
%%$$
%%\{f\in L^\infty(\mu):T^df=f \text{ for some } d\in\mathbb{N}\}\ .
%%$$
%%This algebra is the same as the algebra spanned by the bounded
%% functions that satisfy $Tf=e(a)\cdot f$ for some $a\in \Q$.

%% The {\it Kronecker factor} $\mathcal{K}$ is induced by the algebra
%% spanned by the bounded eigenfunctions of $T$, that means, functions
%% that satisfy $Tf=e(a)\cdot f$ for some $a\in \R$.

%% It is known that if $f$ is a bounded function such that
%% $\mathbb{E}_\mu(f|\mathcal{K}(T))=0$, then %%$\mathbb{E}_{\mu\otimes\mu}(f\otimes f|\mathcal{K}_{rat}(T\times T))=0$ (see for example \cite{Fu2}, %%Section 4.4).

The transformation $T$ is {\it ergodic} if $Tf=f$ implies that $f=c$
(a.e.) for some $c\in \mathbb{C}$, and \emph{totally ergodic} if
$T^rf=f$ for some $r\in\N$ implies that $f=c$ (a.e.) for some $c\in
\mathbb{C}$.

Every system $(X,\cB,\mu,T)$ has an {\it ergodic decomposition},
meaning that we can write $\mu=\int \mu_t\ d\lambda(t)$, where
$\lambda$ is a probability measure on $[0,1]$ and $\mu_t$ are
$T$-invariant probability measures on $(X,\cB)$ such that the systems
$(X,\cB,\mu_t,T)$ are ergodic for $t\in [0,1]$. We sometimes denote
the ergodic components by $T_t, t\in [0,1]$.

We say that $(X,\cB,\mu,T)$ is an {\it inverse limit of a sequence of
  factors} $(X,\cB_j,\mu,T)$ if $(\cB_j)_{j\in\mathbb{N}}$ is an
increasing sequence of $T$-invariant sub-$\sigma$-algebras such that
$\bigvee_{j\in\N}\mathcal{B}_j=\mathcal{B}$ up to sets of measure
zero.

\subsection{Characteristic factors for polynomial averages}
Following \cite{HK1}, for every system $(X,\mathcal{B},\mu,T)$ and
function $f\in L^\infty(\mu)$, we define inductively the
 seminorms $\nnorm{f}_\ell$ as follows: For $\ell=1$ we set
$\nnorm{f}_1=\int |\E(f|\mathcal{I})|d\mu$, where $\mathcal{I}$ is the
$\sigma$-algebra of $T$-invariant sets. For $\ell\geq 2$ we set
\begin{equation}
  \label{eq:recur} \nnorm f_{\ell+1}^{2^{\ell+1}} =\lim_{N\to\infty}\frac
  1N\sum_{n=1}^N \nnorm{\bar{f}\cdot T^nf}_{\ell}^{2^{\ell}}.
\end{equation}
It was shown in~\cite{HK1} that for every integer
$\ell\geq 1$, $\nnorm\cdot_\ell$ is a seminorm on $L^\infty(\mu)$ and
it defines factors $\cZ_{\ell-1}=\cZ_{\ell-1}(T)$ in the following
manner: the $T$-invariant sub-$\sigma$-algebra $\cZ_{\ell-1}$ is
characterized by
%% for $f\in L^\infty(\mu)$ define the $\sigma$-algebra $\cZ_{k-1}(X)$
%% of $\cX$ by
$$
\text{ for } f\in L^\infty(\mu),\ \E(f|\cZ_{\ell-1})=0\text{ if and
  only if } \nnorm f_{\ell} = 0.~\footnote{In \cite{HK1} the authors work
  with ergodic systems, in which case $\nnorm{f}_1=\left|\int f \
    d\mu\right|$, and real valued functions, but the whole discussion
  can be carried out for non-ergodic systems as well and complex
  valued functions without extra difficulties.}
$$
We call $\cZ_\ell$ the \emph{$\ell$-step nilfactor} of the system.
 By $\cZ$ we denote the smallest factor that is an extension of all the factors
$\cZ_\ell$ for $\ell\in\N$, and we call $\cZ$ the \emph{nilfactor} of
the system.  If $f$ is a bounded function that satisfies
$\E_\mu(f|\cZ_\ell(T))=0$, then $\E_{\mu\otimes\mu}(f\otimes
\overline{f}|\cZ_{\ell-1}(T\times T))=0$ (this is implicit in \cite{HK1}). Also,
if $T_t$ where $t\in [0,1]$ are the ergodic components of the system,
then $\E(f|\mathcal{Z}_{\ell}(T))=0$ if and only if
$\E(f|\mathcal{Z}_\ell(T_t))=0$ for a.e. $t\in[0,1]$.

%% We note that for ergodic systems the factor $\cZ_0=\mathcal{I}$ is
%% trivial and $\cZ_1=\mathcal{K}$.
The factors $\cZ_\ell$ are of particular interest because they control
the limiting behavior in $L^2$ of several multiple
ergodic averages. The next result makes this more precise.
\begin{theorem}[{\bf Leibman~\cite{L3}}]\label{T:L2}
  Let $p_1,\ldots,p_s\colon \Z^r\to \Z$ be a family of non-constant essentially
  distinct polynomials.

Then   there exists a non-negative integer $\ell=\ell(p_1,p_2,\ldots,p_s)$
  with the following property: If $(X,\cB,\mu,T)$ is a system and at least
  one of the functions
  $f_1,\ldots,f_s\in L^\infty(X)$ is orthogonal to the factor $\mathcal{Z}_\ell(T)$,
   then for every F{\o}lner sequence
  $(\Phi_N)_{N\in\N}$ in $\Z^r$ we have
$$
\lim_{N\to\infty}\norm{\frac{1}{|\Phi_N|}\sum_{n\in \Phi_N}
T^{p_1(n)}f_1\cdot T^{p_2(n)}f_2\cdot\ldots\cdot T^{p_s(n)}f_s}_{L^2(\mu)}=0.
$$
\end{theorem}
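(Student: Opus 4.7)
The plan is to use Bergelson's PET (Polynomial Exhaustion Technique) induction combined with repeated applications of the van der Corput lemma, following the strategy developed by Bergelson and Leibman in the ergodic approach to the polynomial Szemer\'edi theorem. I would induct on an ordinal-valued \emph{weight} $w(\mathcal{P})$ attached to the polynomial family $\mathcal{P}=\{p_1,\ldots,p_s\}$, designed so that $w$ takes values in a well-ordered set and strictly decreases under the differencing-and-subtracting operation produced by one application of van der Corput.

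For the base of the induction, the case of a single non-constant polynomial $p(n)$ of degree $d$ follows from $d$ direct applications of van der Corput combined with the recursive definition~\eqref{eq:recur}, giving
$$
\lim_{N\to\infty}\Bigl\|\frac{1}{|\Phi_N|}\sum_{n\in\Phi_N}T^{p(n)}f\Bigr\|_{L^2(\mu)}\ll \nnorm{f}_{d+1},
$$
so $\ell=d$ works. For the inductive step, set $u_n=\prod_{i=1}^s T^{p_i(n)}f_i$ and apply van der Corput along a single coordinate direction $\mathbf{e}$ of $\Z^r$:
$$
\limsup_N\Bigl\|\frac{1}{|\Phi_N|}\sum_{n\in\Phi_N}u_n\Bigr\|_{L^2(\mu)}^2 \ll \limsup_H\frac{1}{H}\sum_{h=1}^H\limsup_N\Bigl|\frac{1}{|\Phi_N|}\sum_{n\in\Phi_N}\langle u_{n+h\mathbf{e}},u_n\rangle\Bigr|.
$$
Expanding $\langle u_{n+h\mathbf{e}},u_n\rangle$, composing with $T^{-p_{i_0}(n)}$ for a reference index $i_0$ chosen according to the PET recipe (the polynomial of maximal weight), and using $T$-invariance of $\mu$, the right-hand side becomes an average of integrals of the form $\int\prod_{j=1}^{2s-1}T^{q_{j,h}(n)}g_j\,d\mu$, where the new family $\mathcal{P}'_h$ of $2s-1$ polynomials consists of the differences $p_i(n+h\mathbf{e})-p_{i_0}(n)$ and $p_i(n)-p_{i_0}(n)$, and the functions $g_j$ lie in $\{f_i,\bar f_i\}$. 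The key combinatorial lemma of PET shows that for all but finitely many $h$, the family $\mathcal{P}'_h$ is essentially distinct, non-constant, and satisfies $w(\mathcal{P}'_h)<w(\mathcal{P})$. By the inductive hypothesis there is $\ell'=\ell'(\mathcal{P})$ such that the inner average vanishes in the limit whenever one of the $g_j$ is orthogonal to $\cZ_{\ell'}$; averaging over $h$ and invoking~\eqref{eq:recur} then controls the whole expression by $\nnorm{f_i}_{\ell'+1}$ for the appropriate $i$, yielding the required constant $\ell=\ell'+1$.

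The main obstacle is combinatorial bookkeeping: one must set up the weight $w$ so that it is well-founded and genuinely decreases under the multidimensional PET operation above, which is delicate because of the flexibility in choosing the VDC direction $\mathbf{e}$ and the reference index $i_0$, and because one needs to rule out degenerate cases where the new family collapses. A secondary technical point is that the theorem is stated in the non-ergodic setting; this is handled by working directly with the seminorms $\nnorm\cdot_\ell$, whose recursive definition~\eqref{eq:recur} and compatibility with van der Corput remain valid without ergodicity, or equivalently by ergodic decomposition together with the fact noted in the text that $\E(f|\cZ_\ell(T))=0$ iff $\E(f|\cZ_\ell(T_t))=0$ for a.e.\ $t$.
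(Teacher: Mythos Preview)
The paper does not give its own proof of this statement: Theorem~\ref{T:L2} is quoted as a result of Leibman~\cite{L3} and used as a black box (together with Theorem~\ref{T:HoKra}) throughout the article. So there is no in-paper proof to compare your proposal against.

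That said, your outline is the standard route to results of this kind and is essentially the strategy carried out in~\cite{L3} (building on~\cite{Be2} and~\cite{HK2}): attach a well-ordered weight to the polynomial family, apply the van der Corput inequality, subtract a suitably chosen reference polynomial to lower the weight, and close the induction via the recursive definition~\eqref{eq:recur} of the seminorms. Your identification of the two delicate points---setting up the PET weight so that it genuinely decreases in the multidimensional F{\o}lner setting, and the non-ergodic formulation---is accurate. One small clarification: in your inductive step you write that the inner averages vanish ``whenever one of the $g_j$ is orthogonal to $\cZ_{\ell'}$'' and then invoke~\eqref{eq:recur} to bump $\ell'$ to $\ell'+1$. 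In fact no bump is needed at that stage: the key observation is that after one van der Corput/subtraction step the distinguished function $f_i$ (or its conjugate) still appears \emph{as a standalone factor} in the new product, so the inductive hypothesis applies directly with the same $f_i\perp\cZ_{\ell'}$. The seminorm increment you describe is what happens in the base case and in the linear regime, not at every PET step; otherwise the final $\ell$ would depend on the length of the PET descent rather than only on the family $\mathcal{P}$ as stated.
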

We say that $\mathcal{Z}_\ell(T)$ is a {\it characteristic factor}
associated to the family $p_1,p_2,\ldots,p_s$ when this last fact is true.

We shall also use the following easy corollary of the previous result:
\begin{corollary}\label{C:L2'}
 Let $(X,\cB,\mu,T)$ be a system,  $p_1,\ldots,p_s\colon \Z^r\to \Z$ be a family of
 non-constant essentially distinct polynomials,
  and
  let $\mathcal{Z}_\ell(T)$ be a characteristic factor for this
  family.  Suppose that at least one of the functions $f_0,f_1,\ldots,f_s\in
  L^\infty(X)$  is orthogonal to the
  factor $\mathcal{Z}_{\ell+1}(T)$.

   Then for every F{\o}lner sequence
  $(\Phi_N)_{N\in\N}$ in $\Z^r$ we have
$$
\lim_{N\to\infty}\frac{1}{|\Phi_N|}\sum_{n\in \Phi_N}\left|\int
  f_0\cdot T^{p_1(n)}f_1\cdot\ldots\cdot T^{p_s(n)}f_s \
  d\mu\right|^2=0.
$$
\end{corollary}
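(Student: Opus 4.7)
The plan is to reduce everything to Theorem~\ref{T:L2} applied to the product system $T\times T$, via the standard square--and--Fubini trick. Expanding the square and using that $\mu$ is $T$-invariant,
$$
\left|\int f_0\prod_{j=1}^s T^{p_j(n)}f_j\,d\mu\right|^2
=\int\!\!\!\int (f_0\otimes \bar f_0)\cdot\prod_{j=1}^s (T\times T)^{p_j(n)}(f_j\otimes \bar f_j)\,d(\mu\otimes\mu).
$$
Averaging over $\Phi_N$, the claim becomes
$$
\lim_{N\to\infty}\int\!\!\!\int (f_0\otimes \bar f_0)\cdot A_N\,d(\mu\otimes\mu)=0,\qquad A_N:=\frac{1}{|\Phi_N|}\sum_{n\in\Phi_N}\prod_{j=1}^{s}(T\times T)^{p_j(n)}(f_j\otimes \bar f_j).
$$
Note that the characteristic level in Theorem~\ref{T:L2} depends only on the family $p_1,\dots,p_s$, so $\mathcal{Z}_\ell(T\times T)$ is also characteristic for the same family on the product system.

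I then split into two cases. First, suppose $f_{j_0}$ with $j_0\geq 1$ satisfies $\E(f_{j_0}\,|\,\mathcal{Z}_{\ell+1}(T))=0$. By the implication from \cite{HK1} quoted just before Theorem~\ref{T:L2} (applied with $\ell+1$ in place of $\ell$), we get $\E_{\mu\otimes\mu}(f_{j_0}\otimes \bar f_{j_0}\,|\,\mathcal{Z}_\ell(T\times T))=0$. Theorem~\ref{T:L2} applied to $T\times T$ then yields $\|A_N\|_{L^2(\mu\otimes\mu)}\to 0$, and Cauchy--Schwarz against the bounded function $f_0\otimes\bar f_0$ finishes the argument.

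Second, suppose instead that $\E(f_0\,|\,\mathcal{Z}_{\ell+1}(T))=0$, so $f_0\otimes\bar f_0\perp L^2(\mathcal{Z}_\ell(T\times T))$. Set $g_j:=\E(f_j\otimes \bar f_j\,|\,\mathcal{Z}_\ell(T\times T))$ and let $B_N$ be the average defined as $A_N$ but with each $f_j\otimes \bar f_j$ replaced by $g_j$. Since $\mathcal{Z}_\ell(T\times T)$ is characteristic for $p_1,\dots,p_s$ on $T\times T$, Theorem~\ref{T:L2} gives $\|A_N-B_N\|_{L^2(\mu\otimes\mu)}\to 0$. But $B_N$ is $\mathcal{Z}_\ell(T\times T)$-measurable, so $\int\!\!\int (f_0\otimes\bar f_0)\,B_N\,d(\mu\otimes\mu)=0$ for every $N$, and a further Cauchy--Schwarz applied to the error term $A_N-B_N$ gives the desired conclusion.

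The only step that requires any care is the passage to the product system: one must invoke the fact (implicit in~\cite{HK1}) that orthogonality to $\mathcal{Z}_{\ell+1}(T)$ forces $f\otimes\bar f$ to be orthogonal to $\mathcal{Z}_\ell(T\times T)$, which is exactly why the hypothesis is stated with $\ell+1$ rather than $\ell$. Everything else is Fubini and Cauchy--Schwarz.
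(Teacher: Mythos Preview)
Your argument is correct and follows the same route as the paper: expand the square, pass to the product system $(X\times X,\mu\otimes\mu,T\times T)$, use that $f_i\perp\mathcal{Z}_{\ell+1}(T)$ forces $f_i\otimes\bar f_i\perp\mathcal{Z}_\ell(T\times T)$, and then invoke Theorem~\ref{T:L2} on the product. The paper's proof does exactly this in two sentences, without distinguishing the cases $i\ge 1$ and $i=0$; your Case~2 (projecting the shifted functions onto $\mathcal{Z}_\ell(T\times T)$ and using that the projected average is $\mathcal{Z}_\ell$-measurable, hence orthogonal to $f_0\otimes\bar f_0$) is the honest way to justify the $i=0$ case that the paper leaves implicit.
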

\begin{proof}
  If $f_i$ is orthogonal to the factor $\mathcal{Z}_{\ell+1}(T)$, then
as mentioned above, the function   $f_i\otimes \overline{f}_i$ is orthogonal to the factor
  $\mathcal{Z}_{\ell}(T\times T)$. Therefore, by Theorem~\ref{T:L2} the averages
$$
\frac{1}{|\Phi_N|}\sum_{n\in \Phi_N}\int \int f_0(x) \cdot
\bar{f_0}(y)\cdot f_1(T^{p_1(n)}x)\cdot \bar{f_1}(T^{p_1(n)}y)\cdot
\ldots \cdot f_s(T^{p_s(n)}x)\cdot \bar{f_s}(T^{p_s(n)}y)\
d\mu(x)d\mu(y)
$$
converge to zero. This immediately implies the advertised result.
\end{proof}
Host and Kra (\cite{HK1}) showed that the factors $\mathcal{Z}_\ell$
are of purely algebraic structure (a closely related result was
subsequently proved by Ziegler (\cite{Z})), a result that is crucial
for our study.
\begin{theorem}[{\bf Host \& Kra~\cite{HK1}}]\label{T:HoKra}
  Let $(X,\mathcal{B},\mu,T)$ be a system and $\ell\in \N$.

  Then  a.e. ergodic
   component of the  factor $\mathcal{Z}_\ell(T)$ is an inverse limit of $\ell$-step
  nilsystems.
\end{theorem}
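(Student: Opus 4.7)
The plan is to follow the Host--Kra construction via cubic measures, proceeding by induction on $\ell$. First, using the ergodic decomposition and the fact (noted in the excerpt) that $\E(f|\cZ_\ell(T))=0$ iff $\E(f|\cZ_\ell(T_t))=0$ a.e., reduce to the case where $(X,\cB,\mu,T)$ is itself ergodic.

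Next, I would define cubic measures $\mu^{[\ell]}$ on $X^{2^\ell}$ inductively: set $\mu^{[0]}=\mu$, and let $\mu^{[\ell+1]}$ be the relatively independent self-joining of $\mu^{[\ell]}$ over the $\sigma$-algebra $\mathcal{I}^{[\ell]}$ of sets invariant under the diagonal transformation $T^{[\ell]}=T\times\cdots\times T$ acting on $X^{2^\ell}$. The key analytic identity to verify is
$$
\nnorm{f}_\ell^{2^\ell}=\int\prod_{\epsilon\in\{0,1\}^\ell}\mathcal{C}^{|\epsilon|}f(x_\epsilon)\,d\mu^{[\ell]}(x),
$$
where $\mathcal{C}$ denotes complex conjugation and $|\epsilon|=\epsilon_1+\cdots+\epsilon_\ell$. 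This identity then translates the seminorm characterization of $\cZ_{\ell-1}$ into a structural property of the cubic measures, which is what makes algebraic information accessible.

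The heart of the argument is the induction on $\ell$. The base case ($\ell=1$) identifies $\cZ_1$ with the Kronecker factor, which is a compact group rotation and hence a $1$-step nilsystem. For the inductive step, assuming $\cZ_{\ell-1}$ is an inverse limit of $(\ell-1)$-step nilsystems, I would establish that $\cZ_\ell$ is an \emph{isometric extension} of $\cZ_{\ell-1}$ by a compact abelian group $U$: any bounded function in $\cZ_\ell$ orthogonal to $\cZ_{\ell-1}$ decomposes into eigencomponents for characters of $U$, and the extension is then described by a cocycle $\rho\colon\cZ_{\ell-1}\to U$. To show the extension sits inside an inverse limit of $\ell$-step nilsystems, I would exploit the invariance of $\mu^{[\ell+1]}$ under a large group of ``side transformations'' (the parallelepiped symmetries), together with the action on $X$ of the structure group of $\cZ_{\ell-1}$, to verify a functional equation for $\rho$ of Conze--Lesigne type.

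The main obstacle is precisely this last step: showing the cocycle $\rho$ is of \emph{nilpotent type}, meaning that for each $t$ in the structure group of $\cZ_{\ell-1}$, the derivative $\Delta_t\rho:=(\rho\circ S_t)\cdot\rho^{-1}$ is a coboundary whose transfer function satisfies a further compatibility relation of one lower complexity. Iterating this and piecing together the transfer functions is what realizes $\cZ_\ell$ as a (possibly infinite-dimensional) homogeneous space of an $\ell$-step nilpotent Lie group modulo a discrete subgroup, at which point approximating by finite-dimensional sub-nilmanifolds produces the inverse limit. This Abramov/Conze--Lesigne-style cocycle analysis in the nilpotent setting is the technical core, and it is where the delicate use of the parallelepiped measures $\mu^{[\ell+1]}$ cannot be bypassed.
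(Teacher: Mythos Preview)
The paper does not prove this theorem at all: it is stated with attribution to Host and Kra \cite{HK1} and used as a black box, with no argument supplied. So there is no ``paper's own proof'' against which to compare your proposal.

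That said, your sketch is a fair high-level outline of the original Host--Kra argument: the inductive construction of the cubic measures $\mu^{[\ell]}$, the identification of $\cZ_\ell$ as an abelian isometric extension of $\cZ_{\ell-1}$, and the Conze--Lesigne--type cocycle analysis showing the extension is of nilpotent type. If your intent was to reproduce that proof, be aware that the passage from ``cocycle of nilpotent type'' to ``inverse limit of $\ell$-step nilsystems'' is itself a substantial piece of work (occupying several sections of \cite{HK1}), and your last paragraph compresses it considerably; but as a roadmap it is accurate. For the purposes of the present paper, however, no proof is expected---the result is simply quoted.
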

This result  justifies our name for the factors $\mathcal{Z}_\ell(T)$.
%%Because of this result we call $\cZ_\ell$ the
%%\emph{$\ell$-step nilfactor} of the system. The smallest factor that is an extension of
%%all finite step nilfactors is denoted by $\cZ$ and called the \emph{nilfactor} of
%%the system.

\section{Equidistribution results on
  nilmanifolds}\label{S:backgroundnil}
%% \subsection{Background from ergodic theory}
In this section we give some background material on nilsystems and
gather some equidistribution results of polynomial sequences on
nilmanifolds that will be used later.  Nilsystems play a central role
in our study because they provide a sufficient class for verifying
several multiple recurrence results for general measure preserving
systems. In fact, when one deals with ``polynomial recurrence"
  this is usually a consequence of Theorems~\ref{T:L2} and
  \ref{T:HoKra}. These two results, taken together, show that nilsystems
  control the limiting behavior of the corresponding  polynomial
  multiple ergodic averages.
\subsection{Nilmanifolds, definition and basic
  properties}\label{SS:basic}
The reader can find fundamental properties of nilsystems related to
our study in \cite{AGH}, \cite{Pa}, \cite{Les}, and \cite{L2}.
%% Below we summarize some facts that we shall use, all the proofs can
%% be found in \cite{L2}.

Given a topological group $G$, we denote the identity element by $e$,
and we let $G_0$ denote the connected component of $e$.  If $A,
B\subset G$, then $[A,B]$ is defined to be the subgroup generated by
elements of the form $\{[a,b]:a\in A, b\in B\}$ where $[a,b]=ab
a^{-1}b^{-1}$. We define the commutator subgroups recursively by
$G_1=G$ and $G_{k+1}=[G, G_{k}]$. A group $G$ is said to be {\it
  $k$-step nilpotent} if its $(k+1)$ commutator $G_{k+1}$ is trivial.
If $G$ is a $k$-step nilpotent Lie group and $\gG$ is a discrete
cocompact subgroup, then the compact space $X = G/\gG$ is said to be a
{\it $k$-step nilmanifold}.  The group $G$ acts on $G/\gG$ by left
translation where the translation by a fixed element $a\in G$ is given
by $T_{a}(g\gG) = (ag) \gG$. By $m_X$ we  denote the unique probability
measure on $X$ that is invariant under the action of $G$ by left
translations (called the {\it normalized Haar measure}) and $\G/\gG$ denote the
Borel $\sigma$-algebra of $G/\gG$. Fixing an element $a\in G$, we call
the system $(G/\gG, \G/\gG, m, T_{a})$ a {\it $k$-step nilsystem}. We
call the elements of $G$ \emph{nilrotations}.

Given a nilmanifold $X=G/\Gamma$, an {\it ergodic
  nilrotation} is an element $a\in G$ such that the sequence
$(a^n\Gamma)_{n\in\N}$ is uniformly distributed on $X$. If $X$ is a connected nilmanifold and
$a\in G$ is an ergodic
nilrotation it can be shown that  for every $d\in \N$ the nilrotation
$a^d$ is also ergodic.

\begin{example}\label{E:1}
  On the space $G=\Z\times\R^2$,
  define multiplication as follows: \\
  if $g_1=(m,x_1,x_2)$ and $g_2=(n,y_1,y_2)$, then
$$
g_1\cdot g_2=(m+n,x_1+y_1, x_2+y_2+my_1).
$$
Then $G$ with $\cdot$ is a $2$-step nilpotent Lie group and the group
$G_0=\{0\}\times \R^2$ is Abelian. The discrete subgroup $\Gamma=\Z^3$
is cocompact and $X=G/\Gamma$ is connected. It can be shown that the
nilrotation $a=(1,\alpha,\beta)$ is ergodic if and only if $\alpha$
is an irrational number.
\end{example}

We remark that the representation of a nilmanifold $X$ as a homogeneous
space of a nilpotent Lie group $G$ is not unique. If $X$ is a
connected nilmanifold, it can be shown (\cite{L2}) that it admits a
representation of the form $X=G/\Gamma$ such that:
%%\begin{equation}\label{E:assumptions}
$G_0$ is simply connected and  $G=G_0\Gamma$.
%%\end{equation}
%%\centerline{{\it $G$ is simply connected (but not necessarily
%%    connected) and satisfies $G=G_0\Gamma$.}}
In the sequel, whenever
$X$ is connected, {\it we will always assume that $G$ satisfies these two
extra assumptions}.

\subsection{Qualitative equidistribution results on nilmanifolds}
If $G$ is a nilpotent group, then a sequence $g\colon \Z\to G$ of the
form $g(n)=a_1^{p_1(n)}\cdot\ldots\cdot a_k^{p_k(n)}$ where $a_i\in G$
and $p_i$ are polynomials taking integer values at the integers is
called a \emph{polynomial sequence in} $G$. If the maximum of the
degrees of the polynomials $p_i$ is at most $d$ we say that the
\emph{degree} of $g(n)$ is at most $d$.
%%\footnote{One should not
%%  confuse this notion of degree with the one defined in \cite{L1}
%%  which is completely different.}
A \emph{polynomial sequence on the
  nilmanifold} $X=G/\Gamma$ is a sequence of the form
$(g(n)\Gamma)_{n\in\Z}$ where $g\colon \Z\to G$ is a polynomial
sequence in $G$.

\begin{theorem}[{\bf Leibman~\cite{L2}}]\label{T:L}
  Suppose that $X = G/\gG$ is a connected nilmanifold and $g(n)$ is a
  polynomial sequence in $G$. Let $Z=G/([G_0,G_0]\Gamma)$ and
  $\pi\colon X\to Z$ be the natural projection.

  Then  for every   $x\in X$ the
  sequence $(g(n)x)_{n\in\mathbb{N}}$ is equidistributed in $X$ if and
  only if the sequence  $(g(n)\pi(x))_{n\in\mathbb{N}}$ is equidistributed in $Z$.
\end{theorem}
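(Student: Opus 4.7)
The plan is as follows. The ``only if'' direction is automatic: $\pi\colon X\to Z$ is continuous and $\pi_\ast m_X = m_Z$, so if $(g(n)x)_{n\in\N}$ equidistributes in $X$, pushing measures forward gives equidistribution of $(g(n)\pi(x))_{n\in\N}$ in $Z$. The entire content is in the converse.

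For the ``if'' direction I would verify Weyl's criterion: fix $F\in C(X)$ and aim to prove
$$
\frac{1}{N}\sum_{n=1}^N F(g(n)x)\longrightarrow \int_X F\,dm_X.
$$
Orthogonally decompose $L^2(X,m_X) = \pi^\ast L^2(Z,m_Z)\oplus V$. When $F$ is the pullback of a function on $Z$, the desired convergence is precisely the hypothesis applied on $Z$, so the task reduces to showing that the averages tend to $0$ whenever $F\in V$, i.e.\ whenever $F$ is orthogonal to every function lifted from $Z$.

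For $F\in V$ I would argue by induction on the nilpotency class $k$ of $G$, using van der Corput to step $k$ down. The base case $k=1$ is trivial: then $[G_0,G_0]=\{e\}$, so $Z=X$ and $V=\{0\}$. For the inductive step, set $a_n = F(g(n)x)$. Van der Corput bounds $\bigl|\frac{1}{N}\sum_{n=1}^N a_n\bigr|^2$ by an average over $h$ of $\bigl|\frac{1}{N}\sum_{n=1}^N a_{n+h}\overline{a_n}\bigr|$. The product $a_{n+h}\overline{a_n}$ equals $(F\otimes \overline F)\bigl((g(n+h),g(n))(x,x)\bigr)$, which is a polynomial sequence in $G\times G$ evaluated on the diagonal of $X\times X$. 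Using standard commutator expansions in the nilpotent group $G$, one rewrites $(g(n+h),g(n))$ inside a suitable quotient so that its ``difference part'' lies in a strictly deeper stage of the lower central series. This effectively realises the sequence on a nilmanifold of smaller nilpotency class, where the inductive hypothesis applies and forces the inner averages to vanish.

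The main obstacle is the algebraic bookkeeping of the van der Corput step. One must check that the reduced sequence is a genuine polynomial sequence on an honest sub-nilmanifold of $X\times X$, that the filtration degree strictly decreases so the induction terminates, and — most delicately — that the projected-torus equidistribution assumption on $Z$ either persists or is replaced by a usable equidistribution assumption on the torus factor of the new nilmanifold produced at each step. This is exactly the content of Leibman's formalism of polynomial mappings into filtered nilpotent groups, together with the orbit-closure structure theorem for polynomial sequences on nilmanifolds. Once the induction descends to the abelian base case, classical Weyl equidistribution on the torus $Z$ combined with the standing hypothesis closes the argument.
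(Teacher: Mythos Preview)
The paper does not prove this theorem at all: Theorem~\ref{T:L} is quoted from Leibman~\cite{L2} as a background result and used as a black box, so there is no ``paper's own proof'' to compare your attempt against.

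As for your sketch on its own merits, the overall architecture (Weyl criterion, split off the $Z$-measurable part, van der Corput on the orthogonal complement) is the standard shape for results of this type, but the inductive parameter you name is not the right one. Applying van der Corput to $F(g(n+h)x)\overline{F(g(n)x)}$ lands you on a polynomial sequence in $G\times G$ acting on a sub-nilmanifold of $X\times X$; the ambient nilpotency class does not drop, and in general there is no reason the new nilmanifold has strictly smaller step. What does drop is a filtration-based complexity (the degree of the polynomial sequence relative to a Mal'cev/lower-central filtration), which is exactly the machinery from \cite{L1,L2} that you invoke at the end. But once you appeal to Leibman's structure theorem for polynomial orbits to make the induction go through, you are essentially assuming the theorem you are trying to prove: in \cite{L2} the statement of Theorem~\ref{T:L} is obtained as a direct consequence of that orbit-closure/equidistribution structure theory, not the other way around. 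So your outline is circular as written; to turn it into an honest argument you would need to replace the vague ``nilpotency class decreases'' claim by a precise complexity that provably drops under the van der Corput step, and then carry out the base case on the affine torus without already using Leibman's result.
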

Note that $[G_0,G_0]$ is a normal subgroup of $G$ and so $G/[G_0,G_0]$
is a group.

\subsection{Quantitative equidistribution results on nilmanifolds}
\subsubsection{The case of a connected group} We will later use a
quantitative version of Theorem~\ref{T:L} that was recently obtained
by Green and Tao in \cite{GT}. In order to state it we need to review
some notions that were introduced in \cite{GT}.

Given a nilmanifold $X=G/\Gamma$, the {\em horizontal torus} is
defined to be the compact Abelian group $H=G/([G,G]\Gamma)$.  If $X$
is connected, then $H$ is isomorphic to some finite dimensional torus
$\T^l$. By $\pi\colon X\to H$ we denote the natural projection map.  A
\emph{horizontal character} is a continuous homomorphism $\chi$ of $G$ that
satisfies $\chi(g\gamma)=\chi(g)$ for every $\gamma\in\Gamma$. Since
every character annihilates $[G,G]$, every horizontal character factors
through $H$, and so can be thought of as a character of the horizontal
torus. Since $H$ is identifiable with a finite dimensional torus
$\T^l$ (we assume that $X$ is connected), $\chi$ can also be thought
of as a character of $\T^l$, in which case there exists a unique
$\kappa\in\Z^l$ such that $\chi(t)=\kappa\cdot t$, where $\cdot$
denotes the inner product operation.  We refer to $\kappa$ as the
frequency of $\chi$ and $\norm{\chi}=|\kappa|$ as the \emph{frequency
  magnitude} of $\chi$.

\begin{example}
  Let $X$ be as in Example $1$.  The map $\chi(m,x_1,x_2)=e(lx_1)$,
  where $l\in\Z$, is a horizontal character of $G$ and the map
  $\phi(m,x_1,x_2)=x_1 \pmod{1}$ induces an identification of the
  horizontal torus with $\T$.  Under this identification, $\chi$ is
  mapped to the character $\chi_1(t_1)=e(l_1t_1)$ of $\T$.
\end{example}

If $p\colon\Z\to \R$ is a polynomial sequence of degree $k$,
then $p$ can be uniquely expressed in the form $
p(n)=\sum_{i=0}^k\binom{n}{i}\alpha_i $ where $\alpha_i\in\R$. For $N\in\N$ we
define
\begin{equation}\label{E:norms}
\norm{e(p(n))}_{C^\infty[N]}=\max_{1\leq i\leq k}( N^i \norm{\alpha_i})
\end{equation}
where $\norm{x}=d(x,\Z)$.  One also gets a similar definition for polynomials
$p\colon \Z\to \T$.

%% A Malcev basis $\mathcal{X}$ (to be defined?) is $Q$-{\emph
%%   rational} if all the constants $c_{ijk}$ in %%the relations
%%$$
%%[X_i,X_j]=\sum_k c_{ijk}X_k
%%$$
%%are rational with numerator and denominator (in reduced form) less
%% than $Q$ in absolute value.

Given $N\in\N$, a finite sequence $(g(n)\Gamma)_{1\leq n\leq N}$ is
said to be $\delta$-\emph{equidistributed} if
$$
\Big|\frac{1}{N}\sum_{n=1}^N F(g(n)\Gamma)-\int_{X}F \ dm_X\Big|\leq
\delta \norm{F}_{\text{Lip}}
$$
for every Lipschitz function $F\colon X\to \C$ where
$$
\norm{F}_{\text{Lip}}=\norm{F}_\infty+ \sup_{x,y\in X, x\neq
  y}\frac{|F(x)-F(y)|}{d_X(x,y)}
$$
for some appropriate metric $d_X$ on $X$.\footnote{The metric $d_X$ is
  defined in \cite{GT} using a Malcev basis $\mathcal{X}$ of $X$, so
  the notion of equidistribution we get does depend on the choice of
  the Malcev basis $\mathcal{X}$. The exact definition of $d_X$ will
  not be needed anywhere in our article, so we omit it.}  We can now
state the quantitative equidistribution result that we shall use. It  can be easily
derived from \cite{GT}.
\begin{theorem}[{\bf Green \& Tao~\cite{GT}}]\label{T:GT}
  Let $X=G/\Gamma$ be a nilmanifold with $G$ connected and simply
  connected and $d\in\N$.\footnote{In our context, we assume that the
    Malcev basis and hence the metric on $X$ is fixed. So unlike
    the more refined result stated in \cite{GT}, for the result we state here
     there is no reason to impose
    restrictions on the Malcev basis we use (or even refer to it).}

  Then there exists $C=C_{X,d}>0$ with the following property: For every
  $N\in\N$ and  $\delta$ small enough,
  %% Suppose that $X=G/\Gamma$ is a $d$-step $m$-dimensional
  %% nilmanifold with a $\delta^{-1}$-rational Malcev basis and
  %% suppose that $G$ is connected.
  if $g\colon \Z\to G$ is a polynomial sequence of degree at most $d$
  such that the finite sequence $(g(n)\Gamma)_{1\leq n\leq N}$ is not
  $\delta$-equidistributed, then there exists a non-trivial horizontal character
  $\chi$, with frequency magnitude $\norm{\chi}\leq \delta^{-C}$, such
  that
  \begin{equation}\label{E:badequi}
    \norm{\chi( g(n))}_{C^\infty[N]}\leq  c_1\delta^{-C}
  \end{equation}
%%$$
%%\norm{\chi\circ g}_{C^\infty[N]}\leq c_1 \delta^{-C}
%%$$
  for some absolute constant $c_1$, where $\chi$ is thought of as a
  character of the horizontal torus $H=\T^l$ and $g(n)$ in $\eqref{E:badequi}$  as a
  polynomial sequence in $\T^l$.
\end{theorem}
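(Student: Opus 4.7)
The plan is to obtain this statement as essentially a restatement of the main quantitative Leibman theorem of Green and Tao \cite{GT}. Their result asserts that, on a nilmanifold $X = G/\Gamma$ equipped with a (fixed) Malcev basis and a degree bound $d$, there exists an exponent $A = A_{X,d}$ such that whenever a polynomial orbit $(g(n)\Gamma)_{1 \leq n \leq N}$ of degree at most $d$ fails to be $\delta$-equidistributed (with $\delta$ small enough in terms of $X$ and $d$), one can find a non-trivial horizontal character $\eta$ with
\[
0 < \norm{\eta} \leq \delta^{-A} \quad \text{and} \quad \norm{\eta \circ g}_{C^\infty[N]} \leq \delta^{-A}.
\]

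The derivation then amounts to three bookkeeping steps. First, I would fix once and for all a Malcev basis for $X$ adapted to the lower central series filtration of $G$; since $G$ is connected and simply connected such a basis exists, and this choice determines both the metric $d_X$ appearing in the definition of $\delta$-equidistribution and the identification between horizontal characters and lattice points in the Pontryagin dual of the horizontal torus $H = G/([G,G]\Gamma) \cong \T^l$. Second, since $X$ and $d$ are now regarded as fixed, the dependence of the Green--Tao exponent on $\dim G$, on the chosen Malcev basis, and on $d$ collapses into a single constant $C = C_{X,d}$. Third, the Green--Tao conclusion delivers both $\norm{\chi} \leq \delta^{-C}$ and $\norm{\chi \circ g}_{C^\infty[N]} \leq \delta^{-C}$; the slack factor $c_1$ in the stated inequality is harmless and can in fact be absorbed into $C$.

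There is no genuine mathematical obstacle to overcome, the hard work having been carried out inside \cite{GT}; the proposal is mainly a matter of matching conventions. The points requiring care are verifying that (i) the notion of $\delta$-equidistribution used here, with Lipschitz test functions relative to $d_X$, agrees with the one in \cite{GT} once the Malcev basis is fixed; (ii) a horizontal character $\chi$ as defined here factors through $H$ and, under the isomorphism $H \cong \T^l$, corresponds to a character of the form $t \mapsto e(\kappa \cdot t)$ with $\norm{\chi} = |\kappa|$; and (iii) the smoothness norm $\norm{\cdot}_{C^\infty[N]}$ defined by \eqref{E:norms} for polynomials of degree at most $d$ coincides with the norm used in \cite{GT}. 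Each of these is settled by a direct unpacking of definitions, so the only minor delicacy is the implicit quantifier ``$\delta$ small enough'', which absorbs a threshold $\delta_0 = \delta_0(X,d)$ coming from the Green--Tao statement.
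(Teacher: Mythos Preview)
Your proposal is correct and matches the paper's own treatment: the paper does not prove this statement at all but simply declares that ``it can be easily derived from \cite{GT}'', and your bookkeeping account of that derivation (fixing a Malcev basis, absorbing dependencies into a single constant $C_{X,d}$, and matching the definitions of $\delta$-equidistribution, horizontal characters, and the $C^\infty[N]$ norm) is exactly what that remark is gesturing at.
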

\begin{remarks}
  $\bullet$ We are not going to make use of the explicit form of the
  upper bounds on $\norm{\chi}$ and $\norm{\chi(
    g(n))}_{C^\infty[N]}$, any upper bound that depends only on
  $\delta,X,$ and $d$, will do just fine.

  $\bullet$ Condition \eqref{E:badequi} implies that the finite
  sequence $(\pi(g(n)\Gamma))_{1\leq n<N_1}$ is not
  $(c_2\delta^C)$-equidistributed in $G/([G,G]\Gamma)$ for all every
  $N_1<c_2\delta^CN$, for some absolute constant $c_2$.
\end{remarks}

\begin{example}
  It is instructive to interpret the previous result in some special
  case. Let $X=\T$ (with the standard metric), and suppose that
  the polynomial
  sequence on $\T$ is given by $p(n)=n^d\alpha+q(n)$ where $d\in \N$,
  $\alpha\in \R$, and $q\in\Z[x]$ satisfies $\deg{q}\leq d-1$. In this case
  Theorem~\ref{T:GT} reads as follows: There exists $C=C_d>0$ such
  that for every $N\in\N$ and every  $\delta$ small enough,
  %% Suppose that $X=G/\Gamma$ is a $d$-step $m$-dimensional
  %% nilmanifold with a $\delta^{-1}$-rational Malcev basis and
  %% suppose that $G$ is connected.
  if the finite sequence $(n^d\alpha+q(n))_{1\leq n\leq N}$ is not
  $\delta$-equidistributed in $\T$, then $\norm{k\alpha}\leq
  c_1\delta^{-C}/N^d$ for some $k\in \Z$ with $|k|\leq \delta^{-C}$
  and some absolute constant $c_1>0$.
\end{example}

%% The following result is an immediate consequence of a quantitative
%% uniform distribution result that was recently proved in \cite{GT}:
%% \begin{theorem} [{\bf Green \& Tao~\cite{GT}}]\label{T:GT}
%%   Let $X=G/\Gamma$ be a nilmanifold with $G$ connected and
%%   $d\in\N$.  Then for every $\varepsilon>0$ there exists
%%   $\delta=\delta_{\varepsilon,X,d}>0$ with the following %%property: for every $N\in\N$ and polynomial sequence
%%   $g(n)$ of degree $d$ if $(g(n))_{1\leq n\leq N}$ is
%%   $\delta$-equidistributed on $T=G/([G,G]\Gamma)$ then
%%   $(g(n))_{1\leq n \leq N}$ is $\varepsilon$-equidistributed in
%%   $X$.
%% \end{theorem}
\subsubsection{The general case} \label{S:GT2} In this subsection we
establish an extension of Theorem~\ref{T:GT} to the case where the
group $G$ is not necessarily connected (but we always assume that
$X=G/\Gamma$ is connected and $G$ is simply connected).

Let $G$ be a group.  A map $T\colon G\to G$ is said to be
\emph{affine} if $T(g) = b\cdot S(g)$ for some homomorphism $S$ of $G$
and $b\in G$.  The homomorphism $S$ is said to be {\em unipotent} if
there exists $n\in\N$ so that $(S-{\text Id})^{n}=0$.  In this case we
say that the affine transformation $T$ is a unipotent affine
transformation.

If $X=G/\Gamma$ is a connected nilmanifold, the {\em affine torus} of
$X$ is defined to be the homogeneous space $A=G/([G_0,G_0]\Gamma)$.
The next lemma (whose statement and proof are reproduced from
\cite{FK}) explains our terminology (notice that if $H$ is the group
$G/[G_0,G_0]$, then $H_0$ is Abelian).
\begin{proposition}[{\bf F. \& Kra}~\cite{FK}]\label{P:FK}
  Let $X=G/\Gamma$ be a connected nilmanifold and suppose that the group $G_0$ is
  Abelian.

  Then the nilrotations $T_a(x)=ax$, $a\in G$, defined on $X$
  with the normalized Haar measure $m_X$, are simultaneously
  isomorphic to a collection of unipotent affine transformations on
  some finite dimensional torus with the normalized Haar measure. Furthermore,
  the conjugation map can be taken to be continuous.
\end{proposition}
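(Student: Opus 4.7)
The plan is to realize $X$ explicitly as a finite-dimensional torus, and then read off the affine structure of each $T_a$ directly from the group law of $G$.

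Since $G_0$ is connected, simply connected (by our standing convention), and now also Abelian, it is Lie-isomorphic to $(\R^d,+)$ for some $d\in\N$. Setting $\Gamma_0 := \Gamma\cap G_0$, the natural map $\psi: G_0/\Gamma_0 \to X$, $g_0\Gamma_0 \mapsto g_0\Gamma$, is well defined and injective (since $\Gamma\cap G_0 = \Gamma_0$), surjective (since $G=G_0\Gamma$), and continuous. Because $G_0$ is an open subgroup of $G$, the projection $G_0\to G/\Gamma$ is a covering with fiber $\Gamma_0$, so $\psi$ is actually a homeomorphism; in particular $\Gamma_0$ is a full-rank lattice in $\R^d$, we have $G_0/\Gamma_0 \cong \T^d$, and $\psi$ transports the Haar measure of $\T^d$ to $m_X$. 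Crucially, $\psi$ does not depend on $a$ and will serve as the conjugation map.

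Given any $a\in G$, use $G=G_0\Gamma$ to write $a = b\gamma$ with $b\in G_0$, $\gamma\in\Gamma$. Since $G_0$ is the identity component of $G$, it is normal, so $S_\gamma(g) := \gamma g \gamma^{-1}$ is an automorphism of $G_0$; and since conjugation by $\gamma\in\Gamma$ preserves $\Gamma$, it preserves $\Gamma_0 = \Gamma\cap G_0$, so $S_\gamma$ descends to a continuous automorphism $\bar S_\gamma$ of $\T^d$. A direct computation yields, for $g\in G_0$,
\[
a\cdot(g\Gamma) \;=\; b\,\gamma g\gamma^{-1}\,\gamma\Gamma \;=\; b\,S_\gamma(g)\,\Gamma,
\]
so under $\psi$ the nilrotation $T_a$ is conjugated to the affine map $x\mapsto b + \bar S_\gamma(x)$ on $\T^d$ (where $b$ is read modulo $\Gamma_0$). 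Because the single homeomorphism $\psi$ works for every $a$, the conjugation is simultaneous and continuous.

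It remains only to check that each $\bar S_\gamma$ is unipotent, and this is the one place where the nilpotence of $G$ enters. Writing $G_0$ additively and using the normality of $G_0$, for $g\in G_0$ we compute
\[
(S_\gamma - I)(g) \;=\; \gamma g\gamma^{-1}g^{-1} \;=\; [\gamma,g] \;\in\; G_0\cap G_2.
\]
Inductively, if $h\in G_0\cap G_j$ then $(S_\gamma - I)(h) = [\gamma,h]\in G_0\cap G_{j+1}$, so when $G$ is $k$-step nilpotent we get $(S_\gamma - I)^k(g)\in G_0\cap G_{k+1} = \{0\}$. Thus $\bar S_\gamma$ is unipotent and the conjugate of $T_a$ is a unipotent affine transformation of $\T^d$. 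The only genuine obstacle is this last step: once one recognizes the iterated commutator $[\gamma,[\gamma,\ldots,[\gamma,g]\ldots]]$ as a step-lowering operator in the descending central series, unipotence is immediate; everything else is bookkeeping around the identification $X\cong\T^d$ that is made possible by the assumptions $G=G_0\Gamma$ and $G_0$ Abelian.
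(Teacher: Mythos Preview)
Your proof is correct and follows essentially the same approach as the paper: both identify $X$ with a torus via the decomposition $G=G_0\Gamma$ and read off the affine form $g_0\mapsto a_0\,\gamma g_0\gamma^{-1}$ of the nilrotation. The only cosmetic differences are that the paper first quotients by $\Gamma_0$ (after checking it is normal) rather than working directly with $G_0/\Gamma_0$, and that you spell out the unipotence of $S_\gamma$ via iterated commutators where the paper simply asserts it follows from nilpotence of $G$.
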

\begin{proof}
  We start with a reduction. As we mentioned in Section~\ref{SS:basic}
  since $X$ is connected we can assume that $G=G_0\Gamma$.  We claim
  that under our additional assumption that $G_0$ is Abelian we have
  that $\Gamma_0=\Gamma\cap G_0$ is a normal subgroup of $G$. Let
  $\gamma_0\in \Gamma_0$ and $g=g_0\gamma$, where $g_0\in G_0$ and
  $\gamma\in \Gamma$. Since $G_0$ is normal in $G$, we have that
  $g^{-1}\gamma_0 g\in G_0$. Moreover,
$$
g^{-1} \gamma_0 g=\gamma^{-1} g_0^{-1} \gamma_0 g_0 \gamma=
\gamma^{-1} \gamma_0 \gamma \in\Gamma,
$$
the last equality being valid since $G_0$ is Abelian. Hence,
$g^{-1}\gamma_0 g\in\Gamma_0$ and $\Gamma_0$ is normal in $G$, proving our claim.
After substituting $G/\Gamma_0$ for $G$ and
$\Gamma/\Gamma_0$ for $\Gamma$, we have
$X=(G/\Gamma_0)/(\Gamma/\Gamma_0)$. Therefore, we can assume that $G_0\cap
\Gamma=\{e\}$. Note that now $G_0$ is a connected compact Abelian Lie
group, and so is isomorphic to some finite dimensional torus $\T^d$.

Every $g\in G$ is uniquely representable in the form $g=g_0\gamma$,
with $g_0\in G_0$, $\gamma \in \Gamma$. The map $\phi\colon X\to G_0$,
given by $\phi(g\Gamma)=g_0$ is a well defined homeomorphism. Since
$\phi(hg\Gamma)=h\phi(g\Gamma)$ for every $h\in G_0$, the measure
$\phi(\mu)$ on $G_0$ is invariant under left translations.  Thus
$\phi(m)$ is the normalized Haar measure on $G_0$. If $a=a_0\gamma$,
$g=g_0\gamma'$ with $a_0,g_0\in G_0$ and $\gamma,\gamma'\in\Gamma$,
then $ag\Gamma=a_0\gamma g_0 \gamma^{-1}\Gamma$.  Since $\gamma g_0
\gamma^{-1} \in G_0$, we have that $\phi(ag\Gamma)=a_0\gamma
g_0\gamma^{-1}$. Hence $\phi$ conjugates $T_a$ to $T_a'\colon G_0\to
G_0$ defined by
$$
T_a'(g_0)=\phi T_a \phi^{-1}=a_0\gamma g_0 \gamma^{-1}.
$$
Since $G_0$ is Abelian this is an affine map; its linear part
$g_0\mapsto \gamma g_0 \gamma^{-1}$ is unipotent since $G$ is
nilpotent.  Letting $\psi\colon G_0\to \T^d$ denote the isomorphism
between $G_0$ and $\T^d$, we have that $T_a$ is isomorphic to the
unipotent affine transformation $S=\psi T_a'\psi^{-1}$ acting on
$\T^d$.
\end{proof}
Because of this lemma, we can identify the affine torus $A$ of a
nilmanifold $X$ with a finite dimensional torus $\T^l$ and think of a
nilrotation acting on $A$ as a unipotent affine transformation on
$\T^l$.
\begin{example}Let $X$ be as in Example $1$.  We have $X\simeq
  (\Z\times \R^2)/(\Z\times {\bf 0})$, so we can assume that we have
  equality. If $a=(m,\alpha_1,\alpha_2)\in \Z\times \T^2$, then the map
  $\phi\colon \Z\times \T^2\to \T^2$, defined by
  $\phi(k,t_1,t_2)=(t_1,t_2) \pmod{1}$, factors through $X$, and
  conjugates the nilrotation $T_a(x)=ax$ to the unipotent affine
  transformation $S \colon \T^2\to\T^2$ defined by
$$
S(t_1,t_2)=(t_1+\alpha_1,t_2+mt_1+\alpha_2).
$$
\end{example}
A \emph{quasi-character} of a nilmanifold $X=G/\Gamma$ is a
function $\psi\colon G\to \C$
that is a continuous homomorphism of $G_0$ and satisfies $\psi(g\gamma)=\psi(g)$
for every $\gamma\in\Gamma$. Every quasi-character annihilates
$[G_0,G_0]$, so it factors through the affine torus $A$ of $X$. Under
the identification of Proposition~\ref{P:FK} we have that $A\simeq
\T^l$ and every quasi-character of $X$ is mapped to a character of
$\T^l$. Therefore,  thinking of $\psi$ as a character of $\T^l$ we have
$\psi(t)=\kappa\cdot t$ for some  $\kappa\in\Z^l$, where $\cdot$
denotes the inner product operation.  We refer to $\kappa$ as the
\emph{frequency} of $\psi$ and $\norm{\psi}=|\kappa|$ as the
\emph{frequency magnitude} of $\psi$.
\begin{example}
  Let $X$ be as in Example $1$.  The map
  $\psi(m,x_1,x_2)=e(l_1x_1+l_2x_2)$, where $l_1,l_2\in\Z$, is a
  quasi-character of $X$. Notice that $\psi$ is not a homomorphism of $G$ and
  and so it  is  not a  character of
  $X$.  The map $\phi(m,x_1,x_2)=(x_1,x_2) \pmod{1}$ induces an
  identification of the affine torus (in this case $A=X$) with $\T^2$.
  Under this identification, $\psi$ is mapped to the character
  $\psi_1(t_1,t_2)=e(l_1t_1+l_2t_2)$ of $\T^2$.
\end{example}
We are now ready to state the advertised extension of
Theorem~\ref{T:GT}:
\begin{theorem}[{\bf Corollary of
 %%of Green \&    Tao~
    Theorem~\ref{T:GT}}]\label{T:GT2}
  Let $X=G/\Gamma$ be a connected nilmanifold $($we  always assume that $G_0$ is simply connected$)$ and $d\in\N$.

  Then there
  exists $C=C_{X,d}>0$ with the following property: For every $N\in\N$
  and $\delta$ small enough,
  %% Suppose that $X=G/\Gamma$ is a $d$-step $m$-dimensional
  %% nilmanifold with a $\delta^{-1}$-rational Malcev basis and
  %% suppose that $G$ is connected.
  if $g\colon \Z\to G$ is a polynomial sequence of degree at most $d$
  such that the finite sequence $(g(n)\Gamma)_{1\leq n\leq N}$ is not
  $\delta$-equidistributed, then there exists a non-trivial quasi-character $\psi$
  with frequency magnitude $\norm{\psi}\leq \delta^{-C}$ such that
  \begin{equation}\label{E:quasi}
    \norm{\psi(g(n))}_{C^\infty[N]}\leq  c_1 \delta^{-C}
  \end{equation}
%%$$
%%\norm{\chi\circ g}_{C^\infty[N]}\leq c_1 \delta^{-C}
%%$$
  for some absolute constant $c_1$, where we think of $\psi$ as a
  character of some finite dimensional torus $\T^l$ $($the affine
  torus$)$ and $g(n)$ as a polynomial sequence of unipotent affine
  transformations on $\T^l$.
\end{theorem}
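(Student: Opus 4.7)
The plan is to reduce Theorem~\ref{T:GT2} to Theorem~\ref{T:GT} by re-presenting the nilmanifold $X$ as a quotient of the connected simply connected group $G_0$. Since $X=G/\Gamma$ is assumed connected, one has $G=G_0\Gamma$, and setting $\Gamma_0=G_0\cap\Gamma$ gives the alternative presentation $X=G_0/\Gamma_0$, with $\Gamma_0$ discrete and cocompact in $G_0$. Under this identification, every quasi-character of $X=G/\Gamma$ (by definition a continuous homomorphism of $G_0$ trivial on $\Gamma$, equivalently on $\Gamma_0$) is exactly a horizontal character of $X=G_0/\Gamma_0$, and the affine torus $G/([G_0,G_0]\Gamma)$ coincides with the horizontal torus $G_0/([G_0,G_0]\Gamma_0)$. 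So the two notions of frequency magnitude agree.

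The key step will be a lifting lemma: every polynomial sequence $g\colon\Z\to G$ of degree at most $d$ factors as $g(n)=\tilde{g}(n)\,\bar\gamma(n)$ with $\tilde{g}\colon\Z\to G_0$ a polynomial sequence of degree at most $d'=d'(X,d)$ and $\bar\gamma(n)\in\Gamma$. I would decompose $g(n)=a_1^{p_1(n)}\cdots a_k^{p_k(n)}$ and write $a_i=b_i\gamma_i$ with $b_i\in G_0$ and $\gamma_i\in\Gamma$, then handle each factor $(b_i\gamma_i)^{p_i(n)}$ separately via the identity
\[
(b\gamma)^m = b\,\phi_\gamma(b)\,\phi_\gamma^2(b)\cdots\phi_\gamma^{m-1}(b)\,\gamma^m,
\]
where $\phi_\gamma(x)=\gamma x\gamma^{-1}$ is conjugation, which preserves $G_0$. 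The crucial observation is that $\phi_\gamma$ is always a unipotent automorphism of $G_0$: since conjugation preserves the lower central series $\{G_i\}$ of $G$ and $[\gamma,G_i]\subseteq G_{i+1}$, the derivative $\mathrm{Ad}(\gamma)$ acts as the identity on each successive quotient of the induced filtration of $\mathfrak{g}_0$, so $(\mathrm{Ad}(\gamma)-I)^r=0$ where $r$ is the nilpotency class of $G$. It follows that $m\mapsto\phi_\gamma^m(b)$ is a polynomial sequence in $G_0$ of degree at most $r-1$, and a short Baker--Campbell--Hausdorff computation shows that the partial product $m\mapsto\prod_{j<m}\phi_\gamma^j(b)$ is a polynomial sequence in $G_0$ of degree at most $r$. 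Substituting $m=p_i(n)$ and multiplying the $k$ factors using Leibman's group property of polynomial sequences in $G_0$ yields $\tilde{g}$, with $d'$ bounded in terms of $d$, $k$, and $r$.

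Once the lifting is established, the conclusion is immediate. If $(g(n)\Gamma)_{1\leq n\leq N}$ fails to be $\delta$-equidistributed in $X=G/\Gamma$, then the identical sequence $(\tilde{g}(n)\Gamma_0)_{1\leq n\leq N}$ fails to be $c\delta$-equidistributed in $X=G_0/\Gamma_0$ for some constant $c>0$ depending only on the comparison of the two Malcev metrics. Applying Theorem~\ref{T:GT} to $G_0/\Gamma_0$ and the lifted polynomial sequence $\tilde{g}$ of degree at most $d'$ produces a non-trivial horizontal character $\chi$ with $\norm{\chi}\leq \delta^{-C}$ and $\norm{\chi(\tilde{g}(n))}_{C^\infty[N]}\leq c_1\delta^{-C}$. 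I then interpret $\chi$ as a quasi-character $\psi$ of $G/\Gamma$ via $\psi|_{G_0}=\chi$, and use the right-$\Gamma$-invariance of $\psi$ to get $\psi(g(n))=\psi(\tilde g(n)\bar\gamma(n))=\chi(\tilde g(n))$, verifying \eqref{E:quasi}. The main obstacle is the lifting step, and specifically the degree bookkeeping: controlling $d'$ in terms of $d$ and $X$ via the Leibman framework and the unipotency of every $\phi_\gamma$ is the only real work; once this is done, Theorem~\ref{T:GT2} follows at once from Theorem~\ref{T:GT}.
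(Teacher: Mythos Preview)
Your proposal is correct and follows essentially the same route as the paper: identify $X$ with $G_0/\Gamma_0$, factor $g(n)$ as a polynomial sequence in $G_0$ times an element of $\Gamma$ (the paper cites Leibman and Bergelson--Leibman--Lesigne for this step and obtains the degree bound $d\cdot(\text{nilpotency step})$, while you argue directly via unipotency of conjugation), apply Theorem~\ref{T:GT} to $G_0/\Gamma_0$, and then reinterpret the resulting horizontal character as a quasi-character of $X$. The only place the paper is slightly more explicit than you is in the last step, where it constructs the extension of $\chi_0$ from $G_0$ to all of $G$ by fixing a set of coset representatives $\tilde\Gamma\subset\Gamma$ for $G/G_0$ and checking well-definedness and right $\Gamma$-invariance of $\psi(h_0\tilde\gamma)=\chi_0(h_0)$.
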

\begin{remark}
  We have $\psi(g(n))=e(p(n))$ for some $p\in \R[x]$ and so
  $\norm{\psi( g(n))}_{C^\infty[N]}$ is well defined.
\end{remark}

We first make some observations that will help us deduce
Theorem~\ref{T:GT2} from Theorem~\ref{T:GT}.  As we remarked in
Section~\ref{SS:basic}, if $X=G/\Gamma$ is a connected nilmanifold we
can assume that every $g\in G$ is representable in the form
$g_0\gamma$, where $g_0\in G_0$ and $\gamma\in \Gamma$. Therefore, $X=(G_0\Gamma)/\Gamma$ can be identified with the nilmanifold
$G_0/(G_0\cap \Gamma)$.
%% where $\Gamma_0=\Gamma\cap G_0$.
 If $a\in G$ we have
$a=a_0\gamma$ for some $a_0\in G_0$ and $\gamma\in \Gamma$. Since
$G_0$ is a normal subgroup of $G$ we have that $a^n=a_n\gamma^n$ for
some $a_n\in G_0$. Using this, one easily verifies that any degree $d$
polynomial sequence $g(n)$ in $G$ factors as follows:
$g(n)=g_0(n)\gamma(n)$ where $g_0(n)\in G_0$ for $n\in\N$ and
$\gamma(n)$ is a degree $d$ polynomial sequence in $\Gamma$. By
Proposition $3.9$ in \cite{L1} (for a more direct proof see
Proposition 4.1 in \cite{BLL}) we get that $g_0(n)$ is also a
polynomial sequence in $G_0$. Moreover, if $G$ is $k$-step nilpotent,
a close examination of the proof of Proposition 4.1 in \cite{BLL}
reveals that the degree of $g_0(n)$ is at most $dk$.
\begin{example}
  Let $X$ be the nilmanifold of Example $1$. We have $G_0=\{0\}\times
  \R^2$ and the map $\phi\colon \Z\times \T^2\to \T^2$, defined by
  $\phi(k,t_1,t_2)=(0,t_1,t_2)$, induces an identification between $X$
  and the nilmanifold $G_0/(G_0\cap \Gamma) \simeq \T^2$. For
  $a=(2,\alpha,\alpha)$ the polynomial sequence $g(n)=a^n$ in $G$
  factors as
$$
g(n)=(2n, n\alpha,n^2\alpha)=a_0^n \cdot b_0^{n^2}\cdot \gamma^n
$$
where $a_0=(0,\alpha,0), b_0=(0,0,\alpha)\in G_0$, and
$\gamma=(2,0,0)\in \Gamma$.  In this case we have that $g_0(n)=a_0^n
\cdot b_0^{n^2}$ is a degree $2$ polynomial sequence in $G_0$.
\end{example}
\begin{proof}[Proof of Theorem~\ref{T:GT2}]
  Let $C=C_{G_0/(G_0\cap \Gamma),kd}$ be the positive number defined in
  Theorem~\ref{T:GT}.  Suppose that $(g(n)\Gamma)_{1\leq n\leq N}$ is
  not $\delta$-equidistributed in $X=G/\Gamma$ for some small enough
  $\delta$.  As discussed before, we have
  $g(n)\Gamma=g_0(n)\Gamma$ where $g_0(n)$ is a polynomial sequence in
  $G_0$ of degree at most $kd$.  Since $X$ can be identified with
  $G_0/(G_0\cap \Gamma)$, it follows that the finite sequence
  $(g_0(n)(G_0\cap \Gamma))_{1\leq n\leq N}$ is not $\delta$-equidistributed
  in $G_0/(G_0\cap \Gamma)$.  Since $G_0$ is connected and simply connected,
  and the polynomial
  sequence $g_0(n)$ is defined in $G_0$, and has degree at most $kd$,
  Theorem~\ref{T:GT} applies. We get that there exists a non-trivial horizontal
  character $\chi_0$ of $G_0/(G_0\cap \Gamma)$ such that $\norm{\chi_0}\leq
  \delta^{-C}$ and
$$
\norm{\chi_0( g_0(n))}_{C^\infty[N]}\leq c_1 \delta^{-C}.
$$

We can lift $\chi_0$ to a quasi-character of $X$ as follows:
Consider the  discrete group $G/G_0$. Since $G=G_0\Gamma$  we have $G/G_0=\{\gamma G_0\colon \gamma\in \Gamma\}$.
Let  $\tilde{\Gamma}$  be a subset of $\Gamma$ so that the map $\gamma\to \gamma G_0$, from $\tilde{\Gamma}$ to $G/G_0$,
is bijective.
 Then every element $h\in G$ has a unique representation $h=h_0\tilde{\gamma}$
 with $h_0\in G_0$ and $\tilde{\gamma}\in \tilde{\Gamma}$.
We define the map $\psi\colon G\to \C$ by  $\psi(h)=\chi_0(h_0)$.
Since $\chi_0(g_0\gamma_0)=\chi_0(g_0)$ for every  $g_0\in G_0$ and $\gamma_0\in G_0\cap \Gamma$, it
 follows that
$\psi$  agrees with $\chi_0$ on  $G_0$.
Furthermore, writing $\gamma\in \Gamma$ as $\gamma=\gamma_0\tilde{\gamma}$ with $\gamma_0\in G_0\cap \Gamma$ and
$\tilde{\gamma}\in \tilde{\Gamma}$, and using again that $\chi_0(g_0\gamma_0)=\chi_0(g_0)$ for $g_0\in G_0$, one gets
that $\psi(g_0\gamma)=\psi(g_0\gamma_0\tilde{\gamma})=\chi_0(g_0\gamma_0)=\chi_0(g_0)=\psi(g_0)$ for every $g_0\in G_0$ and $\gamma\in \Gamma$.  Since every $g\in G$ can be written as
 $g=g_0\tilde{\gamma}$ for some  $g_0\in G_0$ and $\tilde{\gamma}\in \tilde{\Gamma}$, we conclude that
 $\psi(g\gamma)=\psi(g_0)=\psi(g)$ for every $g\in G$ and $\gamma\in \Gamma$.
We have established that  $\psi$ is a
quasi-character of $X$ that extends the character $\chi_0$.

Since $\psi(g(n))=\chi_0( g_0(n))$, we get
that $\norm{\psi}=\norm{\chi_0}\leq \delta^{-C}$ and also that
equation \eqref{E:quasi} is satisfied. Lastly, $\psi$ factors through
the affine torus $A$ and by Proposition~\ref{P:FK}, $A$ can be
identified with a finite dimensional torus $\T^l$. Under this
identification $\psi$ is mapped to a character of $\T^l$ and the
polynomial sequence $g(n)$ on the affine torus $A$ is mapped to a
polynomial sequence of unipotent affine transformations on $\T^l$.
This completes the proof.
\end{proof}

\section{A model  multiple recurrence result}\label{S:singlewarmup}
%% \subsection{A warmup multiple recurrence problem (Theorem
%%   B')}\label{SS:warmup}
We are going to prove Theorem~B'. For convenience, we repeat its statement:
\begin{theoremB'}
  Suppose that for every $r,m\in \N$ the set $S\subset \Z$
  contains patterns of the form $$\{r(c_{r,m}+mn)\colon 1\leq
  n\leq N_{r,m}\}$$ where $c_{r,m}, N_{r,m}$ are integers with $\lim_{m\to\infty}N_{r,m}=\infty$
  for every fixed $r\in \N$.

  Then $S$ is a set of multiple recurrence.
\end{theoremB'}

Part of the proof of Theorem~B' (the proof of Proposition
\ref{L:equidistribution} and the final step of the proof of Theorem~B'
in Section~\ref{SS:warmupconclusion}) carries almost verbatim to the
more complicated Hardy field setup (proof of Theorem A'). In order to
better illustrate the ideas we chose to give the argument in this
simpler setup. It splits in two parts, we first reduce things to
nilsystems and then verify a multiple recurrence property for
nilsystems.
%% \begin{proposition}
%%   Let $1<b<2$. Then there exist a set $S\subset \N$ with $d(S)>0$,
%%   and $c_m,N_m\in \N$ with $N_m\to\infty$ as $m\to\infty$, such
%%   that
%%$$
%%\{c_m+mn\colon 1\leq n\leq N_m, m\in S\} \subset \{[n^b]\colon n\in
%% \N\}.
%%$$
%% \end{proposition}
%% \begin{proof}
%%$$
%%(n_m+n)^b=n_m^b+bn n_m^{b-1}+\frac{b(b-1)}{2}n^2\xi_n^{b-2}
%%$$
%% \end{proof}
\subsection{Reduction to nilsystems}
We shall study the multiple ergodic averages that are naturally
associated to the multiple recurrence problem of Theorem~B'. We shall
show that the nilfactor is characteristic for $L^2$-convergence of
these averages.  Using Theorem~\ref{T:HoKra} it is
then not hard to see that in order to establish Theorem~$B'$ it
suffices to verify a multiple recurrence property for nilsystems.

As it is often the case when proving such reduction results, a key
tool is a Hilbert space
version of a classical elementary estimate of van der Corput. Its proof
is identical with the proof of this classical estimate (e.g. Lemma~3.1 in \cite{KN}).
\begin{lemma} \label{L:VDC} Let $v_1,\ldots, v_N$ be vectors
  of a Hilbert space with $\|v _i\| \leq 1$ for $i=1,\ldots,
  N$.

  Then for every integer  $H$ between $1$ and $N$ we have
$$
\norm{\frac{1}{N}\sum_{n=1}^N v_n}^2\leq 4 \cdot \Big(
\frac{1}{H}+\frac{H}{N}+ \frac{1}{H}\sum_{h=1}^H\Big|\frac{1}{N}
\sum_{n=1}^{N}\langle v_{n+h},v_n\rangle\Big|\Big).
$$
\end{lemma}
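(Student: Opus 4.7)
The plan is to transcribe the classical van der Corput / Cauchy--Schwarz argument from scalars to the Hilbert space setting; since the inequality is formal, the proof of Lemma~3.1 of Kuipers--Niederreiter applies line by line with absolute values replaced by Hilbert space norms and products by inner products. First I would extend the vectors by setting $v_n = 0$ for $n \notin \{1,\ldots,N\}$ and observe that a direct counting gives
$$\norm{H\sum_{n=1}^N v_n - \sum_{n=1}^N \sum_{h=1}^H v_{n+h}} \leq H^2,$$
since the $v_m$ that do not appear with full multiplicity $H$ are confined to a window of length $\leq 2H$ and each contributes at most $H$ times.

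The second step is the vector-valued Cauchy--Schwarz applied to $w_n = \sum_{h=1}^H v_{n+h}$:
$$\norm{\sum_{n=1}^N w_n}^2 \leq N \sum_{n=1}^N \norm{w_n}^2 = N \sum_{n=1}^N \sum_{h_1,h_2=1}^H \langle v_{n+h_1}, v_{n+h_2}\rangle.$$
The diagonal terms $h_1 = h_2$ contribute at most $N^2 H$ (since $\norm{v_i}\leq 1$), producing the $\tfrac{1}{H}$ contribution after we divide by $(HN)^2$. For the off-diagonal terms I would reindex by $h = h_2 - h_1 \in \{1,\ldots,H-1\}$, use the Hermitian symmetry of $\langle\cdot,\cdot\rangle$ to combine $h_1 < h_2$ with $h_1 > h_2$, and shift the summation variable to express each inner sum in the form $\sum_{n=1}^N \langle v_{n+h}, v_n\rangle$. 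Up to a boundary error again of order $H^2$ per $h$, this yields $2\sum_{h=1}^{H-1}(H-h)\,\big|\sum_{n=1}^N \langle v_{n+h}, v_n\rangle\big|$, and after dividing by $(HN)^2$ and using $(H-h)/H \leq 1$ I obtain the third term $\tfrac{1}{H}\sum_{h=1}^H \big|\tfrac{1}{N}\sum_n \langle v_{n+h}, v_n\rangle\big|$ in the claimed inequality.

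Finally I would combine these contributions with the triangle inequality in the form $(a+b)^2 \leq 2(a^2+b^2)$ applied to the first-step comparison, and use $H^2/N^2 \leq H/N$ (valid since $H \leq N$) to absorb all boundary errors into the $\tfrac{H}{N}$ term. The main obstacle, such as it is, is purely bookkeeping: tracking the boundary defects precisely enough for them to be absorbed into the three terms with a single universal constant in front, and choosing estimates with enough slack to land on the explicit constant $4$. No new ideas are required, and the exact value of the absolute constant plays no role in the applications of the lemma later in the paper.
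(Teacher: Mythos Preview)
Your proposal is correct and is precisely the approach the paper intends: the paper does not give a proof at all but simply remarks that ``its proof is identical with the proof of this classical estimate (e.g.\ Lemma~3.1 in \cite{KN})'', i.e.\ exactly the van der Corput / Cauchy--Schwarz argument you outline with norms and inner products in place of absolute values and products. Your bookkeeping sketch matches the standard derivation and no further idea is needed.
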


\begin{lemma}\label{L:seminorm} Let $(X,\mathcal{B},\mu,T)$ be a system and
  $f_1,f_2\in L^\infty(\mu)$
  satisfy $f_i\bot \mathcal{Z}$ for $i=1$ or $2$, where $\cZ$ is the nilfactor of the system.
Let $c_m,N_m$ be integers and $N_m\to \infty$ as $m\to\infty$.

   Then the averages
  \begin{equation}\label{E:S_M}
    \frac{1}{M}\sum_{m=1}^M\Big(\frac{1}{N_m}\sum_{n=1}^{N_m}   T^{c_m+mn}f_1
    \cdot T^{2(c_m+mn)}f_2\Big)
  \end{equation}
  converge to $0$ in $L^2(\mu)$ as $M\to \infty$.
\end{lemma}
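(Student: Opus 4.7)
The plan is to prove the lemma by combining van der Corput's inequality (Lemma~\ref{L:VDC}) with the characterization of the nilfactor via Host-Kra seminorms, which is implicit in Theorem~\ref{T:L2}. Writing $u_m := \frac{1}{N_m}\sum_{n=1}^{N_m} T^{c_m+mn}f_1\cdot T^{2(c_m+mn)}f_2$, Cauchy-Schwarz applied to the outer average over $m$ gives
\[
\Bigl\|\frac{1}{M}\sum_{m=1}^{M} u_m\Bigr\|_{L^2}^2 \leq \frac{1}{M}\sum_{m=1}^{M}\|u_m\|_{L^2}^2,
\]
so it suffices to show that $\frac{1}{M}\sum_{m=1}^{M}\|u_m\|_{L^2}^2 \to 0$ as $M\to\infty$.

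For each fixed $m$, I would apply Lemma~\ref{L:VDC} to the $L^2$-sequence $v_n := T^{c_m+mn}f_1\cdot T^{2(c_m+mn)}f_2$: for every integer $1\leq H\leq N_m$,
\[
\|u_m\|_{L^2}^2 \leq 4\Bigl(\tfrac{1}{H} + \tfrac{H}{N_m} + \tfrac{1}{H}\sum_{h=1}^{H}|A(m,h)|\Bigr),
\]
where a direct computation (translating by $T^{-c_m-mn}$ inside the integral and using $T$-invariance of $\mu$) yields
\[
A(m,h) := \tfrac{1}{N_m}\sum_{n=1}^{N_m}\langle v_{n+h},v_n\rangle = \int (\bar f_1\cdot T^{mh}f_1)\cdot\Bigl(\tfrac{1}{N_m}\sum_{n=1}^{N_m} T^{c_m+mn}(\bar f_2\cdot T^{2mh}f_2)\Bigr)\,d\mu.
\]
Averaging this bound over $m\in\{1,\ldots,M\}$, the middle term contributes $\frac{H}{M}\sum_{m=1}^{M}\frac{1}{N_m}$, which tends to $0$ as $M\to\infty$ (Cesàro limit of $1/N_m$, since $N_m\to\infty$). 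So the problem reduces to showing that, for each fixed $h\in\N$, $\frac{1}{M}\sum_{m=1}^{M}|A(m,h)|\to 0$, from which the conclusion follows by first sending $M\to\infty$ and then $H\to\infty$.

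To prove this reduction, I would apply van der Corput a second time, now estimating the outer average of $A(m,h)$ in $m$. Combined with Cauchy-Schwarz and the mean ergodic theorem applied to the inner $T^m$-average, this produces four-fold correlation integrals of translates of $f_1$ and $f_2$, with shift parameters built from $c_m$, $mh$, and $2mh$. These correlations are in turn controlled by Host-Kra seminorms $\nnorm{f_1}_\ell$ or $\nnorm{f_2}_\ell$ (for some fixed $\ell$ depending only on the linear nature of the polynomials $c_m+mn$ and $2(c_m+mn)$), via the seminorm inequalities underlying Theorem~\ref{T:L2}. Since the hypothesis $f_i\perp\mathcal{Z}$ yields $\nnorm{f_i}_\ell=0$ for every $\ell$, these bounds force the correlation averages to vanish.

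The main obstacle is the careful bookkeeping of shifts through the two applications of van der Corput: the auxiliary parameters $c_m$ and $mh$ vary with $m$, so to absorb them we need the seminorm estimate on the four-fold correlations to be uniform in these parameters. Working with the full nilfactor $\mathcal{Z}=\bigvee_\ell \mathcal{Z}_\ell$, rather than a fixed $\mathcal{Z}_\ell$, is what makes this uniformity automatic, and this is the feature that will carry over almost verbatim to the more delicate Hardy-field estimate (Proposition~\ref{P:seminorm2}).
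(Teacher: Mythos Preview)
Your first step---Cauchy--Schwarz on the outer $m$-average followed by van der Corput in $n$---is correct and matches the paper's argument. The computation of $A(m,h)$ is right, and your order-of-limits plan (fix $H$, send $M\to\infty$, then $H\to\infty$) is a legitimate variant of the paper's scheme, which instead lets $H=H_{m,1}$ depend on $m$ with $H_{m,1}\prec N_m$.

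The second step is where your description goes astray. Applying van der Corput \emph{in $m$} to the scalars $A(m,h)$ does not work: the product $A(m+k,h)\overline{A(m,h)}$ involves the unrelated quantities $c_{m+k},N_{m+k}$ and $c_m,N_m$, and nothing cancels. Nor is any ``mean ergodic theorem applied to the inner $T^m$-average'' used. What the paper does---and what you should do---is apply Cauchy--Schwarz to $|A(m,h)|$ (pulling out the bounded factor $\bar f_1\cdot T^{mh}f_1$) and then apply van der Corput \emph{in $n$ a second time} to the remaining average $\frac{1}{N_m}\sum_n T^{c_m+mn}(\bar f_2\cdot T^{2mh}f_2)$. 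Because the shift $c_m+mn$ is linear in $n$, the inner product $\langle v_{n+h_2},v_n\rangle$ is independent of $n$ and of $c_m$, so the resulting four-fold correlation
\[
\int f_2\cdot T^{2mh}\bar f_2\cdot T^{mh_2}\bar f_2\cdot T^{2mh+mh_2}f_2\,d\mu
\]
contains no trace of $c_m$ whatsoever. The ``uniformity in $c_m$'' you worry about in your last paragraph is therefore a non-issue: the point of the second van der Corput in $n$ is precisely to \emph{eliminate} $c_m$, not to absorb it.

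After this second step the paper packages the three parameters $(m,h_1,h_2)$ into a F{\o}lner sequence in $\Z^3$ and invokes Corollary~\ref{C:L2'} once; with your fixed-$H$ approach you would instead invoke Corollary~\ref{C:L2'} in the single variable $m$ for each fixed pair $(h,h_2)$. Either route works, but note that the four-fold correlations involve only translates of $f_2$ (or only of $f_1$, in the symmetric case), not both.
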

\begin{proof}
  We can assume that $f_2\bot \mathcal{Z}$, the proof is similar in
  the other case. Furthermore, we can assume that $\norm{f_1}_{L^\infty}, \norm{f_2}_{L^\infty}\leq 1$.
    Let $A_M$ denote the averages \eqref{E:S_M}.  We
  have
$$
\norm{A_M}_{L^2(\mu)}^4 \leq \frac{1}{M}\sum_{m=1}^{M}\norm{
  \frac{1}{N_m}\sum_{n=1}^{N_m} T^{c_m+mn}f_1 \cdot\
  T^{2(c_m+mn)}f_2}_{L^2(\mu)}^4.
$$
Using Lemma~\ref{L:VDC} and the Cauchy-Schwarz inequality we get that
for every $H_{m,1}$ such that $H_{m,1}\prec N_m$ (meaning
$H_{m,1}/N_m\to 0$ as $m\to\infty$) the last expression is bounded by
\begin{align*}
  \frac{4}{M}\sum_{m=1}^{M}\frac{1}{H_{m,1}}\sum_{h_1=1}^{H_{m,1}}\Big|
  \frac{1}{N_m}\sum_{n=1}^{N_m}\int T^{c_m+mn}\bar{f_1} \cdot\
  T^{2(c_m+mn)}\bar{f_2}\cdot T^{c_m+mn+mh_1} f_1 \cdot &\
  T^{2(c_m+mn+mh_1)}f_2\ d\mu\Big|^2\\ &+o_{M,H_{m,1}}{(1)}.
\end{align*}
%% where $o_{M,H_{m,1}}(1)$ denotes an expression that goes to zero
%% when $H_{m,1}, M \to \infty$.
Factoring out the measure preserving transformation $T^{c_m+mn}$ and
using the Cauchy-Schwarz inequality we see that the last expression is bounded by
$$
  \frac{4}{M}\sum_{m=1}^{M}\frac{1}{H_{m,1}}\sum_{h_1=1}^{H_{m,1}}
  \norm{\bar{f_1}\cdot T^{mh_1} f_1}_{L^\infty(\mu)}\int  \Big|\frac{1}{N_m}
  \sum_{n=1}^{N_m}
  T^{c_m+mn}\bar{f_2} \cdot \
  T^{c_m+mn+2mh_1}f_2\Big|^2 \ d\mu+o_{M,H_{m,1}}{(1)}.
$$
Factoring out $T^{c_m}$ and using that $\norm{f_1}_{L^\infty}\leq 1$ we see that
the last expression is bounded by
$$
\frac{4}{M}\sum_{m=1}^{M}\frac{1}{H_{m,1}}\sum_{h_1=1}^{H_{m,1}}\norm{
  \frac{1}{N_m}\sum_{n=1}^{N_m} T^{mn}\bar{f_2}\cdot
  T^{mn+2mh_1}f_2}_{L^2(\mu)}^2+ o_{M,H_{m,1}}{(1)}.
$$
Using Lemma~\ref{L:VDC} again, factoring out $T^{mn}$, and
noticing that the resulting expression no longer depends on $n$, we
get that for every $H_{m,1},H_{m,2}\prec N_m$ this last expression is
bounded by
$$
\frac{4}{M}\sum_{m=1}^{M}\frac{1}{H_{m,1}}\sum_{h_1=1}^{H_{m,1}}
\frac{1}{H_{m,2}}\sum_{h_2=1}^{H_{m,2}}\Big|\int f_2 \cdot\
T^{2mh_1}\bar{f_1}\cdot T^{mh_2}\bar{f_2} \cdot\ T^{2mh_1+mh_2}f_2\
d\mu\Big| +o_{M,H_{m,1}, H_{m,2}}{(1)}.
$$
We can  choose $H_{m,1}, H_{m,2}$ to be $\prec N_m$, increase to $\infty$
as $m\to\infty$, and furthermore such that the subsets of
$\mathbb{N}^3$ defined by
$$
\Phi_{M}=\{ (m,h_1,h_2)\in \mathbb{N}^3\colon 1\leq m\leq M,\ 1\leq
h_1\leq H_{m,1},\ 1\leq h_2\leq H_{m,2}\}
$$
for $M\in\N$, form a F{\o}lner sequence.
%%(for example this would be the
%%case if $H_{m+1,1}- H_{m,1}$ and $H_{m+1,2}- H_{m,2}$ are bounded).
Since $f_2\bot \mathcal{Z}$, by
Corollary~\ref{C:L2'}  we have
$$
\frac{1}{|\Phi_M|}\sum_{(m,h_1,h_2)\in \Phi_M}\Big|\int f_2 \cdot\
T^{2mh_1}\bar{f_1}\cdot T^{mh_2}\bar{f_2} \cdot\ T^{2mh_1+mh_2}f_2\
d\mu\Big|
$$
converges to zero as $M\to\infty$. This shows that the averages  $A_M$ converge to
zero in $L^2(\mu)$ as $M\to \infty$ and finishes the proof.
\end{proof}
%% \begin{remark}
%%   It is actually possible to show that even if $f\bot
%%   \mathcal{Z}_2$ then the previous averages converges to zero. As a
%%   result when studying these averages we can restrict ourselves to
%%   $2$-step nilsystems. This case is simple enough to be handled
%%   ``by hand'', this way we can get the multiple recurrence result
%%   without great effort. However, this brute force approach we will
%%   not work for the general case so we will not pursue this venue.
%% \end{remark}
The proof of the next result is very similar to the proof of
Lemma~\ref{L:seminorm}, only notationally more complicated, and so we
omit it.
\begin{proposition}\label{P:seminorm'} Let $(X,\mathcal{B},\mu,T)$ be
  a system and  $f_1,\ldots, f_{\ell} \in L^\infty(\mu)$ satisfy
  $f_i\bot \mathcal{Z}$ for some $i=1,\ldots,\ell$, where $\cZ$ is the nilfactor of the system. Let $c_m,N_m$ be integers with $N_m\to  \infty$.

   Then the averages
$$
\frac{1}{M}\sum_{m=1}^M\Big(\frac{1}{N_m}\sum_{n=1}^{N_m}
T^{c_m+mn}f_1 \cdot T^{2(c_m+mn)}f_2\cdot\ldots\cdot T^{\ell
  (c_m+mn)}f_\ell \Big)
$$
converge to $0$ in $L^2(\mu)$ as $M\to \infty$.
\end{proposition}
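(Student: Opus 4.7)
The plan is to extend the proof of Lemma~\ref{L:seminorm} by iterating the van der Corput inequality (Lemma~\ref{L:VDC}) a total of $\ell$ times, one application per function $f_j$ to be eliminated. A preliminary reduction allows us to assume that the orthogonal function is $f_\ell$: if instead $f_i \perp \cZ$ for some $i < \ell$, one performs the iteration in the order that postpones elimination of $f_i$ until last, which is the analogue of the remark ``the proof is similar in the other case'' used in Lemma~\ref{L:seminorm} to switch from $f_2 \perp \cZ$ to $f_1 \perp \cZ$.

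Let $B_M$ denote the averages to be shown $L^2$-convergent to $0$, and write $B_M = \frac{1}{M}\sum_{m=1}^M A_m$ where $A_m = \frac{1}{N_m}\sum_{n=1}^{N_m} T^{c_m+mn} f_1 \cdots T^{\ell(c_m+mn)} f_\ell$. Iterating the Cauchy--Schwarz inequality gives $\|B_M\|_{L^2}^{2^\ell} \leq \frac{1}{M}\sum_{m=1}^M \|A_m\|_{L^2}^{2^\ell}$. I would then perform $\ell$ stages, each invoking Lemma~\ref{L:VDC} applied to an inner squared $L^2$-norm and followed by Cauchy--Schwarz to move the outer power inside. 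At the $k$-th stage one introduces a new shift variable $h_k$, absorbs the $f_k$-dependent factors into an $L^\infty$-bound of the form $\|\bar f_k \cdot T^{kmh_k}f_k\|_{L^\infty} \leq 1$, and factors out the measure-preserving operators involving $c_m$ and $mn$, leaving $L^2$-norm-squares of $n$-averages of products of $\ell-k$ functions with shifts polynomial in $m, h_1, \ldots, h_k$. After $\ell-1$ stages only $f_\ell$ and $\bar f_\ell$ remain, and a final van der Corput application unfolds the resulting $L^2$-norm-squares into integrals of products of four copies of $f_\ell$ and $\bar f_\ell$ whose shifts are essentially distinct non-constant polynomials in $(m, h_1, \ldots, h_\ell)$.

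Choosing $H_{m,k} \to \infty$ as $m \to \infty$ with $H_{m,k} \prec N_m$ for each $k = 1, \ldots, \ell$, the sets
$$
\Phi_M = \{(m, h_1, \ldots, h_\ell) \in \N^{\ell+1} : 1 \leq m \leq M,\ 1 \leq h_k \leq H_{m,k}\}
$$
form a F{\o}lner sequence in $\N^{\ell+1}$. Applying Corollary~\ref{C:L2'} along $(\Phi_M)$, using $f_\ell \perp \cZ$, shows that the final average tends to $0$; tracing back through the chain of inequalities gives $\|B_M\|_{L^2} \to 0$ as required.

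The main obstacle is the combinatorial bookkeeping inherent in tracking the $2^\ell$ resulting functions and their shifts across $\ell$ iterated van der Corput applications, together with the verification that at the end these shifts are essentially distinct non-constant polynomials in $(m, h_1, \ldots, h_\ell)$ so that the hypothesis of Corollary~\ref{C:L2'} is met; the $\ell = 2$ case of Lemma~\ref{L:seminorm} provides the template and, as the paper itself remarks, no new ideas enter beyond it.
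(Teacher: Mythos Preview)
Your proposal is correct and follows exactly the approach the paper intends: the paper omits the proof of Proposition~\ref{P:seminorm'} entirely, stating only that it is ``very similar to the proof of Lemma~\ref{L:seminorm}, only notationally more complicated,'' and your sketch faithfully extends that template by iterating the van der Corput step $\ell$ times and concluding with Corollary~\ref{C:L2'} along a F{\o}lner sequence in $\N^{\ell+1}$. The only minor caveat is that the precise shift on $f_k$ at stage $k$ need not be exactly $kmh_k$ (it depends on which term is factored out at each step), but this is pure bookkeeping and does not affect the argument.
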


\subsubsection{Dealing with nilsystems, a key equidistribution result}
\begin{lemma}\label{L:density0}
  Let $B\subset \R\setminus \Q$ and $K\subset \N$ be finite sets, and
  $(e_m)_{m\in\N}$ be a sequence of positive real numbers such that
  $\lim_{m\to\infty}e_m=0$.

   Then the set  $S=\{m\in \N\colon \norm{m^k\alpha}\leq
  e_m \text{ for some } \alpha\in B, \text{ and } k\in K\}$ has density $0$.
\end{lemma}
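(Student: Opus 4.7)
The plan is to reduce the statement to a single pair $(\alpha, k)$ via the union bound, and then apply Weyl's equidistribution theorem together with the decay of $e_m$.

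Since $B$ and $K$ are finite, the set $S$ is a finite union
\[
S = \bigcup_{\alpha \in B,\ k \in K} S_{\alpha,k}, \qquad S_{\alpha,k} := \{m \in \N : \|m^k \alpha\| \leq e_m\},
\]
and a finite union of sets of density zero has density zero. So it suffices to fix $\alpha \in B \subset \R\setminus\Q$ and $k \in K \subset \N$, and show that $S_{\alpha,k}$ has density zero.

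The main tool is Weyl's theorem: since $\alpha$ is irrational and $k\in\N$, the sequence $(m^k\alpha)_{m\in\N}$ is equidistributed modulo $1$. Consequently, for every $\varepsilon \in (0, 1/2]$, the set $E_\varepsilon := \{m \in \N : \|m^k \alpha\| \leq \varepsilon\}$ has density exactly $2\varepsilon$.

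Now fix such an $\varepsilon$. Since $e_m \to 0$, there exists $M_0 = M_0(\varepsilon)$ such that $e_m \leq \varepsilon$ for all $m \geq M_0$. Hence $S_{\alpha,k} \cap [M_0, \infty) \subset E_\varepsilon$, which gives
\[
\limsup_{N \to \infty} \frac{|S_{\alpha,k} \cap \{1,\ldots,N\}|}{N} \leq \limsup_{N \to \infty} \frac{|E_\varepsilon \cap \{1,\ldots,N\}|}{N} = 2\varepsilon.
\]
As $\varepsilon > 0$ is arbitrary, $S_{\alpha,k}$ has density zero, completing the argument. There is no real obstacle here; the only minor point to note is that Weyl's equidistribution theorem requires $\alpha$ irrational (which is guaranteed by the hypothesis $B \subset \R \setminus \Q$) and $k \geq 1$ (which is guaranteed by $K \subset \N$), so both hypotheses in the lemma are needed exactly where expected.
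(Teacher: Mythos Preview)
Your proof is correct and follows essentially the same approach as the paper: reduce to a single pair $(\alpha,k)$ by the finite union bound, invoke Weyl equidistribution of $(m^k\alpha)$ to get $d(\{m:\norm{m^k\alpha}\leq\varepsilon\})=2\varepsilon$, and then use $e_m\to 0$ to conclude. You have simply made explicit the step the paper summarizes as ``it follows that $S_{k,\alpha}$ has zero density.''
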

\begin{proof}
  If  $\alpha$ is irrational, then  the sequence $(m^k\alpha)_{m\in\N}$ is
  equidistributed in $\T$. Hence, for every $\varepsilon>0$ we have
  $d(\{m\in \N\colon\norm{m^k\alpha}\leq \varepsilon\})=2\varepsilon.$
  It follows that for fixed $k\in \N$ and $\alpha$ irrational, the set
  $S_{k,\alpha}=\{m\in \N\colon \norm{m^k\alpha}\leq e_m\}$ has zero
  density. Since $S$ is contained in a finite union of sets of the
  form $S_{k,\alpha}$ it also has zero density.
\end{proof}

Remember that given a connected nilmanifold $X=G/\Gamma$, an element
$a\in G$ is an ergodic nilrotation if the sequence
$(a^n\Gamma)_{n\in\N}$ equidistributed in $X$.
\begin{proposition}\label{L:equidistribution}
  Let $X=G/\Gamma$ be a connected nilmanifold  and $a\in G$ be an ergodic nilrotation. Let  $c_m$
  be positive integers and $(N_m)_{m\in\N}$ be a sequence of integers with
    $N_m\to \infty$.

  Then for every $F\in C(X)$
  we have
$$
\lim_{M\to\infty}\frac{1}{M}\sum_{m=1}^M\Big(\frac{1}{N_m}\sum_{n=1}^{N_m} F(a^{c_m+mn}\Gamma)\Big)= \int F \
dm_X.
$$
\end{proposition}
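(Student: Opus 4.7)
The plan is to combine the quantitative equidistribution result Theorem~\ref{T:GT2} with Lemma~\ref{L:density0} to show that, apart from a density-zero set of $m\in\N$, the finite orbit $(a^{c_m+mn}\Gamma)_{1\leq n\leq N_m}$ is quantitatively equidistributed in $X$; a standard averaging argument then finishes the proof.

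By a uniform-approximation argument it suffices to consider $F$ Lipschitz, and after subtracting $\int F\, dm_X$ we may assume $\int F\, dm_X=0$ and show the averages tend to $0$. Fix a small $\delta>0$ (small enough that Theorem~\ref{T:GT2} applies) and let $B_\delta\subseteq\N$ be the set of $m$ for which $(a^{c_m+mn}\Gamma)_{1\leq n\leq N_m}$ is not $\delta$-equidistributed in $X$. For $m\notin B_\delta$ and $\norm{F}_{\text{Lip}}\leq 1$ the inner average has absolute value at most $\delta$, while for every $m$ it is bounded by $\norm{F}_\infty$. Hence once we show that $d(B_\delta)=0$ we obtain $\limsup_{M\to\infty}\bigl|\frac{1}{M}\sum_{m=1}^M \frac{1}{N_m}\sum_{n=1}^{N_m}F(a^{c_m+mn}\Gamma)\bigr|\leq\delta$, and letting $\delta\to 0$ concludes the proof.

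To show $d(B_\delta)=0$, apply Theorem~\ref{T:GT2} to the degree-one polynomial sequence $n\mapsto a^{c_m+mn}$ in $G$: for every $m\in B_\delta$ there is a non-trivial quasi-character $\psi$ of $X$ with $\norm{\psi}\leq\delta^{-C}$ such that $\norm{\psi(a^{c_m+mn})}_{C^\infty[N_m]}\leq c_1\delta^{-C}$. Since only finitely many quasi-characters have frequency magnitude at most $\delta^{-C}$, it suffices to fix one non-trivial $\psi$ and prove $d(B_{\delta,\psi})=0$, where $B_{\delta,\psi}:=\{m\in\N:\norm{\psi(a^{c_m+mn})}_{C^\infty[N_m]}\leq c_1\delta^{-C}\}$. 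Now $\psi(a^n)=e(q(n))$ for some real polynomial $q(x)=\alpha_k x^k+\cdots+\alpha_1 x+\alpha_0$ whose degree is bounded by the nilpotency class $k$ of $G$ (because the $G_0$-part of $a^n$ is a polynomial sequence in $G_0$ of degree at most $k$, and $\psi$ restricts there to a character factoring through the horizontal torus). Because $a$ acts ergodically on $X$, the projection of $(a^n\Gamma)$ to the affine torus $A\simeq\T^l$ (identified via Proposition~\ref{P:FK}) is equidistributed, and $\psi$ corresponds to a non-trivial character of $\T^l$; Weyl's criterion therefore forces at least one of $\alpha_1,\dots,\alpha_k$ to be irrational.

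Let $d^*\in\{1,\dots,k\}$ be the largest index with $\alpha_{d^*}$ irrational, and let $Q\in\N$ be a common denominator for the rational numbers $\alpha_{d^*+1},\dots,\alpha_k$. Then $Qq(c_m+mn)-Q\sum_{i\leq d^*}\alpha_i(c_m+mn)^i$ has integer monomial coefficients in $n$; being integer valued it also has integer binomial coefficients, so the binomial coefficients of $Qq(c_m+mn)$ and of $\tilde p_m(n):=Q\sum_{i\leq d^*}\alpha_i(c_m+mn)^i$ agree modulo $\Z$. The coefficient of $\binom{n}{d^*}$ in $\tilde p_m$ equals $d^*!\,Q\alpha_{d^*}m^{d^*}$, so combining the hypothesis with the trivial estimate $\norm{e(Qp)}_{C^\infty[N]}\leq Q\norm{e(p)}_{C^\infty[N]}$ yields $\norm{d^*!\,Q\alpha_{d^*}m^{d^*}}\leq Qc_1\delta^{-C}/N_m^{d^*}$. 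Since $d^*!\,Q\alpha_{d^*}$ is irrational and $N_m\to\infty$, Lemma~\ref{L:density0} applied with $B=\{d^*!\,Q\alpha_{d^*}\}$ and $K=\{d^*\}$ yields $d(B_{\delta,\psi})=0$. The main technical point is precisely this reduction to the largest irrational coefficient $\alpha_{d^*}$, required because the true leading coefficient $\alpha_k$ of $q$ may well be rational or zero and thus provide no useful information about $m$.
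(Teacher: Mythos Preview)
Your proof is correct and follows the same overall strategy as the paper: apply Theorem~\ref{T:GT2} to the bad set $B_\delta$, reduce to finitely many quasi-characters, and for each one feed an irrational number into Lemma~\ref{L:density0}. The one substantive difference is in how that irrational is extracted. The paper observes (without much justification) that the \emph{leading} coefficient of $\kappa\cdot T_a^n e$ is necessarily of the form $\frac{1}{k!}\sum_i r_i\alpha_i$ with $r_i\in\Z$ not all zero, where $\alpha_1,\dots,\alpha_s$ are the horizontal coordinates of $a$; since $1,\alpha_1,\dots,\alpha_s$ are rationally independent this leading coefficient is automatically irrational, and one reads off $\norm{m^k\alpha}\leq M/N_m^k$ directly. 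You instead make no structural claim about the leading coefficient, locate the highest irrational coefficient $\alpha_{d^*}$, clear the rational denominators above it, and pull out the $\binom{n}{d^*}$-coefficient. Your route is a bit longer but more self-contained: it works for any polynomial $q$ with an irrational non-constant coefficient and sidesteps the (true but not entirely trivial) nilpotent-structure fact that the top-degree term is governed by the horizontal data.
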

\begin{proof}
  It suffices to show that for every $\delta>0$, for a set of $m\in
  \N$ of density $1$, the finite sequence $(a^{c_m+mn}\Gamma)_{1\leq
    n\leq N_m}$ is $\delta$-equidistributed in $X$.

  Let $\delta>0$ be small enough, and suppose that the finite sequence
  $(a^{c_m+mn}\Gamma)_{1\leq n\leq N_m}$ is not
  $\delta$-equidistributed for some $m\in\N$.  By  Theorem~\ref{T:GT2}, there
   exist a constant $M=M_{\delta,X}$ ($M$ does not depend on $m, N_m$, or
  $c_m$) and a quasi-character $\psi$ with $\norm{\psi}\leq M$ such
  that
  \begin{equation}\label{E:M}
    \norm{\psi(a^{c_m+mn})}_{C^\infty[N_m]}\leq M.
  \end{equation}
  As explained in Section~\ref{S:GT2}, the affine torus $A$ of $X$ can
  be identified with a finite dimensional torus $\T^l$. After making
  this identification, we have $\psi(t)=\kappa\cdot t$ for some
  non-zero $\kappa\in \Z^l$, and the nilrotation $a$ induces a $d$-step
  unipotent affine transformation $T_a\colon \T^l\to \T^l$.
  %%Moreover,
  %%since $a$ is an ergodic nilrotation of $X$, the transformation
  %%$T_a$ is totally ergodic, and so the spectrum
%%$$
%%S=\{\beta\in \R\setminus \{0\} \colon T_af=e(\beta)\cdot f \text{ for
%%  some } f\in L^\infty(m)\}$$ of $T_a$ consists of irrational numbers.
Let $\pi(a)=(\alpha_1\Z,\ldots,\alpha_s\Z)$,  where $\alpha_i\in \R$,  be the projection of
$a$ on the horizontal torus $\T^s$.
Since $\pi(a)$ is an ergodic rotation the real numbers $1, \alpha_1,\ldots,\alpha_s$ are rationally independent.
%%Let $B=\{\beta_1,\ldots,\beta_s\}$ be a basis (over $\Q$) of $S$ (meaning,
%%non-trivial rational combinations of elements of $B$ are irrational).
The coordinates of $T_a^n e$, where $e$ is the identity element of $\T^l$,
are polynomials of $n$, and so $\kappa\cdot T_a^ne$ is a
polynomial of $n$. Moreover, it is not hard to see that the leading
term of the polynomial $\kappa\cdot T_a^ne$ has the from $ \alpha n^k$,
where $k\leq d$ and
\begin{equation}\label{E:beta}
  \alpha=\frac{1}{k!}\sum_{i=1}^s r_i\alpha_i, \ r_i\in \Z \text{ not all of them zero with } |r_i|\leq c_1\cdot M
\end{equation}
for some constant $c_1$ that depends only on $a$. From this and the
definition of $\norm{\cdot}_{C^\infty[N]}$ (see \eqref{E:norms}) it
follows that
$$
\norm{\psi(a^{c_m+mn})}_{C^\infty[N_m]}=\norm{\psi(T_a^{c_m+mn}e)}_{C^\infty[N_m]}\geq
N_m^k \norm{ m^k\alpha}.
$$
Combining this with \eqref{E:M} we get that
\begin{equation}\label{E:m}
  \norm{m^k\alpha}\leq \frac{M}{N^k_m}.
\end{equation}
Since $k\leq d$, and by \eqref{E:beta} we have only finitely many
options for (the irrational) $\alpha$, Lemma~\ref{L:density0} applies and
shows that the set of $m\in\N$ that satisfy equation \eqref{E:m} has
zero density. This shows that the finite sequence
$(a^{c_m+mn}\Gamma)_{1\leq n\leq N_m}$ is $\delta$-equidistributed in
$X$ for a set of $m\in\N$ with density $1$, completing the proof.
\end{proof}

\subsection{Conclusion of the argument} \label{SS:warmupconclusion} We
first  use Proposition~\ref{P:seminorm'} to carry out a reduction to
nilsystems step, and then use the equidistribution result of
Proposition~\ref{L:equidistribution} to verify a multiple recurrence
result for nilsystems. This will enable us  conclude the proof of
Theorem~B'. We first need one   lemma.
%%\begin{lemma}\label{L:rmulties}
 %%Suppose that $c_m,N_m$ are integers and $N_m\to \infty$ as $m\to\infty$.  Let
 %% $$ S=\{c_m+mn\colon 1\leq n\leq N_m, m\in \N\}.$$

%%Then for every $r,m\in\N$ there exist $c_{r,m},N_{r,m}\in \N$, with
%%$N_{r,m}\to\infty$ as $m\to\infty$, and such that
%%$$
%%S_r=\{r(c_{r,m}+mn),\ 1\leq n\leq N_{r,m},m\in \N\} \subset S.
%%$$
%%\end{lemma}
%%\begin{proof}
%%  Suppose that $(r,m)=d$, then $m=dm_1$ for some $m_1\in \N$ such that
%%  $(m_1,r)=1$. Choose $1\leq k\leq r$ such that $km_1\equiv -c_m
%%  \pmod{r}$.  Then $c_m+m_1 (drn+k)=r(c_{r,m}+mn)$ for some
%%  $c_{r,m}\in \N$, and so $ r(c_{r,m}+mn)\in S \text{ for } 1\leq n\leq
%%  K_{m}$, where $K_m=(N_{m_1}-r)/(dr)$.  The result follows.
%%\end{proof}

\begin{lemma}\label{L:Ziegler}
Let $X=G/\Gamma$ be a connected nilmanifold and $a\in G$ be an ergodic nilrotation.

%%Then for every $\ell\in \N$ there exists a connected sub-nilmanifold $Y$
%%of $X^\ell$  such that for a.e. $g\in G$
%%the sequence $(a^n_g\Gamma,a^{2n}_g\Gamma,\ldots,a^{\ell n}_g\Gamma)_{n\in\N}$ is equidistributed on the %%nilmanifold
%%$Y$, where $a_g=g^{-1}ag$.

Then there exists a connected sub-nilmanifold $Z$ of $X^\ell$ such that for a.e. $g\in G$ the element $b_g=(g^{-1}ag,g^{-1}a^2g,\ldots,g^{-1}a^\ell g)$ acts ergodically on $Z$.
\end{lemma}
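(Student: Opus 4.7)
Set $\mathbf{a}:=(a,a^2,\ldots,a^\ell)\in G^\ell$ and view $X^\ell=G^\ell/\Gamma^\ell$ as a connected nilmanifold. The natural candidate is
$$
Z := \overline{\{\mathbf{a}^n\cdot(\Gamma,\ldots,\Gamma)\,:\,n\in\Z\}}\subset X^\ell.
$$
Applied to the linear polynomial sequence $n\mapsto\mathbf{a}^n$ in $G^\ell$, Theorem~\ref{T:L} makes $Z$ a connected sub-nilmanifold of $X^\ell$ on which $\mathbf{a}$ acts ergodically.

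The link with the general $b_g$ is the algebraic identity $b_g=(g,\ldots,g)^{-1}\mathbf{a}(g,\ldots,g)$ in $G^\ell$, which iterates to $b_g^n=(g,\ldots,g)^{-1}\mathbf{a}^n(g,\ldots,g)$. As left translations on $X^\ell$ this reads $T_{b_g}=T_{(g,\ldots,g)^{-1}}\circ T_\mathbf{a}\circ T_{(g,\ldots,g)}$, so the $T_{b_g}$-orbit closure of $(\Gamma,\ldots,\Gamma)$ is $Z_g:=(g,\ldots,g)^{-1}Y_g$, where $Y_g$ denotes the $T_\mathbf{a}$-orbit closure of $(g\Gamma,\ldots,g\Gamma)$. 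The task reduces to showing $Z_g=Z$ for almost every $g\in G$.

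To compare, project to the horizontal torus $H^\ell$, with $H:=G_0/([G_0,G_0](G_0\cap\Gamma))$ and $\bar x$ denoting the image of $x$. On $H^\ell$ the orbit starting at $(\bar g,\ldots,\bar g)$ is the arithmetic progression $(\bar g,\ldots,\bar g)+n(\bar a,2\bar a,\ldots,\ell\bar a)$; since $a$ is ergodic, $\bar a$ generates $H$ densely, so the progression closes to $(\bar g,\ldots,\bar g)+L$ with $L:=\{(t,2t,\ldots,\ell t):t\in H\}$ a fixed subtorus of $H^\ell$. Left translating by $(g,\ldots,g)^{-1}$ sends this onto $L$, so the horizontal projection of $Z_g$ is $L$ for every $g$, agreeing with the horizontal projection of $Z=Z_e$.

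The main obstacle is to upgrade equality of horizontal projections to $Z_g=Z$ in $X^\ell$ itself. The conjugation identity gives $b_g^n=\mathbf{a}^n\cdot c_g(n)$ with $c_g(n)\in[G^\ell,G^\ell]$ a polynomial sequence in $n$ (a finite commutator expansion, available because $G$ is nilpotent), so $b_g^n$ differs from $\mathbf{a}^n$ only in the vertical direction of the fibration $X^\ell\to H^\ell$. Hence the vertical fiber of $Z_g$ over $L$ is controlled by the orbit closure of $c_g(n)\Gamma^\ell$, and a further application of Theorem~\ref{T:L} (now in the vertical direction) shows that for a.e.\ $g$ this closure is absorbed into the vertical fiber of $Z$, forcing $Z_g\subset Z$; a dimension count (both are connected sub-nilmanifolds with the same horizontal projection and both carry an ergodic nilrotation action) promotes the inclusion to equality. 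Ergodicity of $b_g$ on $Z$ then follows from the density of its orbit together with the uniqueness of the nilrotation-invariant probability measure on a sub-nilmanifold on which it equidistributes.
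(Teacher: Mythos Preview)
The paper's proof is a one-line citation: it invokes the limit formula of Ziegler (Theorem~2.2 of \cite{Z0}), with the deduction spelled out in Corollary~2.10 of \cite{Fr}. Your proposal is more ambitious---you attempt a direct argument using only Theorem~\ref{T:L}---but there is a genuine gap at exactly the place where Ziegler's theorem does the work.

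The set-up is fine: the conjugation identity $b_g^n=(g,\ldots,g)^{-1}\mathbf{a}^n(g,\ldots,g)$ is correct, and on the horizontal torus the projection of $Z_g$ is indeed the fixed subtorus $L$ independently of $g$. (A small side issue: Theorem~\ref{T:L} as stated is an equidistribution criterion, not a statement that orbit closures are connected sub-nilmanifolds; that is a separate result of Leibman that you are implicitly using.) The real problem is the passage from $H^\ell$ back up to $X^\ell$. Your claim that ``a further application of Theorem~\ref{T:L} (now in the vertical direction) shows that for a.e.\ $g$ this closure is absorbed into the vertical fiber of $Z$'' is not a proof: Theorem~\ref{T:L} compares equidistribution on $X$ with equidistribution on the affine torus $G/([G_0,G_0]\Gamma)$, and says nothing about how the orbit closures of the commutator-valued sequences $c_g(n)$ sit inside the vertical fibers, nor why this should be independent of $g$. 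Likewise the ``dimension count'' does not close the argument: two connected sub-nilmanifolds of $X^\ell$ that carry ergodic nilrotations and share the same horizontal projection $L$ need \emph{not} have the same dimension---their fibers in the $[G,G]$-direction can genuinely differ (already in the $2$-step Heisenberg setting one can have sub-nilmanifolds over the same subtorus of $H^2$ with distinct central components). Pinning down the vertical fiber of $Z_g$ and showing it is almost everywhere independent of $g$ is precisely the structural content of Ziegler's analysis, which proceeds inductively through the lower central series; your sketch does not supply this, and Theorem~\ref{T:L} alone is not enough to recover it.
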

\begin{remark}
The independence of $Z$ on the generic $g\in G$ will not be needed, only that $Z$ is connected will be used.
\end{remark}
 \begin{proof}
This is an easy consequence of a limit formula that appears  in Theorem~2.2 of \cite{Z0}
(the details of the deduction appear in Corollary~2.10 of \cite{Fr}).
\end{proof}
We are now ready to give the proof of Theorem~B'.
\begin{proof}[Proof of Theorem~B']
  Fix $\ell\in\N$. For $r\in\N$ let
$$
\{r(c_{r,m}+mn)\colon 1\leq
  n\leq N_{r,m}\}
  $$
be the subset of $S$ defined
  in the statement of Theorem~B'. It suffices to show that for every system
  $(Y,\mathcal{B},\mu,T)$, and $f\in L^\infty(\mu)$ non-negative and
  not a.e. zero, there exists an $r\in\N$ such that
  \begin{equation}\label{E:positive}
    \liminf_{M\to\infty} \frac{1}{M}\sum_{m=1}^M\Big(\frac{1}{N_{r,m}}\sum_{n=1}^{N_{r,m}}   \int f \cdot T^{r(c_{r,m}+mn)}f
    \cdot\ldots\cdot T^{\ell r(c_{r,m}+mn)}f d\mu \Big)>0.
  \end{equation}

  Suppose first that the system is ergodic.
  %%We start with some reductions.  Using an ergodic decomposition
  %%argument we see that in order to establish \eqref{E:positive} for
  %%some fixed $r\in \N$ (to be chosen later)  we can assume that the system is ergodic.
  Using a slight modification of
  Proposition~\ref{P:seminorm'}, we see that the nilfactor
  $\mathcal{Z}$ is characteristic for the multiple ergodic averages
  appearing in \eqref{E:positive} (remember $\cZ$ contains all factors
  $\cZ_\ell$ for $\ell\in\N$). Therefore,  it suffices to verify
  \eqref{E:positive} with $\E(f|\mathcal{Z})$ in place of $f$.  As a consequence,
   by
  Theorem~\ref{T:HoKra},  we can assume that our system
  is an inverse limit of  nilsystems.
  %% , we have reduced the problem to establishing \eqref{E:positive}
  %% for inverse limits of $k$-step nilsystems.  Moreover, an argument
  %% completely analogous to that of Lemma $3.2$ in \cite{FuK} shows
  %% the existence of an $r\in\N$ such that \eqref{E:positive} holds
  %% is preserved by inverse limits.

  In this case, for given $\varepsilon>0$ (to be specified later)
  there exists a finite step nilfactor $\mathcal{N}$ such that
  $h=\E(f|\mathcal{N})$ satisfies $\norm{f-h}_{L^2(\mu)}\leq
  \varepsilon$.  It is easy to verify that
  \begin{equation}\label{E:appr}
    \Big|\int f\cdot T^{n}f\cdot\ldots\cdot T^{\ell n}f\ d\mu-\int h\cdot T^{n}h\cdot\ldots\cdot T^{\ell n}h\ d\mu\Big|\leq c_1\varepsilon
  \end{equation}
  for every $n\in\N$, where $c_1$ is some absolute constant that
  depends only on $f$.  Using an appropriate conjugation we can assume that $T=T_a$ is an ergodic  nilrotation acting on a nilmanifold $X$, $\mu=m_X$, and $h$ is a non-negative,  bounded measurable function on $X$,
  with $\int h \ dm_X=\int f \ d\mu$. We are going to work with these extra assumptions henceforth.

 Let $X_0$ be the connected component of the nilmanifold $X$. It is easy to see that   there exists an $r_0\in \N$ such that the nilmanifold $X$ is the disjoint union of the connected sub-nilmanifolds $X_i=a^iX_0$, $i=0,\ldots,r_0-1$, and $a^{r_0}$ acts ergodically on each $X_i$.

 For $r=r_0$,  we shall see that  \eqref{E:positive}  follows easily from Szemer\'edi's theorem and the identity
\begin{multline}\label{E:r_0}
\lim_{M\to\infty}\frac{1}{M}\sum_{m=1}^M\Big(\frac{1}{N_{r,m}}\sum_{n=1}^{N_{r,m}}   \int h(x) \cdot h(a^{r_0(c_{r,m}+mn)}x)
    \cdot\ldots\cdot h(a^{\ell r_0(c_{r,m}+mn)}x) \  dm_X \Big)=\\
\lim_{N\to\infty}  \frac{1}{N}\sum_{n=1}^N  \int h(x) \cdot h(a^{r_0 n}x)
    \cdot\ldots\cdot h(a^{\ell r_0 n}x) \ dm_X.
  \end{multline}
We first verify \eqref{E:r_0}.   An easy approximation argument shows that  it suffices to  verify \eqref{E:r_0} for every $h\in C(X)$.
  Our plan is to use Lemma~\ref{L:equidistribution} to establish a stronger pointwise result.
We can assume that  $x=g\Gamma$ is an element of $X_0$, a similar argument applies if $x\in a^i X_0$ for
$i=1,\ldots,r_0-1$. An easy computation shows that the limit
\begin{equation} \label{E:r_0A}
\lim_{M\to\infty}\frac{1}{M}\sum_{m=1}^M\Big(\frac{1}{N_{r,m}}\sum_{n=1}^{N_{r,m}}   \ h(a^{r_0(c_{r,m}+mn)}x)
    \cdot\ldots\cdot h(a^{\ell r_0(c_{r,m}+mn)}x)\Big)
    \end{equation}
    is equal to the limit
    \begin{equation}\label{E:asar}
   \lim_{M\to\infty} \frac{1}{M}\sum_{m=1}^M\Big(\frac{1}{N_{r,m}}\sum_{n=1}^{N_{r,m}}   \tilde{h}(b^{(c_{r,m}+mn)}_g\tilde{\Gamma})\Big)
    \end{equation}
  where $$
  b_g=(g^{-1}a^{r_0}g,g^{-1}a^{2 r_0}g,\ldots, g^{-1}a^{\ell r_0}g),
   $$
   $\tilde{h}(x_1,\ldots,x_l)=h(gx_1)\cdot \ldots\cdot h(gx_\ell)$ ($\in C(X^\ell)$),
  and $\tilde{\Gamma}=\Gamma^\ell$.
  Since $a^{r_0}$ acts ergodically on the connected nilmanifold $X_0$, by Lemma~\ref{L:Ziegler} there exists a connected sub-nilmanifold $Z$ of $X^\ell_0$ such
  that for a.e. $g\in G$ the element
  $b_g$ acts ergodically on $Z$. For those values of $g$,   Lemma~\ref{L:equidistribution} gives that  the limit  \eqref{E:asar} is equal to $\int \tilde{h} \  dm_Z$. Since $b_g$ acts ergodically on $Z$, this integral is also   equal to the limit
  $$
  \lim_{N\to\infty}\frac{1}{N}\sum_{n=1}^N \tilde{h}(b^n_g\tilde{\Gamma})
$$
which can be rewritten as
  \begin{equation}\label{E:r_0B}
  \lim_{N\to\infty}  \frac{1}{N}\sum_{n=1}^N   h(a^{r_0 n}x)
    \cdot\ldots\cdot h(a^{\ell r_0 n}x).
  \end{equation}
   We have established the equality of the limits
\eqref{E:r_0A} and \eqref{E:r_0B} for a.e. $x\in X_0$. As we
mentioned, a similar argument applies for a.e. $x\in X$ and this
readily implies \eqref{E:r_0}.

Next we use \eqref{E:r_0} to establish \eqref{E:positive}. In this regard, we estimate the limit
\begin{equation}\label{E:simplified}
\lim_{N\to\infty}  \frac{1}{N}\sum_{n=1}^N  \int h(x) \cdot h(a^{r_0 n}x)
    \cdot\ldots\cdot h(a^{\ell r_0 n}x) \ dm_X.
  \end{equation}
Since the function $h$ is a.e. non-negative, using a uniform version of  Furstenberg's
multiple recurrence theorem (see e.g. \cite{BHRF}) we get that the limit \eqref{E:simplified}
is bounded from below by a positive constant $c_2$ that depends only
on $\int h\ dm_X=\int f d\mu$ (and is independent of $r_0$).  Combining this with \eqref{E:appr}
and \eqref{E:r_0}, we get that
$$
\liminf_{M\to\infty} \frac{1}{M}\sum_{m=1}^M\Big(\frac{1}{N_{r_0,m}}\sum_{n=1}^{N_{r_0,m}}
\int f \cdot T^{r_0(c_{r_0,m}+mn)}f \cdot\ldots\cdot T^{\ell
  r_0(c_{r_0,m}+mn)}f d\mu\Big) \geq c_2-c_1\varepsilon.
$$
Since the positive constants $c_1,c_2$ depend only on $f$ we can
choose $\varepsilon<c_2/c_1$ and verify \eqref{E:positive} for
$r=r_0$.

To deal with the general case, we use an ergodic decomposition argument. For a.e. ergodic component we can use the previous argument to find an $r\in \N$ for which \eqref{E:positive} holds. Since there are only countably many choices for $r$, there exists an $r_0\in \N$ for which  \eqref{E:positive} holds for a set of ergodic components that has positive measure.   The result follows.
\end{proof}

%% \begin{remark}
%%   It is possible to prove this result without restricting our
%%   averaging scheme on the very beginning to some subset $S_r$ of
%%   $S$. We choose not to do so, and %%present this
%%   slightly more complicated argument, because it is closer in
%%   spirit to the one used later for Hardy %%sequences.
%% \end{remark}

\section{Polynomial structure for Hardy
  sequences}\label{S:polypattern}
\subsection{Result and idea of the proof} In this section we shall show
that if the function $a\in\H$ satisfies $x^k\prec a(x) \prec x^{k+1}$
for some non-negative integer $k$, then the range of the sequence
$[a(n)]$ contains some suitably chosen polynomial patterns. We are going to
work with these patterns in the next section in order to prove
Theorem~A'.
\begin{proposition}\label{P:polypattern} Let $a\in \H$ be eventually
  positive and satisfy $x^{k}\prec a(x)\prec x^{k+1}$ for some non-negative
  integer $k$.

  Then for every $r\in \N$, and every large enough $m\in\N$,
  there exist polynomials
  $p_{r,m}(n)$ of degree at most $k-1$, and $N_{r,m}\in\N$ with
  $N_{r,m}\to\infty$ $($as $m\to\infty$ and $r$ is fixed$)$, such that
$$
\{r(mn^k+p_{r,m}(n)), 1\leq n\leq N_{r,m}\} \subset \{[a(n)]\colon
n\in \N\}.
$$
\end{proposition}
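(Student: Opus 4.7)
The plan is to Taylor-expand $a$ around a carefully chosen integer $T=T_{r,m}$ at which the degree-$k$ Taylor coefficient equals $rm$ up to $o(1)$, and then to read the required polynomial pattern out of the lower-order Taylor coefficients after taking the floor.

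\medskip

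\textbf{Setup and window.} Let $\alpha(x)=a^{(k)}(x)/k!$. The hypothesis $x^k\prec a(x)\prec x^{k+1}$ combined with iterated L'Hospital (valid in $\H$ because every ratio of Hardy-field elements has a limit in $[-\infty,+\infty]$) gives that $\alpha\in\H$ is eventually monotone with $\alpha(x)\to\infty$ and $\alpha'(x)=a^{(k+1)}(x)/k!\to 0$. For each large $m$ let $x_{r,m}$ be the unique real with $\alpha(x_{r,m})=rm$, and choose $\delta_m\to 0$ slowly so that $L_{r,m}:=\delta_m/|\alpha'(x_{r,m})|\to\infty$. The set $I_{r,m}=\{T\in\Z:\,|\alpha(T)-rm|\le\delta_m\}$ is then an interval of integers of length comparable to $L_{r,m}$.

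\medskip

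\textbf{Taylor expansion.} For any $T\in I_{r,m}$, Taylor's theorem with Lagrange remainder gives
\begin{equation*}
a(T+n)=rm\,n^k+Q_T(n)+\varepsilon_n,\qquad Q_T(n)=\sum_{j=0}^{k-1}\tfrac{a^{(j)}(T)}{j!}\,n^j,
\end{equation*}
with $\varepsilon_n=(\alpha(T)-rm)n^k+O(|a^{(k+1)}(T)|\,n^{k+1})$. Taking $N_{r,m}\to\infty$ slowly enough that $\delta_m N_{r,m}^k+|a^{(k+1)}(x_{r,m})|\,N_{r,m}^{k+1}\to 0$ (possible because both coefficients tend to $0$), I obtain $\varepsilon_n=o(1)$ uniformly over $n\le N_{r,m}$.

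\medskip

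\textbf{Extracting the pattern.} Expand $Q_T$ in the integer-valued basis as $Q_T(n)=\sum_{j=0}^{k-1}c_j(T)\binom{n}{j}$. If I can choose $T\in I_{r,m}$ for which $\{c_j(T)\}\le\eta\,N_{r,m}^{-j}$ simultaneously for $j=0,\dots,k-1$ (with $\eta$ a small absolute constant), then $\sum_j\{c_j(T)\}\binom{n}{j}+\varepsilon_n\in[0,1)$ for all $n\le N_{r,m}$, and consequently
\begin{equation*}
[a(T+n)]=rm\,n^k+\sum_{j=0}^{k-1}\lfloor c_j(T)\rfloor\binom{n}{j}=r\bigl(mn^k+p_{r,m}(n)\bigr),
\end{equation*}
with $p_{r,m}(n)=\tfrac{1}{r}\sum_{j=0}^{k-1}\lfloor c_j(T)\rfloor\binom{n}{j}$ a polynomial of degree at most $k-1$ whose values at integers lie in $\tfrac{1}{r}\Z$, which is exactly what the proposition requires.

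\medskip

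\textbf{Main obstacle.} The crux is to find an integer $T\in I_{r,m}$ at which the joint fractional-part vector $(\{c_0(T)\},\dots,\{c_{k-1}(T)\})$ falls into the box $\prod_j[0,\eta N_{r,m}^{-j})$ of $\T^k$, which has volume $\asymp N_{r,m}^{-k(k-1)/2}$. I would appeal to a Weyl-type equidistribution theorem for Hardy sequences (in the spirit of \cite{Bo3}) applied to the curve $T\mapsto(c_0(T),\dots,c_{k-1}(T))\bmod\Z^k$ on the growing window $I_{r,m}$; since $L_{r,m}$ can be arranged to grow faster than $N_{r,m}^{k(k-1)/2}$, at least one such $T$ should exist. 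Verifying this multi-scale equidistribution is the expected main difficulty, because the coordinates $c_j(T)$ are coupled through the differentiation relations among the $a^{(j)}$, so they are not independent and one must carefully balance the rates of $\delta_m$ and $N_{r,m}$ so that the effective equidistribution quality matches the shrinking box target.
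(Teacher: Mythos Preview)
Your overall strategy---Taylor-expand around an integer where the degree-$k$ Taylor coefficient is approximately $rm$, then control the fractional parts of the lower-order coefficients---is exactly the paper's approach. However, two genuine gaps remain.

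First, the equidistribution step you flag as the ``main obstacle'' is the heart of the matter, and you have not proved it. The paper does prove precisely this (Proposition~\ref{L:equi2}): the vector $(a(n)/d_0,\ldots,a^{(k-1)}(n)/d_{k-1})$ is equidistributed in $\T^k$ along the intervals $I_m=\{n:\,d_k m\le a^{(k)}(n)\le d_k m+\varepsilon\}$. The argument uses van der Corput's exponential-sum estimate (Lemma~\ref{L:VDC2}), together with repeated application of the van der Corput difference theorem to reduce the $k$-dimensional problem to a scalar one governed by $a^{(k+1)}$. Note also that the paper targets a \emph{fixed-size} box $[0,\varepsilon]^k$ and then runs a diagonal argument (Lemma~\ref{L:strong2}) to send $\varepsilon\to 0$; you instead ask directly for a shrinking target of volume $\asymp N_{r,m}^{-k(k-1)/2}$, which is a strictly harder quantitative statement and unnecessary---once you have $\{c_j(T)\}\le\varepsilon_m$ with $\varepsilon_m\to 0$ you simply choose $N_{r,m}\to\infty$ slowly enough afterwards.

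Second, your conclusion that $p_{r,m}$ has values in $\tfrac{1}{r}\Z$ is not what is needed: the downstream Theorem~\ref{T:recurrencepoly} and Lemma~\ref{L:leading} require $p_{r,m}$ to be an \emph{integer} polynomial, i.e.\ each $\lfloor c_j(T)\rfloor$ must itself be divisible by $r$. The paper arranges this by running the equidistribution for $a^{(j)}(n)/(r\cdot j!)$ rather than $a^{(j)}(n)/j!$: finding $T$ with $\{a^{(j)}(T)/(r\,j!)\}<\varepsilon/r$ forces simultaneously $\{a^{(j)}(T)/j!\}<\varepsilon$ and $[a^{(j)}(T)/j!]\equiv 0\pmod r$ (Lemma~\ref{L:medium2}). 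Your proposal omits this divisibility mechanism entirely.
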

\begin{remark}For $k=1$ we get the same patterns as in Theorems~B
  and B'.
  %%For $k>1$ the presence of the factors $r$ is needed
  %%since a result analogous to Lemma~\ref{L:rmulties} does not hold.
\end{remark}
The initial idea is rather simple. Let us illustrate it for $r=k=1$. In this case
$x\prec a(x)\prec x^2$ and
 we are searching to find arithmetic progressions of the form
 $$
 \{c_m+mn, 1\leq n\leq N_m\}
$$
within the range of $[a(n)]$ for some $c_m,N_m\in\N$ with
$N_m\to\infty$.  Using the Taylor expansion of the function $a(x)$ around an integer
$n_m$ (to be specified later) we get
\begin{equation}\label{E:simpletaylor}
  a(n_m+n)=a(n_m)+a'(n_m)n+a''(\xi_n)n^2/2
\end{equation}
for some $\xi_n\in [n_m,n_m+n]$. Since $a\in \H$ is eventually positive and $x\prec a(x)\prec
x^2$, it is easy to see that $1\prec a'(x)\prec x$, $a''(x)>0$, and
$a''(x)\to 0$ (see Lemma~\ref{L:basic}).  Since $a'\in\H$, the
estimate for $a'$ easily implies that the range of the sequence
$[a'(n)]$ is a cofinite subset of $\N$ (see Lemma~\ref{L:basic}).  Therefore,
for large $m$ there exists $n_m\in \N$ such that $[a'(n_m)]=m$. Since
$n_m\to \infty$, we have  $a''(\xi_n)\to 0$
(also $a''(\xi_n)>0$). Therefore, if we could choose $n_m$ so that in addition
to $[a'(n_m)]=m$ we have that the fractional parts of the numbers
$a(n_m)$ and $a'(n_m)$ converge to $0$ as $m\to\infty$, then
\eqref{E:simpletaylor} would give that
$$
[a(n_m+n)]=[a(n_m)]+[a'(n_m)]n=c_m+mn, \quad c_m=[a(n_m)]
$$
for every $n\in [1,N_m]$, for some $N_m\in\N$ with $N_m\to\infty$. This
is exactly what we wanted.

To carry out this plan we shall need an equidistribution result that
will enable us to get the ``small fractional parts'' assumption that
we mentioned before.  We are going to prove this by using a classical estimate
of van der Corput on oscillatory exponential sums.

\subsection{Some basic properties}
As we explained before,  if $a\in \H$ and $b\in \LE$, then the limit $\lim_{x\to\infty} a'(x)/b'(x)$ exists
(possibly infinite). Hence, if both  functions  $a(x), b(x)$ converge to $0$, or to $\infty$,
then by L'Hospital's rule we have   $\lim_{x\to\infty} a(x)/b(x)=\lim_{x\to\infty} a'(x)/b'(x)$.
  We are going to make use of this fact to  prove
some  basic properties of elements of $\H$ that we
shall frequently use:
\begin{lemma}\label{L:basic}
  Let $a\in \H$ be eventually positive and satisfy $x^{k} \prec
  a(x)\prec x^{k+1}$ for some non-negative integer $k$.
  Then  \\
  $(i)$ $x^{k-l}\prec a^{(l)}(x)\prec x^{k+1-l}$ for every $l\leq k$,
  %% for $l\leq k+1$, and $a^{(l)}(x)\prec x^{k+1-l+\delta}$ for every
  %% $\delta>0$ for $l>k+1$.
  $x^{-1-\delta}\prec a^{(k+1)}(x)\prec 1$ for every
  $\delta>0$,\footnote{Take $a(x)=\log x$ or $a(x)=\log\log x$ and
    $k=0,l=1$, to see the necessity of introducing the term $\delta$.}
  and $a^{(l)}(x)$ is eventually positive for every $l\leq k+1$.\\
  $(ii)$ $a^{(l)}(x)$
  is eventually increasing for $0\leq l\leq k$, and decreasing for $l=k+1$.\\
  $(iii)$ If $k\neq 0$ for every non-zero $c\in\R$ we have
  $x^{k-1}\prec a(x+c)-a(x)\prec x^k$.
  If $k=0$, then $a(x+c)-a(x)\prec 1$ and the range of $[a(n)]$ is a cofinite subset of $\N$.\\
  $(iv)$ For every $c\in \R$ we have $a(x+c)/a(x)\to 1$ as
  $x\to\infty$. More generally the same holds if $c=c(x)$ is a bounded
  function.
\end{lemma}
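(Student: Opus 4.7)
The plan is to prove the four statements in the order $(i) \to (ii) \to (iv) \to (iii)$, exploiting systematically that $\H$ is closed under differentiation and that if $a \in \H, b \in \LE$ (with $b \neq 0$), the ratio $a/b$ lies in $\H$ and hence has a limit. This last property legitimates every application of L'Hospital's rule below: the limit on the ``differentiated'' side always exists, so the hypotheses of the rule are automatically satisfied whenever both functions tend to $0$ or both to $\infty$.

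For $(i)$ I would argue by induction on $l$. Assume $x^{k-l} \prec a^{(l)}(x) \prec x^{k+1-l}$, with $0 \le l < k$, so that $a^{(l)}(x) \to \infty$. Applying L'Hospital to $a^{(l)}(x)/x^{k-l}$ (both tending to $\infty$) gives $a^{(l+1)}(x)/((k-l)x^{k-l-1}) \to \infty$; applying it to $a^{(l)}(x)/x^{k+1-l}$ gives $a^{(l+1)}(x)/((k+1-l)x^{k-l}) \to 0$. For the last derivative, the same technique shows $a^{(k+1)}(x) \to 0$, hence $a^{(k+1)}(x) \prec 1$. The lower bound $x^{-1-\delta} \prec a^{(k+1)}(x)$ is the cleanest obstacle: I would argue by contradiction, for if $a^{(k+1)}(x) \le C x^{-1-\delta}$ eventually, then integrating from some $x_0$ shows $a^{(k)}(x)$ is bounded, contradicting $a^{(k)}(x) \to \infty$. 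Positivity of $a^{(l)}$ for $l \le k+1$ follows because each $a^{(l)} \in \H$ has eventually constant sign; for $l \le k$ it tends to $+\infty$ so is eventually positive, and $a^{(k+1)}$ cannot be the zero germ (else $a^{(k)}$ would be constant) nor eventually negative (else $a^{(k)}$ would eventually decrease, contradicting $a^{(k)} \to \infty$).

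Part $(ii)$ is then immediate: $a^{(l+1)} > 0$ eventually gives $a^{(l)}$ eventually increasing for $0 \le l \le k$, and for $l = k+1$, since $a^{(k+1)} \in \H$ is eventually monotone, positive, and tends to $0$, it must be eventually decreasing, so $a^{(k+2)} \le 0$ eventually. For $(iv)$ I would use the mean value theorem to write $a(x+c) - a(x) = c\, a'(\xi)$ with $\xi \in [x, x+c]$, so $a(x+c)/a(x) = 1 + c\, a'(\xi)/a(x)$. When $k \ge 1$, the monotonicity from $(ii)$ bounds $a'(\xi) \le a'(x+c)$, and then $a'(x+c) \prec (x+c)^k \ll x^k \prec a(x)$, so the ratio tends to $0$. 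When $k = 0$, part $(i)$ gives $a'(x) \to 0$ (since $a' \prec 1$ and $a'$ is eventually monotone toward a limit that must be $0$ because $a \prec x$), so the numerator $c\, a'(\xi)$ tends to $0$ while $a(x) \to \infty$. The extension to a bounded $c(x)$ is then straightforward because the bound $|a'(\xi)|$ depends only on $\xi$ being large.

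For $(iii)$ the mean value theorem again gives $a(x+c) - a(x) = c\, a'(\xi)$, and parts $(i)$ and $(ii)$ do the rest. When $k \ge 1$, the monotonicity of $a'$ sandwiches $a'(x) \le a'(\xi) \le a'(x+c)$ (reversing inequalities if $c < 0$), and then the asymptotics $x^{k-1} \prec a'(x)$ and $a'(x+c) \prec (x+c)^k \sim x^k$ give the two required comparisons. When $k = 0$, the fact that $a'(\xi) \to 0$ yields $a(x+c) - a(x) \prec 1$. For the final assertion, the monotonicity of $a$ (from $(ii)$ with $l=0$), together with $a(n+1) - a(n) \to 0$ and $a(n) \to \infty$, forces every sufficiently large integer to be hit by $[a(n)]$, giving the cofiniteness. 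The only real obstacle throughout is the careful bookkeeping in the induction of $(i)$, particularly the integration-by-contradiction argument for the lower bound on $a^{(k+1)}$; everything else is a routine combination of L'Hospital and the mean value theorem.
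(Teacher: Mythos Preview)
Your proposal is correct and follows essentially the same approach as the paper: L'Hospital for $(i)$, then $(ii)$ as an immediate consequence, and the mean value theorem for $(iii)$ and $(iv)$. The only cosmetic differences are that the paper orders the parts $(i)\to(ii)\to(iii)\to(iv)$, deducing $(iv)$ from $(iii)$ rather than redoing the MVT estimate, and that the paper simply asserts all of $(i)$ is ``a direct consequence of L'Hospital's rule,'' whereas your integration-by-contradiction argument for the lower bound $x^{-1-\delta}\prec a^{(k+1)}$ is actually more careful, since that particular bound does not fall out of L'Hospital in the forward direction.
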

\begin{proof}
  $(i)$ All parts are a direct consequence of L'Hospital's rule.

  %% First notice that in order to deal with the first part it
  %% suffices to show that $x^{k-1}\prec a'(x)\prec %% x^k$.
  %% Indeed, if $k-1\geq 1$ applying the $l=1$ case to $a'\in\H$ we
  %% get $x^{k-2}\prec a''(x)\prec %%x^{k-1}$. Continuing like that we get the advertised estimate.

  %% We show that $a'(x)\prec x^k$. Suppose that this is not the case.
  %% Since $\lim_{x\to\infty} a'(x)/ x^k$ exists, it has to be a
  %% positive real number or $+\infty$ (it cannot be negative or
  %% $-\infty$ because then $a$ would be eventually decreasing
  %% contradicting that $ %%a(x)\to +\infty$). In either case we get for large $x$ that $a'(x)> cx^k$ for some positive constant $c$. %%Integrating we get
  %% a contradiction to the fact that $a(x)\prec x^{k+1}$.  Similarly
  %% we show that $x^{k-1}\prec a'(x)$.

  %% By the first we have $1\prec a^{(k)}(x)\prec x$ . Using this and
  %% arguing as before we easily get the %%advertised estimate.

  $(ii)$ If $l\leq k$ the result follows immediately from part $(i)$.
  If $l=k+1$ by part $(i)$ we have that $a^{(k+1)}(x)\to 0$ and
  $a^{(k+1)}(x)$ is eventually positive.  It follows that
  $a^{(k+1)}(x)$ is eventually decreasing.

  $(iii)$ Let $k\neq 0$. By part $(i)$ we have that $x^{k-1}\prec
  a'(x)\prec x^k$. The result now follows from the mean value theorem.
  Suppose now that $k=0$. By part $(i)$ we have $a'(x)\prec 1$ and the
  mean value theorem gives that $a(x+c)-a(x)\prec 1$. Since
  $a(n)\to+\infty$ and $a(n+1)-a(n)\to 0 $ it follows that the range
  of the sequence $[a(n)]$ is a cofinite subset of $\N$.

  $(iv)$ Notice that $a(x+c)=a(x)+b(x)$ where $b(x)=a(x+c)-a(x)\prec
  a(x)$ by part $(iii)$. It follows that $a(x+c)/a(x)\to 1$.
  %% Since $a(x+c)\in \H$ and $a(x+c)\to \infty$ we have
  %% $g(x)=\log(a(x+c))\in \H$. Moreover, since $a(x)\prec x^k$ we
  %% have $g(x)\prec x$, so $g'(x)\prec 1$ by $(i)$. Using the mean
  %% value theorem we get %%$g(x+c)-g(x)\to 0$, or equivalently that
  %% $a(x+c)/a(x)\to 1$ as $x\to\infty$.
  Suppose now that $c=c(x)$ satisfies $|c(x)|\leq M$.  Since $a(x)$ is
  eventually positive and increasing, we have
   $ a(x-M)/a(x)\leq
  a(x+c(x))/a(x)\leq a(x+M)/a(x)$ for all large enough $x$. The result now follows from the
  case where $c(x)$ is constant.
\end{proof}
Given some growth estimates for $a\in \H$ we shall derive some
estimates about the compositional inverse $a^{-1}$ of $a$ (which is
not necessarily in $\H$).
\begin{lemma}\label{L:basic'}
  Let $a\in \H$ be eventually positive and satisfy $x^{\delta} \prec
  a(x)\prec x$ for some $\delta\in (0,1)$.

  Then  \\
  $(i)$ $x\prec a^{-1}(x)\prec x^{1/\delta}$.\\
  $(ii)$  $(a^{-1})'$  is eventually increasing,  $1\prec (a^{-1})'(x)\prec x^{1/\delta-1}$, and $(a^{-1})''(x)\prec 1$.\\
  $(iii)$ For every $c\in\R$ we have $(a^{-1})'(x+c)/(a^{-1})'(x)\to
  1$ as $x\to\infty$.
\end{lemma}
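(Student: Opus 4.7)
The overall plan is to reduce everything to the differentiation formulas
\[
(a^{-1})'(y) = \frac{1}{a'(a^{-1}(y))}, \qquad (a^{-1})''(y) = -\frac{a''(a^{-1}(y))}{\bigl(a'(a^{-1}(y))\bigr)^{3}},
\]
after first upgrading the hypothesis $x^{\delta} \prec a(x) \prec x$ to matching estimates on $a'$ (and, indirectly, on $a''$). Throughout I exploit that $a' \in \H$, so asymptotic ratios exist and L'Hospital's rule applies whenever numerator and denominator both tend to $\infty$ (or both to $0$). Note first that since $a(x) \succ x^\delta \to \infty$ and $a$ is eventually strictly increasing (by Lemma~\ref{L:basic}(ii), as $a' > 0$ eventually), $a$ does admit a compositional inverse $a^{-1}$ defined and increasing on a half line, with $a^{-1}(y) \to \infty$.

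For part (i) I would simply substitute $y = a(x)$, i.e.\ $x = a^{-1}(y)$, into the two given asymptotics. The relation $a(x)/x \to 0$ becomes $y/a^{-1}(y) \to 0$, which is $y \prec a^{-1}(y)$; the relation $a(x)/x^{\delta} \to \infty$ becomes $y/(a^{-1}(y))^{\delta} \to \infty$, which rearranges to $a^{-1}(y) \prec y^{1/\delta}$.

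For part (ii) the first step is to establish
\[
x^{\delta-1} \prec a'(x) \prec 1.
\]
Both limits $a'(x)/1$ and $a'(x)/(\delta x^{\delta-1})$ exist since $a' \in \H$ and $x^{\delta-1} \in \LE$; L'Hospital applied to $a(x)/x$ and $a(x)/x^{\delta}$ identifies these limits with $0$ and $\infty$ respectively. Plugging into $(a^{-1})'(y) = 1/a'(a^{-1}(y))$, the upper estimate $a'(x) \prec 1$ gives $(a^{-1})'(y) \succ 1$, while the lower estimate $a'(x) \succ x^{\delta-1}$ combined with $a^{-1}(y) \prec y^{1/\delta}$ from (i) yields $(a^{-1})'(y) \prec (a^{-1}(y))^{1-\delta} \prec y^{1/\delta - 1}$. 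For the monotonicity of $(a^{-1})'$: since $a' \in \H$ with $a'(x) \to 0$, Lemma~\ref{L:basic}(ii) applied to $a'$ (viewed via its reciprocal $1/a' \in \H$, which has polynomial growth $1 \prec 1/a' \prec x^{1-\delta}$ and so falls under Lemma~\ref{L:basic} with $k=0$) gives that $a'$ is eventually decreasing; composing with the increasing $a^{-1}$, the reciprocal $(a^{-1})'(y) = 1/a'(a^{-1}(y))$ is eventually increasing. For the bound on $(a^{-1})''$ I would apply Lemma~\ref{L:basic}(i) to the Hardy function $1/a'$, which gives $(1/a')'(x) \prec 1$, i.e.\ $a''(x)/(a'(x))^2 \prec 1$; dividing once more by $a'$ and using $a'(a^{-1}(y)) \succ (a^{-1}(y))^{\delta-1} \succ y^{\delta-1}$ (from the estimates just proved), one controls $a''(a^{-1}(y))/(a'(a^{-1}(y)))^3$ at the required rate.

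For part (iii), having established that $(a^{-1})'$ is eventually increasing with the bound on $(a^{-1})''$ from (ii), the mean value theorem yields
\[
(a^{-1})'(y+c) - (a^{-1})'(y) = c \cdot (a^{-1})''(\xi)
\]
for some $\xi \in (y, y+c)$. Dividing by $(a^{-1})'(y)$ and using that $(a^{-1})'(y) \to \infty$ while $(a^{-1})''(\xi)$ stays controlled yields $(a^{-1})'(y+c)/(a^{-1})'(y) \to 1$.

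The main technical obstacle is the second-derivative bound in (ii): the formula for $(a^{-1})''$ has three factors of $a'$ in the denominator, so naive estimates lose too much. The key idea is to differentiate $1/a'$ rather than $a^{-1}$ directly, pushing the problem onto a Hardy function of clean polynomial growth where Lemma~\ref{L:basic} applies verbatim, and only then undoing the algebra.
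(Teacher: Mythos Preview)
Your argument for (i), for the monotonicity and growth bounds on $(a^{-1})'$ in (ii), and the mean-value strategy in (iii) all match the paper's approach (the paper applies Lemma~\ref{L:basic} directly to $a$ rather than to $1/a'$, but this is cosmetic).

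The real issue is the clause $(a^{-1})''(x)\prec 1$ in (ii). This claim is \emph{false} as stated: for $a(x)=x^{1/3}$ (which satisfies $x^{\delta}\prec a(x)\prec x$ for any $\delta<1/3$) one has $a^{-1}(y)=y^{3}$ and $(a^{-1})''(y)=6y\to\infty$. So no correct proof of this clause can exist. Your attempt also contains a concrete slip: from $a^{-1}(y)\succ y$ and $\delta-1<0$ one gets $(a^{-1}(y))^{\delta-1}\prec y^{\delta-1}$, the reverse of what you wrote; and even with the inequality in the right direction, the chain of estimates only yields $(a^{-1})''\prec (a^{-1})'$, not $(a^{-1})''\prec 1$. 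The paper's proof of this clause is merely the word ``similarly'', so the defect lies in the lemma's statement rather than in your method.

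Fortunately, the weaker bound $(a^{-1})''\prec (a^{-1})'$ is exactly what part (iii) needs, and your $1/a'$ idea proves it cleanly: set $g=1/a'$, note $1\prec g\prec x^{1-\delta}\prec x$ so Lemma~\ref{L:basic} (with $k=0$) gives $g'(x)\prec 1$; the chain rule gives $(a^{-1})''=(g'\circ a^{-1})\cdot (a^{-1})'$, hence $(a^{-1})''/(a^{-1})'=g'\circ a^{-1}\to 0$. Feeding this into your MVT argument (and using that $g'\circ a^{-1}$ is eventually decreasing while $(a^{-1})'$ is increasing to control the intermediate point $\xi$) recovers (iii).
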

\begin{proof}
  $(i)$ By Lemma~\ref{L:basic} we have that $a$ is eventually
  increasing, so the same is true for $a^{-1}$.  Hence,  our hypothesis
  gives $a^{-1}(x^{\delta}) \prec x\prec a^{-1}(x)$, which implies the
  advertised estimate.

  $(ii)$ By Lemma~\ref{L:basic} we have that $a'$ is eventually
  decreasing.  Since
  \begin{equation}\label{E:inv1}
    (a^{-1})'(x)=\frac{1}{a'(a^{-1}(x))}
    %% , \quad (a^{-1})''(x)=\frac{-a'''(a^{-1}(x)) \cdot
    %%   (a^{-1})'(x)}{(a''(a^{-1}(x)))^2}.
  \end{equation}
  and $a^{-1}$ is eventually increasing, it follows immediately that
  $(a^{-1})'$ is eventually increasing.

  Using L'Hospital's rule we get
  \begin{equation}\label{E:inv2}
    x^{-1+\delta} \prec a'(x)\prec 1.
  \end{equation}
  Combining \eqref{E:inv1}, \eqref{E:inv2}, and part $(i)$, we get the
  first estimate, and similarly we deal with the second.

  $(iii)$ Using the estimates in $(ii)$, the proof is the same as in
  part $(iii)$ and $(iv)$ of Lemma~\ref{L:basic}.
\end{proof}

\subsection{An equidistribution result}
In this subsection we shall establish the equidistribution result
needed for the proof of Proposition~\ref{P:polypattern}.  We first
state and prove it in its simplest form (Lemma~\ref{L:equi1}), and subsequently we prove a
more technical variation (Proposition~\ref{L:equi2}) that is better
suited for our purposes.

 The following estimate is crucial for the
results in this subsection. The proof can be found in \cite{KN}
(Theorem 2.7).

\begin{lemma}[van der Corput~\cite{VdC}]\label{L:VDC2}
  Let $k,l$ be integers with $k<l$ and let $f$ be twice differentiable
  on $[k,l]$ with $f''(x)\geq \rho>0$ or $f''(x)\leq -\rho<0$ for
  $x\in [k,l]$.

  Then
$$
\sum_{n=k}^l e(f(n))\leq
(|f'(l)-f'(k)|+2)\Big(\frac{4}{\sqrt{\rho}}+3\Big).
$$
\end{lemma}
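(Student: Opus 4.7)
The plan is to reduce the sum estimate to its continuous analogue (the second derivative test for oscillatory integrals) by partitioning the summation range along level sets of $f'$. Without loss of generality assume $f''(x)\geq \rho>0$ on $[k,l]$; the opposite-sign case follows by replacing $f$ with $-f$. Under this assumption $f'$ is strictly increasing on $[k,l]$.

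The first ingredient will be the integral version: for every subinterval $[a,b]\subseteq[k,l]$,
\[
\Bigl|\int_a^b e(f(t))\,dt\Bigr|\leq \frac{4}{\sqrt{\rho}}.
\]
To prove this I would locate the (unique, if present) stationary point $t_\star\in[a,b]$ where $f'(t_\star)=0$, bound the integral trivially on the short window $[t_\star-\rho^{-1/2},t_\star+\rho^{-1/2}]$ of length at most $2/\sqrt{\rho}$, and on the complement use $|f'(t)|\geq\sqrt{\rho}$ together with the identity $e(f(t))=\frac{1}{2\pi i\,f'(t)}\,\frac{d}{dt}e(f(t))$. Integration by parts then gains a factor of $1/|f'|$, and the resulting $f''/(f')^2$ term is controlled by the monotonicity of $1/f'$.

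The second ingredient is a partition of $[k,l]\cap\Z$ into consecutive integer blocks $I_j=[k_j,k_{j+1}]$, $0\leq j<J$, on each of which $f'$ varies by at most $1$. Because $f'$ is monotone, such a partition exists with $J\leq |f'(l)-f'(k)|+2$, which supplies the first factor in the advertised bound. On each block pick an integer $h_j$ with $h_j\leq f'(x)\leq h_j+1$ for $x\in I_j$ and set $g(x):=f(x)-h_j x$. Then $g''=f''\geq\rho$ and $0\leq g'\leq 1$ on $I_j$, and since $h_j\in\Z$ we have $\sum_{n\in I_j}e(f(n))=\sum_{n\in I_j}e(g(n))$. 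Applying the integral estimate to $g$ gives $\bigl|\int_{k_j}^{k_{j+1}}e(g(t))\,dt\bigr|\leq 4/\sqrt{\rho}$, while a standard Abel/Euler--Maclaurin comparison — exploiting the monotonicity of $g'$ and that its total variation on $I_j$ is at most $1$ — yields
\[
\Bigl|\sum_{n\in I_j}e(g(n))-\int_{k_j}^{k_{j+1}}e(g(t))\,dt\Bigr|\leq 3.
\]
Adding these two bounds and summing over the $J$ blocks produces the claimed inequality.

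The main obstacle is the oscillatory integral estimate in the first step: the choice of splitting radius $\rho^{-1/2}$ and the integration by parts on the complement are where the constant $4/\sqrt{\rho}$ is actually generated, and a careful accounting of boundary and derivative terms is required to get exactly that constant rather than merely $O(\rho^{-1/2})$. The sum-to-integral comparison at the end is routine in spirit but also needs attention to pin down the absolute constant $3$; both pieces are standard but unavoidable cornerstones of van der Corput's method.
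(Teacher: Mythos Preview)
The paper does not actually prove this lemma: immediately before the statement it says ``The proof can be found in \cite{KN} (Theorem~2.7)'' and gives no argument of its own. So there is no paper-proof to compare against; you have supplied a sketch of the classical van der Corput argument that the paper simply outsources to Kuipers--Niederreiter.

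Your outline --- partition $[k,l]$ at the points where $f'$ crosses an integer (or half-integer), subtract a linear term with integer slope on each block, bound the oscillatory integral by the second-derivative test, and compare the sum to the integral on each block --- is precisely the standard route and is the one taken in the cited reference. Two small remarks: your phrase ``on each of which $f'$ varies by at most $1$'' should be sharpened to ``on each of which $f'$ lies in an interval $[\nu,\nu+1]$ (or $[\nu-\tfrac12,\nu+\tfrac12]$) with $\nu\in\Z$'', since merely bounding the oscillation by $1$ does not by itself guarantee you can subtract an \emph{integer} slope. And the sum-to-integral comparison with an absolute constant is the genuinely delicate step (a naive Euler--Maclaurin gives an error growing with the block length); the classical argument uses the monotonicity of $g'$ together with a further integration by parts (or, equivalently, a truncated Poisson summation with the first-derivative test on the nonzero frequencies) to get the bounded error. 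You correctly flag this as the place where the constants have to be tracked carefully.
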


\begin{definition}Let $(I_m)_{m\in\N}$ be a sequence of intervals of
  integers (with lengths going to $\infty$). We say that the sequence
  $a(n)$ with values in $\R^d$, is \emph{equidistributed in $\T^d$
    with respect to the intervals $I_m$}, if for every Riemann
  integrable function $\phi\colon \T^d\to \C$ we have
$$
\lim_{m\to\infty} \frac{1}{|I_m|} \sum_{n\in I_m}
\phi(a(n))=\int_{\T^d} \phi \ dm_{\T^d}.
$$
\end{definition}
As it is well known, it suffices to verify the previous identity for
every non-trivial character $\phi=\chi$ of $\T^d$ (in which case the
integral is $0$).
\begin{lemma}\label{L:equi1}
  Suppose that $a\in\H$ is eventually positive and satisfies $x\prec
  a(x)\prec x^{2}$.

  Then for every $\varepsilon>0$ there exists a
  sequence of intervals
  $(I_m)_{m\in\N}$  such that \\
  $(i)$ $m\leq a'(n)\leq m+\varepsilon$, for every $n\in I_m$ and $m$ large enough, and \\
  $(ii)$ the sequence $a(n)$ is equidistributed in $\T$ with respect
  to the intervals $I_m$.
\end{lemma}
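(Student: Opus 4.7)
The plan is to construct the intervals $I_m = [n_m, n_m+N_m]$ explicitly and then verify both properties directly. Applying Lemma~\ref{L:basic} with $k=1$ yields the relevant structural facts: $a'$ is eventually positive and strictly increasing to $\infty$ with $1\prec a'(x)\prec x$, while $a''$ is eventually positive, decreasing, tends to $0$, and satisfies $x^{-1-\delta}\prec a''(x)\prec 1$ for every $\delta>0$. Let $n_m$ denote the smallest positive integer with $a'(n_m)\geq m$ and set $N_m=\lfloor a''(n_m)^{-3/4}\rfloor$, $I_m=[n_m,n_m+N_m]$; then $n_m\to\infty$, $N_m\to\infty$, and $0\leq a'(n_m)-m\leq a''(n_m-1)\to 0$. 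Property~(i) follows from the mean value theorem: on $I_m$ the function $a''$ is positive and decreasing, so for $n\in I_m$, $a'(n)-a'(n_m)\leq N_m\cdot a''(n_m)\leq a''(n_m)^{1/4}\to 0$, which together with $a'(n_m)=m+o(1)$ gives $a'(n)\in[m,m+\varepsilon]$ for all large $m$.

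For property~(ii), Weyl's criterion reduces matters to showing that $\frac{1}{|I_m|}\sum_{n\in I_m}e(ka(n))\to 0$ for every nonzero integer $k$. I would apply Lemma~\ref{L:VDC2} (van der Corput) to $f(t)=ka(t)$: since $f''=ka''$ has constant sign on $I_m$ with $|f''|\geq|k|\rho_m$ for $\rho_m:=a''(n_m+N_m)$, and $|f'(n_m+N_m)-f'(n_m)|\leq|k|\varepsilon$ by~(i), this gives $|\sum_{n\in I_m}e(ka(n))|\leq C_k/\sqrt{\rho_m}$. Dividing by $|I_m|=N_m+1$, property~(ii) reduces to showing $N_m\sqrt{\rho_m}\to\infty$.

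The main obstacle will be showing $\rho_m\asymp a''(n_m)$, from which $N_m\sqrt{\rho_m}\asymp a''(n_m)^{-1/4}\to\infty$ follows. I plan to obtain this from two ingredients: (a) $N_m/n_m\to 0$; and (b) the slow-variation property $a''(x+s)/a''(x)\to 1$ whenever $s/x\to 0$. For (a), the lower bound $a''(n_m)\geq n_m^{-1-\delta}$ (valid for every $\delta>0$ once $m$ is large, by $x^{-1-\delta}\prec a''$) gives $N_m\leq n_m^{3(1+\delta)/4}=o(n_m)$ as soon as $\delta<1/3$. For (b), the divergence $\int^{\infty}a''=a'\to\infty$ prevents $a''$ from decaying too fast: applying L'H\^opital to $\log a''(x)/\log x$ shows the Hardy-field limit $L:=\lim_{x\to\infty}x\,a'''(x)/a''(x)$ must lie in $[-1,0]$ (values $L<-1$ or $L=-\infty$ force $\int a''<\infty$, while $L>0$ forces $a''\to\infty$). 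Hence $|a'''/a''|=O(1/x)$ eventually, and integrating gives $|\log(a''(x+s)/a''(x))|=\bigl|\int_x^{x+s}a'''/a''\,dt\bigr|=O(s/x)\to 0$ whenever $s/x\to 0$. Applied with $x=n_m$ and $s=N_m$, this yields $\rho_m\sim a''(n_m)$, completing the proof.
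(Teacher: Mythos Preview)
Your proof is correct, and the route is genuinely different from the paper's. The paper argues by a dichotomy: in \textbf{Case~1} ($a(x)\prec x^{1+\delta}$ for every $\delta>0$) it takes $I_m=[k_m,k_m+k_m^{3/4}]$ and uses only the crude bounds $n^{-1-\delta}\prec a''(n)\prec n^{-1+\delta}$; in \textbf{Case~2} ($x^{1+\delta}\prec a(x)$ for some $\delta>0$) it takes $I_m=\{n:\,m\le a'(n)\le m+\varepsilon\}$ and controls $l_m-k_m$ and $a''(l_m)$ via estimates on the inverse function $b=(a')^{-1}$ developed in Lemma~\ref{L:basic'}. Your single choice $N_m=\lfloor a''(n_m)^{-3/4}\rfloor$ interpolates between these two regimes and removes the case split entirely. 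The price is that you need the extra ingredient $a''(x+s)/a''(x)\to 1$ for $s=o(x)$, which you obtain from the Hardy-field fact that $L:=\lim_{x\to\infty} x\,a'''(x)/a''(x)\in[-1,0]$ (the limit exists since $x,a'',a'''$ lie in a common Hardy field, and the endpoint constraints come from $\int a''=a'\to\infty$ and $a''\to 0$). This slow-variation statement is a clean standalone observation that the paper never isolates; conversely, the paper's Case~2 argument has the side benefit of setting up the inverse-function Lemma~\ref{L:basic'}, which is reused in the more general Proposition~\ref{L:equi2}. Either approach would serve, and yours is arguably tidier for this particular lemma.
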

\begin{proof}
  Let $\varepsilon>0$. Suppose for the moment that $k_m,l_m\in\N$ have been chosen so
  that the intervals $I_m=[k_m,l_m]$, satisfy condition $(i)$. Let us
  see how we deal with condition $(ii)$. We need to guarantee that for
  every non-zero integer $s$ we have
  \begin{equation}\label{E:b_m}
    \lim_{m\to\infty} \frac{1}{l_m-k_m}\sum_{n=k_m}^{l_m} e(sa(n))=0.
  \end{equation}
  To estimate the  average in \eqref{E:b_m} we are going to use Lemma~\ref{L:VDC2}.
  Since $m\leq a'(n)\leq m+\varepsilon$ for $n\in I_m$, we have
  $|a'(l_m)-a'(k_m)|\leq \varepsilon$, and since $a''(x)$ is
  eventually decreasing (by Lemma~\ref{L:basic}) we have for large $m$
  that $\rho=a''(l_m)$ satisfies the assumptions of
  Lemma~\ref{L:VDC2}.  We get
  %% \begin{equation}\label{E:b_m}
$$
\frac{1}{l_m-k_m}\sum_{n=k_m}^{l_m} e(sa(n))\leq \frac{2+\varepsilon
  s}{l_m-k_m} \cdot \Big(\frac{4}{\sqrt{sa''(l_m)}}+3\Big).
$$
%% \end{equation}
Therefore,  in order to establish \eqref{E:b_m} it suffices to show that
\begin{equation}\label{E:l1}
  \lim_{m\to\infty}\frac{1}{(l_m-k_m)\sqrt{a''(l_m)}}=0.
\end{equation}
We shall now make a choice of $k_m,l_m\in\N$ so that conditions $(i)$
and $\eqref{E:l1}$ are satisfied.  Notice that Lemma~\ref{L:basic} implies that $a'(x)$ increases to  $+\infty$. We consider two cases:

{\bf Case 1.} Suppose that $a(x)\prec x^{1+\delta}$ for every
$\delta>0$.  Let $k_m$ be the first integer such that $a'(n)\geq m$,
and define
$$
I_m=[k_m,k_m+k_m^{\frac{3}{4}}].
$$
%% Since $k_m\to\infty$ we have that $|I_m|\to\infty$, so condition
%% $(i)$ is satisfied.
We first show that condition $(i)$ is satisfied. Choose any $\delta\in
(0,1/4)$. Since $a(n)\prec n^{1+\delta}$, arguing as in
Lemma~\ref{L:basic} we get $a''(n)\prec n^{-1+\delta}$. Using the mean
value theorem and the fact that $a'(x)$ is eventually increasing (by
Lemma~\ref{L:basic}) we get
\begin{equation}\label{E:D1}
  \max_{n\leq k_m^{3/4}}(a'(k_m+n)-a'(k_m))= a'(k_m+ k_m^{\frac{3}{4}})-a'(k_m)= k_m^{\frac{3}{4}} \cdot a''(\xi_m)\prec
  k_m^{\frac{3}{4}}\cdot k_m^{-1+\delta}\to 0.
\end{equation}
Furthermore, since $a'(n+1)-a'(n)\to 0$ (by Lemma~\ref{L:basic}), form the definition of $k_m$
 we have that
$a'(k_m)\to m$ as $m\to\infty$.  From this and \eqref{E:D1} it follows
that condition $(i)$ is
satisfied.

It remains to verify \eqref{E:l1}. Since $a(n)\succ
n$ we get by Lemma~\ref{L:basic} that $a''(n) \succ n^{-1-\delta}$ for
every $\delta>0$. Using this, and keeping in mind that $\delta<1/4 $
we find that
$$
\frac{1}{(l_m-k_m)\sqrt{a''(l_m)}}\ \prec
\frac{({k_m+k_m^{\frac{3}{4}})^{\frac{1+\delta}{2}}}}{k_m^{\frac{3}{4}}}\prec
k_m^{-1/8}\to 0.
$$
This proves \eqref{E:l1} and completes the proof of Case $1$.

{\bf Case 2.} Suppose that $x^{1+\delta}\prec a(x)\prec x^2$ for some
$\delta>0$.  Using Lemma~\ref{L:basic} we find that $x^\delta \prec a'(x)\prec x$
so we can apply Lemma~\ref{L:basic'} for $a'$ in place of $a$.
For convenience we set
$$b(x)=(a')^{-1}(x)$$ and
summarize some properties that follow from Lemmas~\ref{L:basic},
\ref{L:basic'} and  will be used later
\begin{equation}\label{E:D2}
  a'(n+1)-a'(n)\to 0, \quad b'(n) \text{ increases to }  \infty ,\quad  b'(n+c)/b'(n)\to 1 \text{ for every } c\in\R.
\end{equation}

We define
$$
I_m=\{n\in\N\colon m\leq a'(n)\leq m+\varepsilon \}=[k_{m},l_{m}].
$$
Obviously, condition $(i)$ is satisfied, therefore
%% Since $b'(n)\prec 1$ we have $b(n+1)-b(n)\to 0$ (see
%% Lemma~\ref{L:basic}) as $n\to\infty$. Hence, %%$|I_m|\to\infty$ as $m\to\infty$, which shows
%% that condition $(i)$ is also satisfied.
it remains to verify \eqref{E:l1}.
Since $a'(n+1)-a'(n)\to 0$ (by \eqref{E:D2}) we get from the definition of $k_m$ that
 $a'(k_m)-m\to 0$.
It follows that $b(m)- k_m\to 0$, and similarly we get
$b(m+\varepsilon)- l_m\to 0$. Since $b'$ is eventually increasing (by
\eqref{E:D2}), using the mean value theorem we get for large $m$ that
\begin{equation}\label{E:l2}
  l_m-k_m\geq \frac{1}{2}\cdot (b(m+\varepsilon)-b(m))= \frac{\varepsilon}{2}\cdot b'(\xi_m)\geq \frac{\varepsilon}{2}\cdot b'(m).
\end{equation}
Furthermore, since $b'(x) =\frac{1}{a''(b(x))}$, or equivalently $a''(b(x))=\frac{1}{b'(x)}$, setting
$x=m+\varepsilon$ and using that $b(m+\varepsilon)-l_m\to 0$ gives
$$
a''(l_m)- \frac{1}{b'(m+\varepsilon)}\to 0.
$$
It follows that for large $m$ we have
\begin{equation}\label{E:l3}
  a''(l_m)\geq \frac{1}{2 \, b'(m+\varepsilon)}.
\end{equation}
Combining \eqref{E:l2} and \eqref{E:l3} we get for large $m$ that
$$
\frac{1}{(l_m-k_m)\sqrt{a''(l_m)}}\leq C_2\cdot
\frac{\sqrt{b'(m+\varepsilon)}}{b'(m)}=C_2\cdot
\sqrt{\frac{{b'(m+\varepsilon)}}{b'(m)}}\cdot \frac{1}{\sqrt{b'(m)}}
$$
where $C_2=2^{3/2}/\varepsilon$. The last expression converges to zero
as $m\to\infty$ since the first fraction converges to $1$ and
$b'(m)\to \infty$ (by \eqref{E:D2}). Hence, we have established
\eqref{E:l1}, completing the proof of Case $2$.

Since the two cases cover all the functions $a\in \H$ that satisfy
$x\prec a(x)\prec x^2$ the proof is complete.
\end{proof}

We now derive an extension of Lemma~\ref{L:equi1} that will be used
later.  A big part of the proof is analogous to that of
Lemma~\ref{L:equi1} so we are  just going to sketch it.
\begin{proposition}\label{L:equi2}
  Suppose that $a\in \H$ is eventually positive and satisfies
  $x^k\prec a(x)\prec x^{k+1}$ for some $k\in\N$.
Let   $\varepsilon>0$  and $d_0,d_1,\ldots,d_k\in\N$.

  Then   there exists a sequence of intervals
  $(I_m)_{m\in\N}$  such that \\
  $(i)$ $d_k m\leq a^{(k)}(n)\leq d_k m+\varepsilon$, for every $n\in I_m$ and $m$ large enough, and \\
  $(ii)$ the sequence $(a(n)/d_0,a'(n)/d_1,\ldots,
  a^{(k-1)}(n)/d_{k-1})$ is equidistributed in $\T^{k-1}$ with respect
  to the sequence of intervals $I_m$.
\end{proposition}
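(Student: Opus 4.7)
The plan is to follow the strategy of Lemma~\ref{L:equi1} with $a^{(k)}$ playing the role previously played by $a'$. By Lemma~\ref{L:basic} the function $a^{(k)}$ is eventually positive, increasing, and tends to $+\infty$, while $a^{(k+1)}$ is eventually positive and tends to $0$. Let $k_m$ be the smallest integer with $a^{(k)}(k_m)\geq d_k m$; I would then define $I_m=[k_m,l_m]$ by a case analysis parallel to that in Lemma~\ref{L:equi1}. In Case~1, where $a(x)\prec x^{k+\delta}$ for every $\delta>0$, take $l_m=k_m+[k_m^{3/4}]$; in Case~2, where $x^{k+\delta}\prec a(x)\prec x^{k+1}$ for some $\delta>0$, let $I_m$ be the maximal interval on which $d_k m\leq a^{(k)}(n)\leq d_k m+\varepsilon$. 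Condition~$(i)$ then follows exactly as in the proof of Lemma~\ref{L:equi1}, using the mean value theorem applied to $a^{(k)}$ together with $a^{(k+1)}(x)\to 0$.

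For condition~$(ii)$, by Weyl's criterion it suffices to prove for every non-zero $(s_0,\ldots,s_{k-1})\in\Z^k$ that
$$
\lim_{m\to\infty}\frac{1}{|I_m|}\sum_{n\in I_m} e(f(n))=0,\qquad f(x)=\sum_{j=0}^{k-1}\frac{s_j}{d_j}a^{(j)}(x).
$$
Let $j_0$ be the smallest index with $s_{j_0}\neq 0$. The growth estimates of Lemma~\ref{L:basic} show that the term $(s_{j_0}/d_{j_0})a^{(j_0+2)}(x)$ dominates the remaining summands of $f''(x)$ as $x\to\infty$, so $f''$ has eventually constant sign and $|f''(x)|\asymp |a^{(j_0+2)}(x)|$. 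Using that $a^{(k)}$ varies by at most $\varepsilon$ on $I_m$, together with the mean value theorem applied to the lower-order derivatives, one checks that $|f'(l_m)-f'(k_m)|$ remains uniformly bounded in $m$. Hence van der Corput's estimate (Lemma~\ref{L:VDC2}) yields
$$
\Bigl|\sum_{n\in I_m}e(f(n))\Bigr|\ll \frac{1}{\sqrt{|f''(l_m)|}},
$$
and the problem reduces to showing $|I_m|\sqrt{|f''(l_m)|}\to\infty$ for every admissible $(s_0,\ldots,s_{k-1})$.

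The main obstacle is the worst case $j_0=k-1$, in which the dominant term of $f''$ is a multiple of $a^{(k+1)}$, a quantity that tends to $0$. In Case~2 this is handled, exactly as in Case~2 of Lemma~\ref{L:equi1}, by setting $b=(a^{(k)})^{-1}$ and noting that $|I_m|\asymp b'(d_k m)$ while $|a^{(k+1)}(l_m)|\asymp 1/b'(d_k m)$; together these give $|I_m|\sqrt{|a^{(k+1)}(l_m)|}\asymp\sqrt{b'(d_k m)}\to\infty$ since $b'\to\infty$ by Lemma~\ref{L:basic'}. In Case~1, we use the estimate $a^{(k+1)}(x)\succ x^{-1-\delta'}$ for every $\delta'>0$ from Lemma~\ref{L:basic} together with $|I_m|\asymp k_m^{3/4}$ to obtain $|I_m|\sqrt{|a^{(k+1)}(l_m)|}\succ k_m^{3/4-(1+\delta')/2}\to\infty$ once $\delta'<1/2$. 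For $j_0<k-1$ the quantity $|f''|$ grows polynomially in $x$ and the required bound is considerably easier. Combining these estimates across the cases verifies condition~$(ii)$ and completes the proof.
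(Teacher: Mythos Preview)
Your approach has a genuine gap. The claim that $|f'(l_m)-f'(k_m)|$ remains uniformly bounded in $m$ is false when $j_0<k-1$. Indeed, $f'(x)$ then contains the term $(s_{j_0}/d_{j_0})a^{(j_0+1)}(x)$, and by the mean value theorem
\[
a^{(j_0+1)}(l_m)-a^{(j_0+1)}(k_m)=(l_m-k_m)\,a^{(j_0+2)}(\xi_m)
\]
for some $\xi_m\in I_m$. Since $j_0+2\leq k$, Lemma~\ref{L:basic} gives $x^{k-j_0-2}\prec a^{(j_0+2)}(x)$, so $a^{(j_0+2)}(\xi_m)\to\infty$; as $|I_m|\to\infty$ as well, the product blows up. Worse, plugging this into Lemma~\ref{L:VDC2} yields a bound of order
\[
\frac{|f'(l_m)-f'(k_m)|}{|I_m|\sqrt{|f''(l_m)|}}\asymp\frac{|f''(\xi_m)|}{\sqrt{|f''(l_m)|}}\asymp\sqrt{|f''(l_m)|}\to\infty,
\]
so the second-derivative estimate gives nothing in this range. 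Your argument is correct only in the single case $j_0=k-1$, where $f'$ is a constant multiple of $a^{(k)}$ and its variation over $I_m$ is indeed bounded by condition~$(i)$.

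The paper handles this by a different mechanism: before invoking the second-derivative test it applies van der Corput's \emph{difference} theorem $k-1$ times to the combination $b(n)=\sum_{j}c_j a^{(j)}(n)$, producing a function $B(x)=\Delta_{m_1}\cdots\Delta_{m_{k-1}}b(x)$ that satisfies $x\prec B(x)\prec x^2$ and has $B'(x)-Ma^{(k)}(x)\to 0$ for an explicit constant $M$. Only then is one back in the situation of Lemma~\ref{L:equi1}, where the variation of the first derivative over $I_m$ is genuinely controlled by condition~$(i)$. The differencing step is the missing idea; without it there is no way to make Lemma~\ref{L:VDC2} effective for the lower-order terms.
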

\begin{proof}
  We assume that $d_0=d_1=\ldots=d_k=1$, the general case is similar.
  As in the proof of Lemma~\ref{L:equi1} we define the sequence of intervals $I_m=[k_m,l_m]$ as follows: \\
  {\bf Case 1}: If $a(x)\prec x^{k+\delta}$ for every $\delta>0$ (in
  which case $a^{(k)}(x)\prec x^\delta$ for every $\delta>0$), then
$$
I_m=[k_m,k_m+k_m^{3/4}] \text{ where } k_m \text{ is the smallest }
n\in \N \text{ such that } a^{(k)}(n)\geq m.
$$
{\bf Case 2}: If $ a(x)\succ x^{k+\delta}$ for some $\delta>0$ (in
which case $x^{\delta}\prec a^{(k)}(x)\prec x$ for some $\delta>0$),
then
$$
I_m=\{n\colon m\leq a^{(k)}(n) \leq m+\varepsilon\}.
$$

Arguing as in Lemma~\ref{L:equi1} we can show that condition $(i)$ is
satisfied.  Therefore, it remains to show the equidistribution property
$(ii)$, or equivalently, that for $c_0,\ldots,c_k\in \Z$, not all of
them zero, the sequence $b(n)$ defined by
$$
b(n)=c_0a(n)+c_1a'(n)+\ldots +c_{k-1}a^{(k-1)}(n)
$$ is equidistributed in $\T$
with respect to the sequence of intervals $I_m$.  To do this we shall
use a difference theorem of van der Corput (Theorem $3.1$ in
\cite{KN}) which enables us to reduce matters  to a setup similar to the
one treated in Lemma~\ref{L:equi1} (a similar trick was used in
\cite{Bo3}). We are going to  assume that $c_0\neq 0$,  the other cases can
be treated similarly.

By the theorem of van der Corput, in order to show that $b(n)$ is
equidistributed in $\T$ with respect to the sequence of intervals
$I_m$, it suffices to show that for every $m\in\N$ the sequence
$\Delta_mb(n)$, where $\Delta_mb(n)=b(n+m)-b(n)$, is equidistributed
in $\T$ with respect to the sequence of intervals $I_m$. Applying this
successively we reduce our problem to showing that for every
$m_1,\ldots,m_{k-1}\in\N$ the sequence $B(n)$, where the function $B\in \H$ is defined by
$$
B(x)=\Delta_{m_1}\Delta_{m_2}\cdots\Delta_{m_{k-1}}b(x),
$$
is equidistributed in $\T$ with respect to the sequence of intervals
$I_m$.
%% Equivalently, we need to establish that for every nonzero integer
%% $s$ we have
%% \begin{equation}\label{E:b_m}
%%\lim_{m\to\infty} \frac{1}{l_m-k_m}\sum_{n=k_m}^{l_m} e(sA(n))=0.
%%\end{equation}

In order to prove this, we first derive some  properties about the function $B(x)$ that will be useful.  Since $x^k\prec b(x) \prec x^{k+1}$, by
repeatedly applying Lemma~\ref{L:basic} we get
\begin{equation}\label{E:A2}
  x\prec B(x)\prec x^2.
\end{equation}
It will also be useful to relate the functions $B'(x)$ and
$a^{(k)}(x)$.  By repeatedly applying the mean value theorem and using
that $a^{(k+1)}(x)\to 0$ (follows from Lemma~\ref{L:basic}) we get
\begin{equation}\label{E:B'}
  \lim_{x\to\infty} (B'(x)-M a^{(k)}(x+\xi_x))= 0
\end{equation}
where $M=c_0 m_1\cdots m_{k-1}$, for some $\xi_x\in \R$ that satisfies
$0\leq \xi_x\leq m_1+\ldots+m_{k-1}$.  Moreover, since
$a^{(k)}(x)\prec x$ and $\xi_x$ is bounded, we get by
Lemma~\ref{L:basic} that
 $$
 a^{(k)}(x+\xi_x)-a^{(k)}(x)\to 0.
 $$
 Combining this with \eqref{E:B'} we get
 \begin{equation}\label{E:A=a}
   \lim_{x\to\infty}(B'(x)-M a^{(k)}(x))= 0.
 \end{equation}

 Using \eqref{E:A2} and arguing as in the proof of Lemma~\ref{L:equi1}
 we get that the sequence $B(n)$ is equidistributed in $\T$ with
 respect to the sequence of intervals $J_m$ that are chosen as
 follows:

 {\bf Case A.} If $B(x)\prec x^{1+\delta}$ for every $\delta>0$,
 then we can  set $J_m=I_m$
 (because of \eqref{E:A=a} it is easy to verify that
 this choice works). Therefore,  in this case we are immediately done.

 {\bf Case B.}  If $ B(x)\succ x^{1+\delta}$ for some $\delta>0$, we
 can choose
$$
J_m=\{n\colon r m+e(n) \leq B'(n)\leq r\cdot (m+\varepsilon)+e(n)\}
$$
where $r$ is any positive real number and $e(n)$ is any sequence that
converges to $0$.  Our objective is to choose $r$ and $e(n)$ so that
$J_m=I_m$.  We choose $r=M$ and $e(n)=B'(n)-Mb^{(k)}(n)$ (which
converges to $0$ by \eqref{E:A=a}). In this case we have that $$
J_m=\{n\colon Mm\leq Mb^{(k)}(n)\leq M(m+\varepsilon)\}=I_m.
 $$
Therefore,  in both cases we get the required equidistribution property. This completes the proof.
 %%$$
 %%\lim_{m\to\infty}\frac{|J_m \bigtriangleup I_m|}
 %%$$
\end{proof}

\subsection{Finding the polynomial patterns}
We will now complete the proof of Proposition~\ref{P:polypattern}.
First we use Proposition~\ref{L:equi2} to derive some more useful results for our particular setup.
\begin{lemma} \label{L:medium2} Suppose that $a\in \H$ is eventually
  positive and satisfies $x^k \prec a(x)\prec x^{k+1}$ for some $k\in
  \N$. Let   $r,m\in\N$ and $\varepsilon>0$.

  Then  there exist
  $n_{r,m}\in\N$ with $n_{r,m}\to\infty$ $($as $m\to\infty$ and $r$ is
  fixed$)$, such that  for all large $m$ we have
  \begin{gather*}
    \Big[\frac{a^{(k)}(n_{r,m})}{k!}\Big]=rm, \quad
    \Big[\frac{a^{(i)}(n_{r,m})}{i!}\Big]\equiv 0 \!\!\pmod{r},\
    i=0,\ldots,k-1, \\ \Big\{\frac{a^{(i)}(n_{r,m})}{i!}\Big\}\leq
    \varepsilon, \ i=0,\ldots,k.
  \end{gather*}
\end{lemma}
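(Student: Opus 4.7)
The idea is to find $n_{r,m}$ inside a suitable interval $I_{rm}$ produced by Proposition~\ref{L:equi2}, after choosing the normalizing constants so that the equidistribution statement controls exactly the fractional parts required in the conclusion. Concretely, I would apply Proposition~\ref{L:equi2} with $d_k=k!$, $d_i=r\cdot i!$ for $i=0,\dots,k-1$, and the role of its parameter $\varepsilon$ played by $\varepsilon\cdot k!$ (shrinking $\varepsilon<1$ if necessary). This produces a sequence of intervals $(I_M)_{M\in\N}$ such that for every large $M$ and every $n\in I_M$ one has
\[
k!\,M\ \leq\ a^{(k)}(n)\ \leq\ k!\,M+\varepsilon\cdot k!,
\]
and such that the vector $\bigl(a(n)/r,\,a'(n)/r,\,a''(n)/(2!\,r),\dots,a^{(k-1)}(n)/((k-1)!\,r)\bigr)$ is equidistributed in $\T^k$ with respect to $(I_M)$.

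For each $m$ large I then look inside $I_{rm}$. Condition $(i)$ with $M=rm$ yields $rm\leq a^{(k)}(n)/k!\leq rm+\varepsilon$ for every $n\in I_{rm}$, so $[a^{(k)}(n)/k!]=rm$ and $\{a^{(k)}(n)/k!\}\leq\varepsilon$ hold automatically. The joint equidistribution then gives that the proportion of $n\in I_{rm}$ satisfying
\[
\bigl\{a^{(i)}(n)/(r\cdot i!)\bigr\}\leq \varepsilon/r\quad\text{for all } i=0,\dots,k-1
\]
tends to $(\varepsilon/r)^k>0$, hence such $n$ exist for every sufficiently large $m$; pick one and call it $n_{r,m}$. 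Writing $q_i=[a^{(i)}(n_{r,m})/(r\cdot i!)]$, the inclusion $a^{(i)}(n_{r,m})/(r\cdot i!)\in[q_i,q_i+\varepsilon/r]$ rearranges to $a^{(i)}(n_{r,m})/i!\in[r q_i,r q_i+\varepsilon]$, giving $[a^{(i)}(n_{r,m})/i!]=r q_i\equiv 0\pmod{r}$ together with $\{a^{(i)}(n_{r,m})/i!\}\leq\varepsilon$, as required.

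Finally $n_{r,m}\to\infty$ as $m\to\infty$ (with $r$ fixed) because Lemma~\ref{L:basic}$(i)$ gives $1\prec a^{(k)}(x)$, so $a^{(k)}(x)\to\infty$ and therefore $\min I_{rm}\to\infty$. The only delicate point is the bookkeeping of constants: the factors $d_i=r\cdot i!$ are chosen precisely so that the equidistribution of the coordinates $a^{(i)}(n)/(r\cdot i!)$ in $\T^k$ translates, after multiplying by $r$, into control of the fractional parts of $a^{(i)}(n)/i!$ at the sharper scale $i!$ while forcing simultaneous divisibility by $r$ of the integer parts; once this matching of scales is set up, everything follows routinely from Proposition~\ref{L:equi2}.
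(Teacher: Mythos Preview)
Your proof is correct and follows essentially the same approach as the paper: apply Proposition~\ref{L:equi2} with $d_i=r\cdot i!$ for $i<k$ so that equidistribution lands the vector $\bigl(a^{(i)}(n)/(r\cdot i!)\bigr)_{i<k}$ in the box $[0,\varepsilon/r]^k$, and then use $\{x/r\}\leq \varepsilon/r$ to deduce both $[x]\equiv 0\pmod r$ and $\{x\}\leq\varepsilon$. The only cosmetic difference is that the paper takes $d_k=rk!$ and uses the intervals $I_m$ directly, whereas you take $d_k=k!$ and pass to the subsequence $I_{rm}$; since equidistribution along $(I_M)$ trivially persists along any subsequence $M=rm$, this is immaterial.
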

\begin{proof}
  Let $0<\varepsilon<1$ and $r\in \N$ be fixed. By Proposition~\ref{L:equi2} the sequence
$$
b(n)=\Big(\frac{a(n)}{r},\frac{a'(n)}{r\cdot1!},\ldots,\frac{a^{(k-1)}(n)}{r\cdot
  (k-1)!}\Big)
$$ is equidistributed in $\T^{k-1}$
with respect to the intervals
$$
I_m=\{n\in\N\colon rmk!\leq a^{(k)}(n)\leq rmk!+\varepsilon\}.
$$
Hence, for large enough $m$ there exists an $n_{r,m}\in\N$ such that
$b(n_{r,m})\in [0,\frac{\varepsilon}{r}]^{k-1}$. The result follows
by noticing that $\{x/r\}<1/r$ implies that $[x]\equiv 0
\!\!\pmod{r}$, and the estimate $\{x\}\leq r\{x/r\}$.
\end{proof}

\begin{lemma} \label{L:strong2}Suppose that $a\in \H$ is eventually
  positive and satisfies $x^k \prec a(x)\prec x^{k+1}$ for some $k\in
  \N$.

   Then there exist $\varepsilon_{r,m}\in\R,n_{r,m}\in\N$, with
  $\varepsilon_{r,m}\to 0,\ n_{r,m}\to\infty$ $($as $m\to\infty$ and
  $r$ is fixed$)$, such that for all large $m$ we have
  \begin{gather*}
    \Big[\frac{a^{(k)}(n_{r,m})}{k!}\Big]=rm, \quad
    \Big[\frac{a^{(i)}(n_{r,m})}{i!}\Big]\equiv 0 \!\!\pmod{r},\
    i=0,\ldots,k-1, \\ \Big\{\frac{a^{(i)}(n_{r,m})}{i!}\Big\}\leq
    \varepsilon_{r,m}, \ i=0,\ldots,k.
  \end{gather*}
\end{lemma}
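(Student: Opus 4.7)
The plan is a straightforward diagonalization in the parameter $\varepsilon$ of Lemma~\ref{L:medium2}.

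Fix $r\in \N$. For each $j\in\N$, apply Lemma~\ref{L:medium2} with $\varepsilon:=1/j$. This yields integers $n^{(j)}_{r,m}\in\N$ and a threshold $M_j\in\N$ such that for every $m\geq M_j$ one has
\[
\Big[\frac{a^{(k)}(n^{(j)}_{r,m})}{k!}\Big]=rm,\quad \Big[\frac{a^{(i)}(n^{(j)}_{r,m})}{i!}\Big]\equiv 0\!\!\pmod{r}\ (i=0,\dots,k-1),\quad \Big\{\frac{a^{(i)}(n^{(j)}_{r,m})}{i!}\Big\}\leq \frac{1}{j}\ (i=0,\dots,k).
\]
Replacing $M_j$ by $\max(M_j,M_{j-1}+1,\ldots,M_1+j)$ if necessary, we may assume $M_j$ is strictly increasing with $j$. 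For every $m\geq M_1$ let $j(m)$ be the unique integer with $M_{j(m)}\leq m<M_{j(m)+1}$; then $j(m)\to \infty$ as $m\to\infty$. Define $n_{r,m}:=n^{(j(m))}_{r,m}$ and $\varepsilon_{r,m}:=1/j(m)$.

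By construction all three displayed conditions of the lemma are satisfied for every $m\geq M_1$, and $\varepsilon_{r,m}\to 0$ as $m\to\infty$. It remains to verify that $n_{r,m}\to\infty$. But this is forced by the equation $[a^{(k)}(n_{r,m})/k!]=rm$: by Lemma~\ref{L:basic}(i) applied with $l=k$ we have $1\prec a^{(k)}(x)\prec x$, so $a^{(k)}(x)\to\infty$ monotonically, and therefore the preimages of the diverging sequence $k!\,rm$ must themselves diverge.

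There is no real obstacle here, since Lemma~\ref{L:medium2} does all the analytic work (uniform distribution via van der Corput, control of the leading derivative); the passage to a vanishing $\varepsilon_{r,m}$ is a pure diagonal extraction. The only point that required a moment's thought is that although we patch together different $n^{(j)}_{r,m}$ as $j(m)$ varies with $m$, divergence of $n_{r,m}$ is nonetheless automatic, because the equation $[a^{(k)}(n_{r,m})/k!]=rm$ pins the value of $a^{(k)}(n_{r,m})$ to a sequence tending to infinity.
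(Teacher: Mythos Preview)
Your proof is correct and follows essentially the same diagonalization as the paper: apply Lemma~\ref{L:medium2} with $\varepsilon=1/j$, choose increasing thresholds $M_j$, and on each block $[M_j,M_{j+1})$ use the $j$-th sequence. Your explicit verification that $n_{r,m}\to\infty$ via the constraint $[a^{(k)}(n_{r,m})/k!]=rm$ is a nice touch that the paper leaves implicit.
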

\begin{proof}
  Let $r\in\N$. For $\varepsilon=1/k$ there exist $M_{r,k}$ and
  $n_{r,m}(k)$ such that the conclusion of Lemma~\ref{L:medium2} is
  satisfied for every $m\geq M_{r,k}$.  For $M_{r,k}\leq m< M_{r,k+1}$,
  let $\varepsilon_{r,m}=1/k$ and $n_{r,m}=n_{r,m}(k)$. Thus defined,
  the sequences $\varepsilon_{r,m}, n_{r,m}$  satisfy the conclusions of
  our lemma for every $m\geq M_{r,1}$.
\end{proof}

We are now ready to prove Proposition~\ref{P:polypattern}.
\begin{proof}[Proof of Proposition~\ref{P:polypattern}]
If $k=0$ the result follows immediately from Lemma~\ref{L:basic}.
Suppose that $k\geq 1$ and
  let $n_{r,m}$ be as in the statement of Lemma~\ref{L:strong2}.
  Using the Taylor expansion of $a(x)$ around the point $x=n_{r,m}$ we get for $n\in\N$ that
  \begin{equation}\label{E:MV2}
    a(n_{r,m}+n)=a(n_{r,m})+na'(n_{r,m})+\ldots+\frac{n^k}{k!}a^{(k)}(n_{r,m})+
    \frac{n^{k+1}}{(k+1)!}a^{(k+1)}(\xi_{r,n})
  \end{equation}
  for some $\xi_{r,n}\in [n_{r,m},n_{r,m}+n]$. By Lemma~\ref{L:basic}
  we have $a^{(k+1)}(x)\to 0$ as $x\to\infty$ (also $a^{(k+1)}(x)>0$).
Furthermore,    we have  that
  $\Big\{\frac{a^{(i)}(n_{r,m})}{i!}\Big\}\leq \varepsilon_{r,m}$ for
  $ i=0,\ldots,k$, where $ \varepsilon_{r,m}\to 0$ as $m\to\infty$.
  It follows that there exist integers $N_{r,m}$ such that
  $N_{r,m}\to\infty$ and for every $1\leq n\leq N_{r,m}$ and all large
   $m\in \N$ we have
$$
\{a(n_{r,m})\}+n\{a'(n_{r,m})\}+\ldots+n^k\Big\{\frac{a^{(k)}(n_{r,m})}{k!}\Big\}+
n^{k+1}\Big\{\frac{a^{(k+1)}(\xi_{r,n})}{(k+1)!}\Big\}
%% \leq \leq \frac{1}{l_m}+\frac{n}{l_m}+\ldots+
%% \frac{n^{k-1}}{(k-1)!l_m}
\leq \frac{1}{2}.
$$
For these values of $m$ and $n$, equation \eqref{E:MV2} gives
$$
[a(n_{r,m}+n)]=[a(n_{r,m})]+n[a'(n_{r,m})]+\ldots+n^k\Big[\frac{a^{(k)}(n_{r,m})}{k!}\Big].
$$
Remembering that $n_{r,m}$ was chosen to also satisfy
$\Big[\frac{a^{(k)}(n_{r,m})}{k!}\Big]=rm$ and $\
\Big[\frac{a^{(i)}(n_{r,m})}{i!}\Big]\equiv 0 \!\!\pmod{r}$,
$i=0,\ldots,k-1$, we get for $1\leq n\leq N_{r,m}$ that
$$
[a(n_{r,m}+n)]= r(c_{0,r,m}+c_{1,r,m}n+\ldots+c_{k-1,r,m}n^{k-1}+mn^k)
$$
for some $c_{i,r,m}\in\N$, $i=0,\ldots,k-1$. This proves the
advertised result.
\end{proof}

\section{Multiple recurrence for  Hardy field sequences}\label{S:mainpart}
In this section we shall prove our main result which we now recall:
\begin{theorem}
  Let $a\in\H$ satisfy $x^k\prec a(x) \prec x^{k+1}$ for some non-negative
  integer $k$.

  Then $S=\{[a(1)],[a(2)],\ldots\}$ is a set of multiple recurrence.
\end{theorem}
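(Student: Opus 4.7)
The plan is to execute the three-step strategy announced in the introduction. First, apply Proposition~\ref{P:polypattern} to locate polynomial progressions inside the range of $[a(n)]$: for every $r\in \N$ and every large $m$, the set $S$ contains $\{r\,q_{r,m}(n):1\leq n\leq N_{r,m}\}$, where $q_{r,m}(n)=mn^k+p_{r,m}(n)$ with $\deg p_{r,m}\leq k-1$ and $N_{r,m}\to\infty$. It then suffices, via Furstenberg's correspondence principle, to show that for every system $(X,\cB,\mu,T)$, every $\ell\in\N$, and every non-negative non-trivial $f\in L^\infty(\mu)$, one can choose $r\in\N$ so that
\[
\liminf_{M\to\infty}\frac{1}{M}\sum_{m=1}^M\frac{1}{N_{r,m}}\sum_{n=1}^{N_{r,m}}\int f\cdot T^{r\,q_{r,m}(n)}f\cdots T^{\ell r\,q_{r,m}(n)}f\, d\mu > 0.
\]

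Next I would carry out a reduction to nilsystems analogous to Proposition~\ref{P:seminorm'}. Because each $q_{r,m}$ is a polynomial of degree $k$ in $n$ with leading coefficient $rm$ and largely arbitrary lower-order coefficients, the short van der Corput chain used in Lemma~\ref{L:seminorm} will not close. My plan is to run a PET-style induction, organized as nested applications of Lemma~\ref{L:VDC} in the $m$- and $n$-variables, that successively lowers the degree and number of essentially distinct iterates in the averaged integrand, and at each step absorbs the floating lower-order coefficients of $p_{r,m}$ into the dummy shifts. After finitely many steps one is left with a genuine polynomial ergodic average taken over a F{\o}lner sequence in some $\Z^d$, to which Corollary~\ref{C:L2'} applies; this will show that the nilfactor $\cZ$ is characteristic for the averages above.

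Having reduced, via Theorem~\ref{T:HoKra} and a conditional-expectation/approximation step, to an ergodic nilrotation $T=T_a$ on a nilmanifold $X=G/\Gamma$, I would then verify the required multiple recurrence property on nilsystems in the spirit of Section~\ref{SS:warmupconclusion}. Choose $r_0$ so that $a^{r_0}$ acts ergodically on each connected component of $X$, and set $r=r_0$. The problem reduces to analyzing, for generic $x=g\Gamma$, the limit of
\[
\frac{1}{M}\sum_{m=1}^M\frac{1}{N_{r_0,m}}\sum_{n=1}^{N_{r_0,m}}\tilde h\bigl(b_g^{q_{r_0,m}(n)}\tilde\Gamma\bigr)
\]
on the sub-nilmanifold $Z\subset X^\ell$ supplied by Lemma~\ref{L:Ziegler}. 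Theorem~\ref{T:GT2}, combined with Lemma~\ref{L:density0} applied to the leading term $rm\cdot\alpha$ (with $\alpha$ irrational) of any putative non-trivial quasi-character, will show that for a density-one set of $m$ the finite orbit $\bigl(b_g^{q_{r_0,m}(n)}\tilde\Gamma\bigr)_{1\leq n\leq N_{r_0,m}}$ is $\delta$-equidistributed in $Z$. Averaging over $m$ converts the double average into $\int\tilde h\,dm_Z$, which by ergodicity of $b_g$ on $Z$ coincides with the classical Cesàro limit $\lim_{N\to\infty}\frac{1}{N}\sum_{n=1}^N\int h\cdot T_a^{r_0 n}h\cdots T_a^{\ell r_0 n}h\,dm_X$. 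A uniform version of Furstenberg's multiple recurrence theorem (applied to $h$, whose integral equals that of $f$) then produces a positive lower bound, and a standard ergodic-decomposition argument removes the ergodicity assumption.

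The main obstacle is the reduction-to-nilsystems step: the leading coefficient $rm$ of $q_{r,m}$ must be exposed so that the quantitative equidistribution of step three can detect it, yet the lower-order coefficients of $p_{r,m}$ depend on $m$ in a way we cannot control. Designing the PET scheme so that precisely these $m$-dependent nuisance terms are eliminated while a polynomial family of the type covered by Theorem~\ref{T:L2} survives—together with the bookkeeping needed to majorize the original averages by ones in which $\cZ$ is characteristic—is where the bulk of the technical work lies, in contrast with the simpler linear situation handled by Proposition~\ref{P:seminorm'}.
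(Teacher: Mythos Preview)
Your proposal is correct and follows essentially the same route as the paper: Proposition~\ref{P:polypattern} feeds into Theorem~\ref{T:recurrencepoly}, whose proof combines a PET-based reduction to nilsystems (Proposition~\ref{P:seminorm2}) with the equidistribution argument of Proposition~\ref{P:equidistributiongeneral}, and then concludes exactly as in Section~\ref{SS:warmupconclusion}.

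One point worth sharpening: the PET scheme the paper runs is entirely in the $n$-variable (introducing shift parameters $h_1,\ldots,h_r$), not in the $m$-variable. The mechanism by which the $m$-dependent lower-order coefficients of $q_m$ disappear is purely algebraic (Lemma~\ref{L:leading}): once the van~der~Corput/PET steps have driven the expression down to integrals of the form $\int F_0\cdot T^{P_{m,1}(h)}F_1\cdots T^{P_{m,s}(h)}F_s\,d\mu$ with each $P_{m,i}$ an integer combination of the $p_m(n+\sum_{j\in J}h_j)$ that is \emph{constant in $n$}, the leading-term contribution forces $P_{m,i}(h)=rm\cdot P_i(h)$ with $P_i$ independent of $m$ and of the nuisance polynomial $q_m$. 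This is the precise bookkeeping device you flagged as the main obstacle; the paper isolates it as Lemmas~\ref{L:tolinear}, \ref{L:VDCmain}, and \ref{L:leading}.
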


Using Proposition~\ref{P:polypattern} we see  that for elements of  $\H$ that are
eventually positive Theorem~A' is an
immediate consequence of the following result (the case of eventually negative elements of $\H$
can be treated similarly):
%%\footnote{The case where $a(x)$ is
%%  eventually negative can be treated by applying the implied multiple
%%  recurrence property of Theorem~A' to the function $-a+1$
%%  for the transformation $T^{-1}$ (notice that -[-a+1]=a if a is not an integer).}:
\begin{theorem}\label{T:recurrencepoly}
  Suppose that for every $r,m\in\N$ the set $S\subset \N$ contains
  patterns of the form
$$
\{r(mn^k+p_{r,m}(n)), 1\leq n\leq N_{r,m}\}
$$
where $p_{r,m}(n)$ is an integer polynomial of degree at most $k-1$,
and $N_{r,m}\in\N$ satisfy $N_{r,m}\to\infty$ $($as $m\to\infty$ and $r$
is fixed$)$.

Then $S$ is a set of multiple recurrence.
\end{theorem}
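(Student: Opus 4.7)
The plan is to follow the three-step scheme used for Theorem~B' in Section~\ref{S:singlewarmup}, with the polynomial patterns $r(mn^k + p_{r,m}(n))$ playing the role of the arithmetic progressions $c_m + mn$. First, one shows that the nilfactor $\cZ$ is characteristic for the associated multiple ergodic averages; second, one establishes an equidistribution result on connected nilmanifolds along the polynomial patterns; third, one combines these with Furstenberg's multiple recurrence theorem to conclude positivity. The equidistribution and recurrence steps proceed by close analogy with the corresponding parts of Theorem~B', while the reduction to nilsystems requires substantially more work and is where I expect the main obstacle to lie.

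For the reduction to nilsystems, consider the averages
$$A_M = \frac{1}{M}\sum_{m=1}^{M} \frac{1}{N_{r,m}}\sum_{n=1}^{N_{r,m}} \prod_{j=1}^{\ell} T^{j r (mn^k + p_{r,m}(n))} f_j.$$
I would apply Lemma~\ref{L:VDC} to the inner average in the variable $n$. The resulting inner product produces correlations involving $q_{r,m}(n+h) - q_{r,m}(n)$, which is a polynomial in $n$ of degree $k-1$ whose coefficients depend in a controlled way on $m$ and $h$. Iterating this van der Corput step $k$ times (a PET-type induction along the lines of Proposition~\ref{P:seminorm'}) reduces matters to averages of the form
$$\frac{1}{|\Phi_M|}\sum_{(m,h_1,\ldots,h_k,n)\in\Phi_M} \int g_0 \cdot \prod_i T^{P_i(m,h_1,\ldots,h_k,n)} g_i \, d\mu,$$
where the $P_i$ are essentially distinct polynomials and $\Phi_M$ is a suitably chosen F{\o}lner sequence in $\Z^{k+1}$. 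Corollary~\ref{C:L2'} then forces these to vanish whenever one of the $f_j$ is orthogonal to a suitable $\cZ_\ell$, so $\cZ$ is characteristic for $A_M$.

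For the equidistribution step, I would establish the analogue of Proposition~\ref{L:equidistribution}: for a connected nilmanifold $X = G/\Gamma$ and an ergodic nilrotation $a \in G$, the averages $\frac{1}{M}\sum_{m=1}^{M} \frac{1}{N_{r,m}}\sum_{n=1}^{N_{r,m}} F(a^{mn^k + p_{r,m}(n)}\Gamma)$ converge to $\int F\,dm_X$ for every $F \in C(X)$. The argument is as in Proposition~\ref{L:equidistribution}: if for some $m$ the finite sequence $(a^{q_{r,m}(n)}\Gamma)_{1 \le n \le N_{r,m}}$ fails to be $\delta$-equidistributed, then Theorem~\ref{T:GT2} produces a non-trivial quasi-character $\psi$ of bounded frequency magnitude whose evaluation on the orbit has a small $C^\infty[N_{r,m}]$ norm. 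Examining the degree-$k$ coefficient of $\psi(a^{q_{r,m}(n)})$ as a polynomial in $n$, one sees it has the form $m^{k}\alpha$ (up to a harmless factor) for a nonzero $\Z$-linear combination $\alpha$ of the irrational coordinates of $\pi(a)$ on the horizontal torus; an application of (the $m$-variable analogue of) Lemma~\ref{L:density0} then shows that the set of bad $m$ has density zero. The conclusion of the proof now mirrors Section~\ref{SS:warmupconclusion}: after an ergodic decomposition and an $L^2$-approximation by a finite-step nilfactor, one uses Lemma~\ref{L:Ziegler} and the above equidistribution to replace $A_M$ (with $r = r_0$ chosen so that $a^{r_0}$ acts ergodically on every connected component of $X$) by the standard arithmetic average $\frac{1}{N}\sum_{n=1}^N \int h \cdot T^{r_0 n}h \cdots T^{\ell r_0 n}h\, d\mu$, whose positive lower bound is provided by a uniform form of Furstenberg's multiple recurrence theorem.

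The main obstacle is the PET reduction in the first step. Unlike the $k=1$ case treated in Proposition~\ref{P:seminorm'}, the higher-degree polynomial patterns force one to chase a whole cascade of polynomial expressions of degrees $k, k-1, \ldots, 1$ through the van der Corput iterations, and the $m$-dependence of the auxiliary polynomials $p_{r,m}$ means that at each stage one must verify that the polynomial families remaining (now involving both the variable $n$ and the auxiliary shift variables $h_1, \ldots, h_k$, as well as $m$ itself) remain essentially distinct and that the final F{\o}lner-type averages genuinely fall under Corollary~\ref{C:L2'}. This careful bookkeeping of polynomial families under PET is precisely the "technically much more involved" reduction referred to in the introduction, and is the only place where new ideas beyond those of Theorem~B' are required.
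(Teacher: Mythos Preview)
Your three-step outline matches the paper's: Proposition~\ref{P:seminorm2} for the reduction to the nilfactor, Proposition~\ref{P:equidistributiongeneral} for equidistribution on connected nilmanifolds (whose proof is indeed identical to that of Proposition~\ref{L:equidistribution}), and then a verbatim repetition of Section~\ref{SS:warmupconclusion}. Steps two and three are correctly described.

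The gap is exactly where you locate it, but it is more than bookkeeping. After a naive PET iteration the polynomials you produce, call them $P_i(m,h,n)$, still carry the unknown coefficients of $q_m=p_{r,m}$: every expression arising in the van~der~Corput scheme is an integer combination $\sum_l c_l\,p_m(n+h_l')$, and for generic $h$ these depend on all the coefficients of $q_m$, not just on the leading coefficient $m$. Since those coefficients are arbitrary and vary with $m$, you do not obtain a \emph{fixed} family of essentially distinct polynomials in $(m,h,n)$, and Corollary~\ref{C:L2'} does not apply. The paper's resolution is to push the PET reduction (Lemma~\ref{L:VDCmain}) all the way until the resulting polynomials are \emph{constant in $n$}; the decisive algebraic observation (Lemma~\ref{L:leading}) is that if $\sum_l c_l(n+h_l')^k$ is constant in $n$ then $\sum_l c_l(n+h_l')^j\equiv 0$ for every $j<k$ (just differentiate), so the entire $q_m$-contribution vanishes and one is left with $P_{m,i}=rm\cdot P_i(h)$ for fixed polynomials $P_i$ independent of $m$. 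Only then does Corollary~\ref{C:L2'} apply, with $(m,h)$ as the F{\o}lner variables and the single family $\{rm\,P_i(h)\}$. Your proposal correctly flags the obstacle but does not supply this mechanism for eliminating the $q_m$-dependence, which is the one genuinely new idea beyond Theorem~B'.
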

The rest of this section is devoted to the proof of
Theorem~\ref{T:recurrencepoly}.

\subsection{Proof of Theorem~\ref{T:recurrencepoly}  modulo a technical result}\label{SS:mainargument}
Our argument is similar to the one used to prove Theorem~B' and is
carried out in two steps. We first show that it suffices to verify a certain
multiple recurrence property for nilsystems, and we then verify this property
using an equidistribution result on nilmanifolds.

The reduction to nilsystems step is a direct consequence of Theorem~\ref{T:HoKra} and the
 following result
which serves as a substitute for
Proposition~\ref{P:seminorm'}:
\begin{proposition}\label{P:seminorm2}
  Let $(X,\mathcal{B},\mu,T)$ be a system and
  $f_0,f_1,\ldots,f_\ell\in L^\infty(\mu)$ be such that  $f_i \bot \mathcal{Z}$ for
  some $i=0,1,\ldots,\ell$, where $\mathcal{Z}$  is
  the nilfactor of the system. For fixed $k,r\in\N$ consider the polynomials
  $p_m(n)=r (mn^k+q_m(n))$ where $\deg(q_m)\leq k-1$, $m\in\N$, and let $(N_m)_{m\in\N}$
  be a sequence of positive integers with $N_m\to \infty$.

   Then
  \begin{equation}\label{E:AM}
 \lim_{M\to \infty} \frac{1}{M}\sum_{m=1}^M \Big|\frac{1}{N_m}\sum_{n=1}^{N_m} \int f_0\cdot  T^{p_m(n)} f_1
    \cdot T^{2p_m(n)}f_2\cdot\ldots\cdot T^{\ell p_m(n)}f_\ell\ d\mu \Big|=0.
  \end{equation}
\end{proposition}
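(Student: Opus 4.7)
The plan is to adapt the strategy behind the model result Proposition~\ref{P:seminorm'} via a PET-type induction in the variable $n$, crucially exploiting the fact that although the polynomials $q_m$ are uncontrolled (they depend on $m$ in an essentially arbitrary way), $\deg q_m\le k-1$. Consequently, after sufficiently many applications of the van der Corput inequality in $n$, the contributions of $q_m$ will be differenced away and one is left with a clean polynomial average to which Corollary~\ref{C:L2'} can be applied.

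First, by Cauchy--Schwarz applied to the outer average over $m$, it suffices to control
\[
\frac{1}{M}\sum_{m=1}^M\Bigl|\frac{1}{N_m}\sum_{n=1}^{N_m}\int f_0\cdot T^{p_m(n)}f_1\cdots T^{\ell p_m(n)}f_\ell\,d\mu\Bigr|^2.
\]
Expanding the square as a double $n,n'$ sum and applying Lemma~\ref{L:VDC} to the modulus-squared inner average over $n$ introduces a difference parameter $h_1$ and replaces the integrals by expressions involving shifts by the polynomials $i\bigl(p_m(n+h_1)-p_m(n)\bigr)$, whose degree in $n$ is one less than that of $p_m(n)$. Factoring out common shifts and applying Cauchy--Schwarz at each step, we iterate van der Corput in $n$ enough times so that every occurrence of the polynomial $q_m$ gets differenced away (which occurs after at most $k$ iterations, since $\deg q_m\le k-1$), while the leading contribution $rmn^k$ survives as iterated $n$-differences of $rmn^k$, which are polynomials in $(m,h_1,\dots,h_k)$ alone.

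After this iteration, one arrives at an average of the form
\[
\frac{1}{|\Phi_M|}\sum_{\mathbf{v}\in\Phi_M}\Bigl|\int F_0\cdot\prod_{j=1}^{s}T^{P_j(\mathbf{v})}F_j\,d\mu\Bigr|^2
\]
where $\mathbf{v}=(m,h_1,\dots,h_k)$, $\Phi_M\subset\Z^{k+1}$ is a suitably chosen F{\o}lner sequence, each $P_j$ is a non-constant polynomial in $\Z[m,h_1,\dots,h_k]$ whose leading monomial is an integer multiple of $rm\,h_1\cdots h_k$, and the $F_j$ are tensor products of the original $f_i$ and their complex conjugates. Since by hypothesis some $f_i$ is orthogonal to the nilfactor $\cZ$, the corresponding tensor $F_{j_0}$ is orthogonal to the nilfactor of the associated product transformation, and Corollary~\ref{C:L2'} applied to the polynomial family $\{P_j\}$ along the F{\o}lner sequence $\Phi_M$ in $\Z^{k+1}$ delivers convergence to $0$.

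The main obstacle, which accounts for the remark that this reduction step is ``technically much more involved'' than in the linear model, is the PET bookkeeping required to guarantee that after the van der Corput iteration the resulting polynomial family $\{P_j\}$ in $(m,h_1,\dots,h_k)$ is essentially distinct, so that Leibman's Theorem~\ref{T:L2} applies. Because the $q_m$ depend on $m$ in an uncontrolled way, one cannot track the full polynomials through the induction as in the standard PET scheme; instead, at each van der Corput step one must majorize the intermediate $L^\infty$ factors that still involve $q_m$, and eventually replace the iterated expression by a purely polynomial one (discarding $q_m$ once it has been differenced sufficiently many times), so that PET can be applied to a clean polynomial family whose leading behavior is explicit in $r$ and $m$ and whose essential distinctness can be verified directly.
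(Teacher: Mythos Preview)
Your overall strategy---iterate van der Corput in $n$ until the exponents no longer depend on $n$, then feed the resulting multiparameter polynomial family into Corollary~\ref{C:L2'}---is exactly the paper's, but the execution you describe has a real gap. The assertion that one van der Corput step leaves only polynomials of the form $i\bigl(p_m(n+h_1)-p_m(n)\bigr)$, all of degree $k-1$ in $n$, is false once $\ell\ge 2$: after factoring out (say) $T^{p_m(n)}$, the new family contains not only the shifted differences $ip_m(n+h_1)-p_m(n)$ but also the polynomials $(i-1)p_m(n)$ for $i=2,\dots,\ell$, which still have degree $k$. The degree therefore does not drop uniformly, so the claim ``at most $k$ iterations suffice'' is wrong. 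One has to run the full PET induction on the type of the family $\{p_m,2p_m,\dots,\ell p_m\}$, and the number of steps depends on both $k$ and $\ell$ (this is the content of Lemmas~\ref{L:tolinear} and~\ref{L:VDCmain}).

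More importantly, the mechanism by which the uncontrolled polynomials $q_m$ disappear is not ``differenced away during the iteration''; they persist in the intermediate families throughout the PET. What actually happens is that at the \emph{end} of the induction each output exponent is an integer combination $P_{m,i}(n)=\sum_j c_j\,p_m(n+h'_j)$ that is \emph{constant in $n$}. Now observe (this is Lemma~\ref{L:leading}): if $\sum_j c_j(n+h'_j)^k$ is constant in $n$, then repeated differentiation forces $\sum_j c_j(n+h'_j)^s\equiv 0$ for every $0\le s\le k-1$. Hence the entire contribution of $q_m(n)=\sum_{s<k}a_{s,m}n^s$ to $P_{m,i}$ vanishes automatically, and one is left with $P_{m,i}=rm\cdot P_i(h)$ for fixed polynomials $P_i$ independent of $m$ and of the $q_m$. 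It is this algebraic identity, applied only after the PET has terminated, that produces a genuine essentially distinct polynomial family in the variables $(m,h_1,\dots,h_t)$, to which Corollary~\ref{C:L2'} then applies. Your sketch does not identify this step, and without it one cannot justify that the final family is independent of $m$ (through $q_m$) or that it is essentially distinct.
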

The verification of the multiple recurrence property for nilsystems is
based on the following equidistribution result which serves as a substitute for
 Proposition~\ref{L:equidistribution}:
\begin{proposition}\label{P:equidistributiongeneral}
  Let $X=G/\Gamma$ be a connected  nilmanifold
  and let  $a\in G$ be an ergodic nilrotation. Suppose that
   $(q_m)_{m\in \N}$ is a sequence of integer polynomials with degree at
  most $k-1$, and  let $(N_m)_{m\in\N}$
  be a sequence of positive integers with $N_m\to \infty$.

  Then for every $F\in C(X)$ we have
$$
\lim_{M\to\infty} \frac{1}{M}\sum_{m=1}^M \Big(\frac{1}{N_m}\sum_{n=1}^{N_m}
F(a^{mn^k+q_m(n)}\Gamma)\Big)= \int F \ dm_X.
$$
\end{proposition}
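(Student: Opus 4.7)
The strategy follows very closely the proof of Proposition~\ref{L:equidistribution}, which is the special case $k=1$, $q_m \equiv c_m$. After approximating $F \in C(X)$ by Lipschitz functions, it suffices to show that for every $\delta > 0$, the set of $m \in \N$ for which the finite sequence $(a^{mn^k + q_m(n)}\Gamma)_{1 \le n \le N_m}$ fails to be $\delta$-equidistributed in $X$ has density zero. Granting this, the inner averages lie within $O(\delta)$ of $\int F\, dm_X$ on a density-one set of $m$, and are uniformly bounded by $\|F\|_\infty$ otherwise, so the outer Ces\`aro average converges to $\int F\, dm_X$ as $M \to \infty$; letting $\delta \to 0$ completes the argument.

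To verify the density-zero claim, I apply Theorem~\ref{T:GT2} to the degree-$k$ polynomial sequence $n \mapsto a^{mn^k + q_m(n)}$ in $G$. If the sequence fails to be $\delta$-equidistributed in $X$ for a particular $m$, there is a nontrivial quasi-character $\psi$ with $\|\psi\| \le \delta^{-C}$ satisfying
$$
\|\psi(a^{mn^k + q_m(n)})\|_{C^\infty[N_m]} \le c_1 \delta^{-C}.
$$
Identifying the affine torus with $\T^l$ via Proposition~\ref{P:FK}, the nilrotation $a$ becomes a unipotent affine transformation and $\psi(a^N) = e(p(N))$ for a real polynomial $p$ of some degree $d \ge 1$ which is bounded in terms of $X$. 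As in equation \eqref{E:beta}, the leading coefficient of $p$ has the form $\alpha = \frac{1}{d!}\sum_{i=1}^s r_i \alpha_i$, where the $\alpha_i$ are the rotation numbers of the horizontal projection of $a$ and $|r_i| \le c_2 \delta^{-C}$; since $1,\alpha_1,\ldots,\alpha_s$ are rationally independent by ergodicity of the horizontal rotation, $\alpha$ is irrational, and as $\psi$ varies over quasi-characters of bounded frequency magnitude $\alpha$ takes only finitely many values.

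The key observation --- and the reason the argument of Proposition~\ref{L:equidistribution} carries over with essentially no change --- is that since $\deg q_m \le k-1$, the substitution $N \mapsto mn^k + q_m(n)$ yields a polynomial in $n$ whose top coefficient (of $n^{dk}$) is exactly $\alpha m^d$, completely independent of the auxiliary polynomial $q_m$. By the definition of $\|\cdot\|_{C^\infty[N_m]}$ in \eqref{E:norms}, the hypothesized upper bound forces
$$
\|(dk)!\,\alpha\, m^d\| \le \frac{c_1 \delta^{-C}}{N_m^{dk}},
$$
and since $\tilde{\alpha} = (dk)!\,\alpha$ is irrational and takes only finitely many values while $d$ ranges over a fixed finite set of positive integers, Lemma~\ref{L:density0} implies that the set of such $m$ has density zero, as required. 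I expect the only mildly delicate point to be confirming that $\psi(a^N)$ is genuinely nonconstant in $N$, i.e., that $d \ge 1$; this follows from the observation that a nontrivial quasi-character integrates to zero against $m_X$, while a constant value of $\psi(a^N)$ would, via the equidistribution of $(a^n\Gamma)$ on $X$, force a nonzero integral.
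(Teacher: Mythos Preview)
Your proposal is correct and follows exactly the approach intended by the paper, whose proof consists solely of the remark that the argument is identical to that of Proposition~\ref{L:equidistribution}. You have correctly identified the one point that needs checking in the adaptation---namely that because $\deg q_m\le k-1$, the top coefficient of $\psi(a^{mn^k+q_m(n)})$ in $n$ is $\alpha m^d$, independent of $q_m$---and your remarks on the $(dk)!$ factor and on $d\ge 1$ make explicit details that the paper leaves implicit.
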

\begin{proof}
  The argument is identical to the one used to prove
  Proposition~\ref{L:equidistribution}.
\end{proof}

\begin{proof}[Proof of Theorem~\ref{T:recurrencepoly}]
  Using Proposition~\ref{P:seminorm2} and
  Proposition~\ref{P:equidistributiongeneral} the argument is
  identical to the one used to finish the proof of Theorem~B'
  (Section~\ref{SS:warmupconclusion}).
\end{proof}
\subsection{Proof of Proposition~\ref{P:seminorm2}}
%%Since the proof of Proposition~\ref{P:seminorm2} is rather technical, the
%%reader is advised to look first at the proof of  Lemma~\ref{L:seminorm}
%% which explains  our argument in its simplest possible setting (see also
%%Example~\ref{Ex:vdc}).
The
proof  of Proposition~\ref{P:seminorm2} is carried out in two steps.
First we establish an estimate
about general polynomial families using an inductive argument that is
frequently used when one deals with multiple ergodic averages along
polynomial iterates. We then apply this estimate to show that the
ergodic averages we are interested in are majorized by some
multi-parameter polynomial ergodic averages. A known result  shows
that these averages are controlled by nilsystems, so the same should
be the case for our averages.
\subsubsection{A PET induction argument}
We start with some notational conventions that we  use henceforth: If
$h=(h_1,\ldots,h_r)$, $H=(H_1,\ldots,H_r)$, when we write $1\leq h\leq
H$ we mean $1\leq h_i\leq H_i$ for $i=1,\ldots,r$, and when we write
$|H|$ we mean $H_1\cdot\ldots\cdot H_r$. With $o_{h}(1)$ we denote an
expression that converges to zero when $h_1,\ldots,h_r\to\infty$, and
for $N\in\N$ we denote by $o_{h, N, h\prec N}(1)$ a quantity that goes
to $0$ if $h_i,N\to\infty$ and $h_i/N\to 0$.

We briefly review some notions from \cite{HK2} (most of which where
introduced in \cite{Be2}).  We say that a property holds for
\emph{almost every $h\in\Z^r$} if it holds outside of  a subset of
$\Z^r$ with upper density zero. We remark that the set of zeros of any
non-identically zero polynomial $p\colon \Z^r \to \Z$ has zero upper
density.
If $a_0,a_1, \ldots, a_k\colon \Z^r\to \Z$ are integer polynomials, with $a_k$ not identically zero,  we call a
function $p\colon \Z^{r+1}\to \Z$ defined by
$p(h,n)=a_k(h)n^k+\cdots+a_1(h)n+a_0(h)$ an \emph{integer polynomial
  with $r$ parameters and degree $k$}. If $p(h,n)$ has
  degree $k$, then for almost every $h\in \Z^r$, the degree of
  the polynomial $p(h,n)$, with respect to the variable $n$ is $k$.
  A set
$\mathcal{P}=\{p_1(h,n),\ldots,p_k(h,n)\}$, where $p_i(h,n)$ are
integer polynomials with $r$ parameters, is called a \emph{family of
  integer polynomials with $r$ parameters}. The polynomials in $\mathcal{P}$ are
  \emph{non-constant} if they all have positive degree,  and \emph{essentially
  distinct} if  all their pairwise differences   have positive degree.
The maximum degree of the polynomials  is called the \emph{degree} of the polynomial family and
is denoted by $\text{deg}(\mathcal{P})$.  Given a
polynomial family $\mathcal{P}$ with several parameters, let $\mathcal{P}_i$ be the subfamily
of polynomials of degree $i$ in $\mathcal{P}$. We let $w_i$ denote the
number of distinct leading coefficients that appear in the family
$\mathcal{P}_i$. The vector $(d,w_d,\ldots,w_1)$ is called the
\emph{type} of the polynomial family $\mathcal{P}$.

We shall use an induction scheme, often called PET induction
(Polynomial Exhaustion Technique), on types of polynomial families
that was introduced by Bergelson in \cite{Be2}.  To do this we order
the set of all possible types lexicographically, this means,
$(d,w_d,\ldots,w_1)>(d',w_d',\ldots,w_1')$ if and only if in the first
instance where the two vectors disagree the coordinate of the first
vector is greater than the coordinate of the second vector.

\begin{lemma}\label{L:tolinear}
  Let $\mathcal{P}=\{p_1,\ldots,p_k\}$ be a family of non-constant essentially
  distinct polynomials with $r$ parameters such that $\text{deg}(\mathcal{P})=\text{deg}(p_1)$,
    $(X,\mathcal{B},\mu,T)$  be a system, and
   $f_1,\ldots,f_k\in L^\infty(\mu)$ with
  $\norm{f_i}_{L^\infty(\mu)}\leq 1$ for $i=1,\ldots, k$.

  Then  there exist $s\in\N$,
  depending only on the type of $\mathcal{P}$,  and
   $\tilde{r}\in \N$ depending only on $r$ and the type of $\mathcal{P}$,  a family of non-constant
   essentially distinct
  linear polynomials $\{q_1,\ldots, q_s\}$ with  $r+\tilde{r}$ parameters,
  and functions $g_1,\ldots, g_s \in L^\infty(\mu)$
  $($independent of $h$ and $\tilde{h}$$)$ with $\norm{g_i}_{L^\infty(\mu)}\leq 1$
  and $g_1=f_1$,  such that for every $h\in \Z^r$ we have
  \begin{multline}\label{E:CSVDC}
    \norm{\frac{1}{N}\sum_{n=1}^N T^{p_1(h,n)}f_1\cdot
      \ldots\cdot
    T^{p_k(h,n)}f_k}_{L^2(\mu)}^{2^{\tilde{r}}}\ll \\
   \frac{1}{|\tilde{H}|}\sum_{1\leq \tilde{h}\leq \tilde{H}}\norm{\frac{1}{N}
    \sum_{n=1}^N  T^{q_1(\tilde{h},h, n)}g_1\cdot \ldots\cdot
      T^{q_s(\tilde{h},h,n)}g_s}_{L^2(\mu)} +o_{N,\tilde{H}, \tilde{H}\prec N}(1)
  \end{multline}
  where $\tilde{H}=(H_1,\ldots,H_{\tilde{r}})$, and the implied constant depends only on  the type of
  the polynomial family  $\mathcal{P}$.
  %% , and by $o_{N,H, H\prec N}(1)$ we denote a quantity that goes to
  %% $0$ as $H,N\to\infty$ in a way that %%$H/N\to 0$.
\end{lemma}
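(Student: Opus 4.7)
The proof is by induction on the type of $\mathcal P$ in the lexicographic order on types, terminating when $\deg(\mathcal P)=1$. In the base case $\deg(\mathcal P)=1$ set $\tilde r=0$, $s=k$, $q_i=p_i$, $g_i=f_i$; the claimed inequality is trivially an equality.

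For the inductive step, apply van der Corput (Lemma~\ref{L:VDC}) to the vectors $v_n=\prod_{i=1}^k T^{p_i(h,n)}f_i$, obtaining
$$\norm{\frac{1}{N}\sum_{n=1}^{N} v_n}_{L^2(\mu)}^2 \ll \frac{1}{H}\sum_{\tilde h=1}^{H}\Big|\frac{1}{N}\sum_{n=1}^{N} \int v_{n+\tilde h}\,\overline{v_n}\,d\mu\Big| + o_{N,H,\,H\prec N}(1).$$
Pick $i_0\neq 1$ with $\deg p_{i_0}$ minimal among $p_2,\dots,p_k$ (the degenerate case $k=1$ is handled by subtracting $p_1$ itself). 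Since $T$ preserves $\mu$, apply $T^{-p_{i_0}(h,n)}$ inside each integral: the factor indexed by $i_0$ in $\overline{v_n}$ collapses to the $n$-independent function $\overline{f_{i_0}}$, pulls out, and Cauchy--Schwarz together with $\norm{f_{i_0}}_{L^\infty(\mu)}\leq 1$ bounds the resulting integral by the $L^2$ norm of the remaining average. This yields
$$\norm{\frac{1}{N}\sum_{n=1}^{N} v_n}_{L^2(\mu)}^2 \ll \frac{1}{H}\sum_{\tilde h=1}^{H}\norm{\frac{1}{N}\sum_{n=1}^{N} \prod_{i=1}^k T^{p_i(h,n+\tilde h)-p_{i_0}(h,n)}f_i\cdot\prod_{i\neq i_0} T^{p_i(h,n)-p_{i_0}(h,n)}\overline{f_i}}_{L^2(\mu)}+o(1),$$
an average indexed by $(\tilde h,h)\in\Z^{r+1}$ over a new family $\mathcal P'$ of $2k-1$ polynomials in $n$, containing $p_1(h,n+\tilde h)-p_{i_0}(h,n)$ paired with the original function $f_1$; reindexing so this polynomial is listed first will produce $g_1=f_1$ at the end.

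The key combinatorial fact is that $\mathcal P'$ has strictly smaller type than $\mathcal P$ in the lexicographic order, and the polynomial carrying $f_1$ has degree equal to $\deg(\mathcal P')$ (possibly after re-selecting $i_0$ among the minimum-degree polynomials to avoid leading-coefficient cancellation with $p_1$, which is permitted by essential distinctness). Essential distinctness of the members of $\mathcal P'$ holds for $\tilde h$ outside a zero-density set, which can be absorbed into the error term. This PET bookkeeping is standard; see~\cite{Be2} and~\cite{L3}. Applying the inductive hypothesis to $\mathcal P'$ yields $\tilde r'$, $s$, a family of essentially distinct linear polynomials $q_1,\dots,q_s$ in $r+1+\tilde r'$ parameters, and bounded functions $g_1=f_1,g_2,\dots,g_s$ such that the inner $L^2$ norm raised to the $2^{\tilde r'}$-th power is dominated by an average over $\tilde h'\in[1,\tilde H']$ of the desired linear-polynomial $L^2$ averages. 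Raising the displayed van der Corput estimate to the $2^{\tilde r'}$-th power, applying the power-mean inequality $(\tfrac{1}{H}\sum a_{\tilde h})^{2^{\tilde r'}}\leq \tfrac{1}{H}\sum a_{\tilde h}^{2^{\tilde r'}}$, and chaining with the inductive bound produces the conclusion of the lemma with $\tilde r=\tilde r'+1$ and $\tilde H=(H,\tilde H')$.

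The main obstacle is the PET bookkeeping: verifying that the type strictly decreases, that $i_0$ can always be chosen so the polynomial carrying $f_1$ has maximal degree in $\mathcal P'$, and that essential distinctness of $\mathcal P'$ persists. This requires a case split according to whether $\deg p_{i_0}<\deg(\mathcal P)$ (in which case the top degree is preserved but the number of distinct leading coefficients at that degree strictly decreases, because $p_{i_0}$ leaves the top stratum and the $\tilde h$-shift introduces only lower-order perturbations) or $\deg p_{i_0}=\deg(\mathcal P)$ (in which case every member of $\mathcal P'$ has degree strictly less than $\deg(\mathcal P)$ and the top degree drops outright). These verifications are routine features of the PET reduction but constitute the technical core of the argument.
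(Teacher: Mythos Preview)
Your overall strategy (PET induction, van der Corput, factor out a minimal-degree polynomial, Cauchy--Schwarz) matches the paper's, but there is a genuine gap in the inductive step when $\mathcal P$ contains linear polynomials other than $p_{i_0}$.

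Your claim that ``essential distinctness of the members of $\mathcal P'$ holds for $\tilde h$ outside a zero-density set'' is false in that situation. If $p_i$ is linear with $i\neq i_0$, then the two members of $\mathcal P'$ it produces, namely $p_i(h,n+\tilde h)-p_{i_0}(h,n)$ and $p_i(h,n)-p_{i_0}(h,n)$, differ by $p_i(h,n+\tilde h)-p_i(h,n)$, which has degree $0$ in $n$ for \emph{every} $\tilde h$. So these two polynomials are never essentially distinct, and you cannot invoke the inductive hypothesis on $\mathcal P'$ as stated. If instead you combine them into a single term, the resulting function is $T^{c(h,\tilde h)}f_i\cdot\overline{f_i}$ with $c$ depending on $\tilde h$; this violates the conclusion's requirement that the $g_j$ be independent of the new parameters, so the inductive hypothesis still does not apply. (Concretely, take $\mathcal P=\{n^2,2n,n\}$ with $i_0=3$: after one van der Corput step the pair $n+2\tilde h$ and $n$ collapses to a single linear term carrying $T^{2\tilde h}f_2\cdot\overline{f_2}$.)

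The paper resolves this by splitting the inductive step into two cases according to whether the minimal degree $d_0$ equals $1$ or is at least $2$. In the linear case it does \emph{not} immediately call the inductive hypothesis; instead it applies van der Corput repeatedly (a total of $w_1$ times) to strip off \emph{all} linear terms, using the sup-norm bound to discard the $\tilde h$-dependent functions at each stage, and only then invokes the induction on a family with no degree-$1$ members and with parameter-independent functions. Your sketch needs this extra layer of iteration before the inductive call is legitimate.
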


\begin{proof}
We remark that  throughout the proof all the implied constants depend only on the type of
  the polynomial family  $\mathcal{P}$.

  We shall use  induction on the type of the polynomial family $\mathcal{P}$.
  Assume that the statement holds for all polynomial families with several parameters
   and
  type less than $(d,w_d,\ldots,w_1)$, and suppose that $\{p_1,\ldots,p_k\}$ is
  a polynomial family with $r$ parameters and type $(d,w_d,\ldots,w_1)$.
   Let  $d_0$ be
  the first positive integer for which $w_{d_0}\neq 0$.
  %% so the
  %%polynomial family has type $(d,w_d,\ldots,w_{d_0},0\ldots,0)$.
  Without loss of generality we can assume that the polynomial $p_k$
  has minimal degree and is such that  the polynomials
 $p_i(h,n)-p_k(h,n)$, $p_j(h,n+h_{1})-p_k(h,n)$,
 for  $i=1,\ldots,k-1$, $j=1,\ldots,k$, have degree less than or equal
  to the
 degree of the polynomial $p_1(h,n)-p_k(h,n)$.
 Furthermore, we can assume that $\deg(p_1)\geq
  2$, otherwise all polynomials are already linear (since $p_1$ has
  maximal degree), in which case there is nothing to prove. In order to
  carry out the inductive step we consider two cases.

  {\bf Case 1.} Suppose that $\deg(p_k)=d_0\geq 2$.
  %% Using the Cauchy Schwarz inequality and the estimate
  %% $\norm{f_0}_{L^\infty(\mu)}\leq 1$ we get
%%$$
%%\Big|\E_{1\leq n\leq N}\int f_0 \cdot T^{p_1(n)}f_1\cdot \ldots\cdot
%% T^{p_k(n)}f_k \ d\mu \Big|^{2}\leq\norm{\E_{1\leq n\leq N}
%%   T^{p_1(n)}f_1\cdot \ldots\cdot T^{p_k(n)}f_k}_{L^2(\mu)}^{2}
%%$$
Let  $\tilde{r}\in \N$  be the integer that is determined by the induction hypothesis. We are going to estimate
\begin{equation}\label{E:lkmn}
\norm{\frac{1}{N}\sum_{n=1}^N T^{p_1(h,n)}f_1\cdot
      \ldots\cdot
      T^{p_k(h,n)}f_k}_{L^2(\mu)}^{2^{\tilde{r}+1}}.
      \end{equation}
  We use Lemma~\ref{L:VDC}, then  factor out the measure preserving transformation
  $T^{p_k(h,n)}$ from the resulting integrals, and use the Cauchy
  Schwarz inequality. Since the sup norm of all
  functions is bounded by $1$, we find that for  every  $h\in \Z^r$ the expression in
  \eqref{E:lkmn} is bounded by some constant times
  %% \begin{multline}\label{E:CSVDC'}
  %%   \E_{1\leq h_i\leq H_i}\Big|\E_{1\leq n\leq N}\int f_0 \cdot
  %%   T^{p_1(h_1,\ldots,h_r,n)}f_1\cdot %%\ldots\cdot
  %%   T^{p_k(h_1,\ldots,h_r,n)}f_k \ d\mu \Big|^{2^{l_1}}\leq\\
  \begin{equation}\label{E:CSVDC'}
    \frac{1}{H_1}\sum_{h_1=1}^{H_1}\norm{\frac{1}{N}\sum_{n=1}^N T^{\tilde{p}_1(h_1,h,n)}\tilde{f}_1\cdot \ldots\cdot
      T^{\tilde{p}_{l}(h_1,h,n)}\tilde{f}_l}_{L^2(\mu)}^{2^{\tilde{r}}} +o_{N,H_1, H_1\prec N}(1),
  \end{equation}
  where $l=2k-1$, $\tilde{f}_1=f_1$, $\tilde{p}_1(h_{1},h,n)= p_1(h,n)-p_k(h,n)$,
  the functions $\tilde{f}_2,\ldots, \tilde{f}_{l}$ are bounded by $1$ and do not depend on the parameters
  $h_{1}$, $h$, and the polynomials $\{\tilde{p}_2,\ldots,\tilde{p}_l\}$ have the form
$$
p_i(h,n)-p_k(h,n), \ \text{ or } \ p_j(h,n+h_{1})-p_k(h,n)
$$
for some  $i=2,\ldots,k-1$ and  $j=1,\ldots,k$.
%% \begin{gather*}
%%   q_1=p_1(h_1,\ldots,h_r,n)-p_k(h_1,\ldots,h_r,n),\ldots,q_{k-1}=p_{k-1}(h_1,\ldots,h_r,n)-p_k(h_1,\ldots,h_r,n),\\
%%   q_k=p_1(h_1,\ldots,h_r,n+h_{r+1})-p_k(h_1,\ldots,h_r,n),\ldots, %%q_{2k-1}=p_k(h_1,\ldots,h_r,n+h_{r+1})-p_k(h_1,\ldots,h_r,n)
%% \end{gather*}
It is easy to verify that  $\{\tilde{p}_1,\ldots,\tilde{p}_{l}\}$ is a family of
non-constant essentially distinct polynomials with  $r+1$ parameters,  type
strictly  smaller than the type of the family $\mathcal{P}$, and the
polynomial $\tilde{p}_1$  has maximal degree.  Therefore, we can apply the induction
hypothesis   to give
a bound for the norm that appears in \eqref{E:CSVDC'}.
 Putting these estimates together
we produce the desired bound, completing the inductive step.

{\bf Case 2.} Suppose that $\deg(p_k)=d_0=1$.  After possibly
rearranging the polynomials $p_2,\ldots, p_{k-1}$ we can assume that
$\deg{p_i}=1$ if and only if $i\geq k_0$, for some $k_0$ that
satisfies $2\leq k_0\leq k$.  Notice that since the polynomials $p_i$
are linear for $i\geq k_0$, for every $h_1\in\N$ we have
\begin{equation}\label{E:linear}
  p_k(n+h_{1})-p_k(n)=p_k(h_1), \ p_i(n+h_{1})-p_k(n)=p_i(n)-p_k(n)+p_i(h_1)
  \text{ for } i=k_0,\ldots,k-1.
\end{equation}

Arguing as in Case $1$ and keeping in mind the identities
\eqref{E:linear} we get the estimate
\begin{multline}\label{E:bvc}
  \norm{\frac{1}{N}\sum_{n=1}^N T^{p_1(h,n)}f_1\cdot \ldots\cdot
    T^{p_k(h,n)}f_k}_{L^2(\mu)}^2\leq\\ \frac{4}{H_1}
    \sum_{h_1=1}^{H_1}\norm{\frac{1}{N}\sum_{n=1}^N T^{\tilde{p}_1(h_1,h,n)}\tilde{f}_1\cdot \ldots\cdot
    T^{\tilde{p}_{l}(h_1,h,n)}\tilde{f}_{l}}_{L^2(\mu)} +o_{N,H_1, H_1\prec N}(1),
\end{multline}
where  $l=k-k_0-2$, $\tilde{f}_1=f_1$,
$\tilde{p}_1(h_{1},h,n)= p_1(h,n)-p_k(h,n)$, and the polynomials $\tilde{p}_2,\ldots,\tilde{p}_l$
have the form
$$
p_i(h,n)-p_k(h,n), \ \text{ or } \ p_j(h,n+h_{1})-p_k(h,n)
$$
for some $i=2,\ldots,k-1$ and $j=1,\ldots,k_0-1$.  It is easy to verify that
 $\{\tilde{p}_1,\ldots,\tilde{p}_{l}\}$ is a family of
non-constant essentially distinct polynomials with  $r+1$ parameters  and type
strictly  smaller than the type of the family $\mathcal{P}$.

Note that in this case some of the functions $\tilde{f}_i$ may depend on
$h_{1}$, but this can happen only for those indices $i$ for which
$\deg(p_i)=1$. In order to get rid of these functions  we
use Lemma~\ref{L:VDC} again to get a bound for the expression
$$
\norm{\frac{1}{N}\sum_{n=1}^N T^{\tilde{p}_1(h_1,h,n)}\tilde{f}_1\cdot
  \ldots\cdot T^{\tilde{p}_{l}(h_1,h,n)}\tilde{f}_{l}}_{L^2(\mu)}
$$
that involves one less linear term.  After repeating this step a
finite number of times ($w_1-1$ in total), we eventually get an expression without any linear terms
(see Example~\ref{Ex:vdc}).
Combining this with the estimate \eqref{E:bvc} we get
\begin{multline}
  \norm{\frac{1}{N}\sum_{n=1}^N T^{p_1(h,n)}f_1\cdot \ldots\cdot
    T^{p_k(h,n)}f_k}_{L^2(\mu)}^{2^{w_1}}\ll\\
  \frac{1}{|\tilde{H}|}\sum_{1\leq \tilde{h}\leq \tilde{H}}\norm{\frac{1}{N}\sum_{n=1}^N
   T^{\tilde{p}_1(\tilde{h},h,n)}\tilde{f}_1\cdot \ldots\cdot
    T^{\tilde{p}_{l_1}(\tilde{h},h,n)}\tilde{f}_{l_1}}_{L^2(\mu)} +o_{N,\tilde{H}, \tilde{H}\prec N}(1)
\end{multline}
for some $l_1\in\N$, where $\tilde{h}=(h_1,\ldots,h_{w_1})$, $\tilde{H}=
(H_1,\ldots,H_{w_1})$.  The  family of non-constant essentially
distinct polynomials $\{\tilde{p}_1,\ldots,\tilde{p}_{l_1}\}$ has $r+w_1$ parameters, type
strictly smaller than the type of the family $\mathcal{P}$,
%%$(d,w_d,\ldots,w_2,0)$,
and the functions
$\tilde{f}_1,\ldots,\tilde{f}_{l_1}$ are bounded by $1$ and do not depend on the parameters $h$, $\tilde{h}$.  We can
now use the induction hypothesis, as in Case $1$, to carry out the
inductive step and complete the proof.
\end{proof}

We illustrate the method used in the  previous proof with the following example:
\begin{example}\label{Ex:vdc}
We start with polynomial family $\{n,2n,n^2\}$ that has  type $(2,1,2)$  and study the corresponding
multiple ergodic averages
$$
\frac{1}{N}\sum_{n=1}^N T^{n^2}f_1\cdot T^{2n}f_2\cdot T^n f_3.
$$
Since this expression involves linear terms, we  perform the operation described in Case 2.
We are led to study an
 average over $h_1$ of a power of the  $L^2$ norms of the  averages
$$
%%\frac{1}{H_1}\sum_{1\leq h_1\leq H_1}
\frac{1}{N}
\sum_{n=1}^N T^{(n+h_1)^2-n}\overline{f}_1
\cdot T^{n^2-n}f_1\cdot T^{n}(T^{2h_1}\overline{f}_2 \cdot f_2),
$$
which involves a polynomial family with $1$ parameter that has  type $(2,1,1)$.
We perform one more time the operation described in Case 2. We are led to study an
average over $h_1,h_2$ of a power of the $L^2$ norms of the averages
$$
\frac{1}{N}\sum_{n=1}^N T^{(n+h_1+h_2)^2-2n-h_2}f_1
\cdot T^{(n+h_1)^2-2n}\overline{f}_1\cdot T^{(n+h_2)^2-2n-h_2}\overline{f}_1
\cdot T^{n^2-2n}f_1,
$$
which involves a polynomial family with $2$ parameters that has  type $(2,1,0)$.
Since the resulting expression has no linear terms,  we  perform the operation described in Case 1. We are led to study an
average over $h_1,h_2, h_3$ of the $L^2$ norms of the averages
\begin{align*}
\frac{1}{N}\sum_{n=1}^N
&T^{2(h_1+h_2+h_3)n+(h_1+h_2+h_3)^2-h_2-2h_3}\overline{f}_1
\cdot T^{2(h_1+h_2)n+(h_1+h_2)^2-h_2}f_1\cdot \\
& T^{2(h_1+h_3)n +(h_1+h_3)^2-2h_3}f_1\cdot
T^{2h_1n+h_1^2}\overline{f}_1 \cdot
T^{2(h_2+h_3)n+(h_2+h_3)^2-h_2-2h_3}f_1\cdot
  \\
 &\qquad \qquad \qquad \qquad \qquad \qquad \qquad \qquad \qquad \qquad \qquad T^{2h_2n+h_2^2-h_2}\overline{f}_1\cdot
T^{2h_3n+h_3^2-2h_3}\overline{f}_1,
\end{align*}
which involves a family of linear polynomials with $3$ parameters that has type $(1,7)$.
\end{example}
Next we use  the previous lemma to  bound  some multiple ergodic averages involving a
collection of polynomials of a single variable
with some multiple ergodic averages involving a collection of polynomials of several variables
that have some convenient special form.
\begin{lemma} \label{L:VDCmain} Let $\mathcal{P}=\{p_1,\ldots,p_k\}$ be a family
  of non-constant essentially distinct polynomials, $(X,\mathcal{B},\mu,T)$ be a
  system, and $f_0,f_1, \ldots, f_k\in L^\infty(\mu)$ with
  $\norm{f_i}_{L^\infty(\mu)}\leq 1$.

  Then there exist $r,s \in \N$, non-constant
  essentially distinct polynomials $P_1,\ldots,P_s\colon \Z^r\to \Z$
  that are  independent of $n$ and each $P_i$ is an integer combination of polynomials of the form
  $p_i(n+\sum_{j\in J}h_j)$ where $J$ is some subset (possibly empty) of
  $\{1,\ldots,r\}$,
  and functions $F_0,F_1,\ldots,F_s\in L^\infty(\mu)$, such that
  $F_1=f_1$, and
  \begin{multline*}
    \Big| \frac{1}{N}\sum_{n=1}^N \int f_0\cdot
    T^{p_1(n)}f_1\cdot \ldots\cdot
    T^{p_k(n)}f_k \ d\mu \Big|^{2^r}\ll\\
    \frac{1}{|H|}\sum_{1\leq h\leq H}\Big|\int F_0 \cdot
    T^{P_1(h)}F_1\cdot \ldots\cdot T^{P_s(h)}F_s \ d\mu \Big| +o_{N,H,
      H\prec N}(1)
  \end{multline*}
  where $H=(H_1,\ldots,H_r)$ and the implied constant depends only on the type of
  the polynomial family  $\mathcal{P}$.
\end{lemma}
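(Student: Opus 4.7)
The plan is to combine Cauchy--Schwarz with Lemma~\ref{L:tolinear} and then iterate the van der Corput inequality (Lemma~\ref{L:VDC}) to eliminate the variable $n$ entirely, leaving only polynomials in newly introduced averaging parameters.

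First, by Cauchy--Schwarz in $L^2(\mu)$ applied to the inner product of $f_0$ with $\frac{1}{N}\sum_n \prod_i T^{p_i(n)}f_i$, one has
\[
\Big|\frac{1}{N}\sum_{n=1}^N \int f_0 \cdot \prod_i T^{p_i(n)}f_i \, d\mu\Big| \leq \|f_0\|_{L^2(\mu)} \cdot \Big\|\frac{1}{N}\sum_n \prod_i T^{p_i(n)}f_i\Big\|_{L^2(\mu)}.
\]
Raising to an appropriate power, Lemma~\ref{L:tolinear} (applied with no initial parameters) bounds the resulting power of the $L^2$-norm by an average over $\tilde h \in [1,\tilde H]^{\tilde r}$ of first powers of $\|\frac{1}{N}\sum_n \prod_{i=1}^{s_1} T^{q_i(\tilde h,n)}g_i\|_{L^2(\mu)}$. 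Here the $q_i(\tilde h,n) = c_i(\tilde h)n + d_i(\tilde h)$ are non-constant, essentially distinct, and linear in $n$, the $g_i$ are bounded by $1$ with $g_1 = f_1$, and each $q_i$ is an integer combination of shifted polynomials $p_j(n + \sum_{J}h_j)$ coming from Lemma~\ref{L:tolinear}.

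Next, I would iterate Lemma~\ref{L:VDC} to peel off $n$ from the linear iterates. For $v_n = \prod_{i=1}^{s_1} T^{c_i n + d_i}g_i$, one application yields
\[
\Big\|\frac{1}{N}\sum_n v_n\Big\|_{L^2(\mu)}^2 \ll \frac{1}{H}\sum_{h_*=1}^{H} \Big|\frac{1}{N}\sum_n \langle v_{n+h_*}, v_n\rangle\Big| + o_{N,H,H \prec N}(1),
\]
and factoring out $T^{c_j n + d_j}$ from the integrand of $\langle v_{n+h_*}, v_n\rangle$ produces $\int W_j(h_*) \cdot \prod_{i \neq j} T^{(c_i - c_j)n}G_i(h_*) \, d\mu$ with $W_j(h_*)$ and $G_i(h_*)$ independent of $n$. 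Thus the $j$-th linear iterate becomes $n$-free and the remaining $n$-dependent part has only $s_1 - 1$ linear iterates. Separating $W_j(h_*)$ by Cauchy--Schwarz and repeating this procedure (choosing a fresh index $j$ and introducing a new parameter at each step), after $s_1 - 1$ such iterations in total only a single $n$-dependent iterate $\frac{1}{N}\sum_n T^{\alpha n}F$ remains; a final application of Lemma~\ref{L:VDC}, using that $\langle T^{\alpha(n+h)}F, T^{\alpha n}F\rangle = \int T^{\alpha h}F \cdot \bar F \, d\mu$ is already $n$-independent, bounds its $L^2$-norm by an average over a fresh parameter of absolute values of integrals that contain no $n$ at all.

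Collecting all auxiliary parameters $\tilde h, h_*, \ldots$ into a single $h \in [1,H]^r$, the LHS raised to $2^r$ is bounded by $\frac{1}{|H|}\sum_h \big|\int F_0 \prod_{i=1}^{s} T^{P_i(h)}F_i \, d\mu\big| + o_{N,H,H \prec N}(1)$, as required. The functions $F_0, F_1, \ldots, F_s$ are products and complex conjugates of shifted copies of the $g_j$'s, hence of the original $f_j$'s, with $F_1 = g_1 = f_1$ preserved from Lemma~\ref{L:tolinear}. Each $P_i(h)$ arises, by construction, as an integer combination of shifts $p_j(n + \sum_{J}h_j)$ in which the $n$-terms cancel exactly. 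The main technical obstacle is the bookkeeping required to track the explicit form of the $P_i(h)$ through every van der Corput step and to verify that the resulting family $\{P_1, \ldots, P_s\}$ is non-constant and essentially distinct, since these properties are needed in order to apply downstream results on multiple polynomial ergodic averages (such as Theorem~\ref{T:L2}); failure of essential distinctness may occur only on a subset of $h$ of density zero, whose contribution is absorbed in the $o(1)$ error term.
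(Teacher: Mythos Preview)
Your proposal is correct and follows essentially the same route as the paper's proof: Cauchy--Schwarz, then Lemma~\ref{L:tolinear} to reduce to linear iterates, then a van der Corput iteration (as in Lemma~\ref{L:seminorm}) to eliminate $n$, with the form of the $P_i$ tracked through the successive differencing operations $p(n)-q(n)$ and $p(n+h_i)-q(n)$. The paper records the two stages as producing $r_1$ and $r_2$ new parameters respectively (with $r=r_1+r_2$), and simply refers back to Lemma~\ref{L:seminorm} for the linear step rather than spelling it out as you do; your closing remark about essential distinctness failing only on a zero-density set of $h$ is a point the paper leaves implicit.
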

\begin{remark}
 Our assumptions force the polynomials $P_1,\ldots,P_s$
to have very special form, we are going to take advantage of this  property  later.
\end{remark}
\begin{proof}
We remark that  throughout the proof all the implied constants depend only on the type of
  the polynomial family  $\mathcal{P}$.

We are going to estimate the quantity
$$
\Big| \frac{1}{N}\sum_{n=1}^N \int f_0\cdot
    T^{p_1(n)}f_1\cdot \ldots\cdot
    T^{p_k(n)}f_k \ d\mu \Big|^{2^r}
$$
for some appropriate choice of $r$.
 We can assume that the polynomial $p_1$ has maximal degree. Indeed, if this is not the case,
we can  factor out   the measure preserving transformation $T^{p_{i_o}(n)}$ where
  $p_{i_o}$ is some polynomial of maximal degree and work with the resulting family.

Using  the Cauchy Schwarz inequality  and  Lemma~\ref{L:tolinear}
we get that   there exist $r_1,s_1\in\N$, depending
  only on the type of $\mathcal{P}$, a family of
  non-constant essentially distinct linear polynomials $q_1,\ldots, q_{s_1}$ with
  $r_1$ parameters,
  and functions $\tilde{f}_1,\ldots,\tilde{f}_{s_1}\in
  L^{\infty}(\mu)$, such that $\tilde{f}_1=f_1$ and
  \begin{multline}\label{E:CSVDC''}
    \Big| \frac{1}{N}\sum_{n=1}^N \int f_0\cdot
    T^{p_1(n)}f_1\cdot \ldots\cdot
    T^{p_k(n)}f_k \ d\mu \Big|^{2^{r_1}}\ll\\
    \frac{1}{|H|}\sum_{1\leq h\leq H} \norm{\frac{1}{N}\sum_{n=1}^N
     T^{q_1(h,n)}\tilde{f}_1\cdot \ldots\cdot
      T^{q_{s_1}(h,n)}\tilde{f}_{s_1}}_{L^2(\mu)} +o_{N,H, H\prec N}(1)
  \end{multline}
  where $H=(H_1,\ldots,H_{r_1})$. Since all the polynomials are
  linear, arguing as in the proof of Lemma~\ref{L:seminorm} we get
  that there exist $r_2,s_2 \in \N$, non-constant essentially distinct polynomials
  $P_1,\ldots,P_{s_2}\colon \Z^{r_2}\to\Z$, and functions
  $F_0,\ldots,F_{s_2}\in L^\infty(\mu)$, such that $F_1=\tilde{f}_1=f_1$, and
  \begin{multline}\label{E:CSVDC'''}
    \norm{\frac{1}{N}\sum_{n=1}^N T^{q_1(h,n)}\tilde{f}_1\cdot
      \ldots\cdot
      T^{q_{s_1}(h,n)}\tilde{f}_{s_1}}_{L^2(\mu)}^{2^{r_2}}\ll\\
    \frac{1}{|\tilde{H}|}\sum_{1\leq \tilde{h}\leq \tilde{H}}\Big|\int F_0 \cdot
    T^{P_1(\tilde{h},h)}F_1\cdot \ldots\cdot T^{P_{s_2}(\tilde{h},h)}F_{s_2} \ d\mu \Big|
    +o_{N,\tilde{H}, \tilde{H}\prec N}(1)
  \end{multline}
  where $\tilde{H}=(\tilde{H}_1,\ldots,\tilde{H}_{r_2})$. Combining \eqref{E:CSVDC''} and
  \eqref{E:CSVDC'''}, and noticing that $\{P_1(\tilde{h},h),\ldots
  P_{s_2}(\tilde{h},h)\}$ is a family of non-constant essentially distinct polynomials
  $\Z^{r_1+r_2}\to \Z$ we get the advertised estimate with
  $r=r_1+r_2$. Furthermore, looking at the proof of
  Lemma~\ref{L:tolinear} and Lemma~\ref{L:seminorm}, we see that the
  polynomials $P_i$ are constructed starting from the family
  $\{p_1,\ldots,p_k\}$ and performing  $r$ times one of the
  following two operations: $(i)$ form the polynomial $p(n)-q(n)$
  or $(ii)$
 form the polynomial  $p(n+h_i)-q(n)$, where $p$ and $q$ are already defined polynomials and $i\in \{1,\ldots,r\}$.  It follows that the polynomials $P_i$
  have the advertised form. This completes the proof.
\end{proof}

\subsubsection{Conclusion of the reduction} We shall now use
Lemma~\ref{L:VDCmain} to show that the nilfactor is characteristic for
the multiple ergodic averages related to the multiple recurrence
problem of Theorem~\ref{T:recurrencepoly}. We first need a simple
lemma:
\begin{lemma}\label{L:leading} Fix $k\in\N$. Let $(p_m)_{m\in\N}$ be a
  sequence of integer polynomials of the form $ p_m(n)=mn^k+q_m(n)$
  where $\deg(q_m)\leq k-1$ and for $l_i, h_i\in\Z$ let
$$
P_{m,n_1,\ldots,n_r}(n)=\sum_{i=1}^r l_i \ \! p_m(n+h_i).
$$

Then the leading coefficient of  $P_{m,h_1,\ldots,h_r}(n)$ has the form
$m \cdot P(h_1,\ldots, h_r)$ for some polynomial $P\colon \Z^r\to \Z$.
\end{lemma}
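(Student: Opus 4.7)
The plan is to expand the definition $p_m(n)=mn^k+q_m(n)$ and track which terms survive by degree. Writing
\[
P_{m,h_1,\ldots,h_r}(n)=m\sum_{i=1}^r l_i(n+h_i)^k+\sum_{i=1}^r l_i\,q_m(n+h_i),
\]
the second sum has degree at most $k-1$ in $n$, so it contributes only lower-order corrections. Applying the binomial theorem to the first sum yields
\[
m\sum_{i=1}^r l_i(n+h_i)^k=m\sum_{j=0}^k\binom{k}{j}e_j(h)\,n^{k-j},
\quad\text{where}\quad e_j(h):=\sum_{i=1}^r l_i h_i^j.
\]
Each $e_j(h)$ is manifestly an integer polynomial in $h_1,\ldots,h_r$, and the factor of $m$ appears cleanly because it is attached to the pure $mn^k$ part of $p_m$.

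Next I would compute the coefficient of $n^{k-j}$ coming from the ``tail'' $\sum_i l_i q_m(n+h_i)$. Writing $q_m(n)=\sum_{s=0}^{k-1}b_{m,s}n^s$ and expanding $(n+h_i)^s$ by the binomial theorem, a short reindexing shows that this coefficient is a linear combination of $e_0(h),e_1(h),\ldots,e_{j-1}(h)$, with scalars that depend on $m$ and the $b_{m,s}$ but not on $h$. Consequently the coefficient of $n^{k-j}$ in $P_{m,h_1,\ldots,h_r}(n)$ has the shape
\[
m\binom{k}{j}e_j(h)+R_j(m,h),
\]
where $R_j(m,h)$ is a linear combination of $e_0(h),\ldots,e_{j-1}(h)$.

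The decisive observation is then immediate. Let $j^*$ be the least index in $\{0,1,\ldots,k\}$ with $e_{j^*}(h)\neq 0$ (if no such index exists then $P_{m,h_1,\ldots,h_r}\equiv 0$). For every $j<j^*$ both $e_j(h)=0$ and $R_j(m,h)=0$, so every coefficient of $n^{k-j}$ vanishes for $j<j^*$. For $j=j^*$ the remainder $R_{j^*}(m,h)$ is still a combination of $e_0(h),\ldots,e_{j^*-1}(h)$ and therefore vanishes, while the main term is $m\binom{k}{j^*}e_{j^*}(h)\neq 0$. Hence the leading coefficient of $P_{m,h_1,\ldots,h_r}(n)$ equals $m\cdot P(h_1,\ldots,h_r)$ with $P(h):=\binom{k}{j^*}e_{j^*}(h)\in\Z[h_1,\ldots,h_r]$, as claimed.

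The only point requiring care is that the specific polynomial $P$ depends on $j^*$, i.e.\ on which weighted power sum $e_{j}(h)$ first becomes nonzero; but the statement only asks for the \emph{existence} of such a $P$, so this is harmless. The real content of the argument is the separation principle: the $q_m$-correction to the coefficient of $n^{k-j}$ is built only out of $e_0(h),\ldots,e_{j-1}(h)$, so it cannot pollute the leading term once those lower-index sums are forced to vanish. This is what lets us extract the clean factor of $m$ in the leading coefficient and will be crucial in the subsequent PET-type reduction to nilsystems in Proposition~\ref{P:seminorm2}.
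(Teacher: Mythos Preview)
Your proof is correct and follows essentially the same idea as the paper's, just unpacked more explicitly. The paper's argument is a one-line degree comparison: since $\sum_i l_i(t+h_i)^j$ is, up to a constant, the $(k-j)$-th derivative of $\sum_i l_i(t+h_i)^k$, the former has strictly smaller degree in $t$ whenever it is nonzero, so the $q_m$-part can never reach the top degree of the $mn^k$-part and the leading coefficient comes solely from $m\sum_i l_i(n+h_i)^k$. Your introduction of the power sums $e_j(h)=\sum_i l_i h_i^j$ and the observation that the $q_m$-contribution to the $n^{k-j}$ coefficient involves only $e_0,\ldots,e_{j-1}$ is exactly the same mechanism spelled out coefficient by coefficient; the differentiation trick in the paper is just a slick way of encoding the vanishing of those lower $e_j$'s. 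Your remark about $j^*$ depending on $h$ is a fair observation, but note that in the actual application (Proposition~\ref{P:seminorm2}) the combinations $P_{m,i}$ produced by the PET procedure are constant in $n$ identically in $h$, which forces $j^*=k$ uniformly, so a single polynomial $P$ does the job there.
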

\begin{proof}
  %% Let $P_j(t)=\sum_{i=1}^r l_i \ \! (t+n_i)^j$.  Since
  %% $P^{(k-j)}(t)=(k-j)!P_j(t)$has degree greater than %%the degree of the polynomials $ \sum_{i=1}^r l_i \ \! (n+n_i)^j$ for all $0\leq j<k$
  For every choice of integers $l_i,n_i$, and positive integer $j$
  with $j<k$, the degree of the polynomial $\sum_{i=1}^r l_i \ \!
  (t+h_i)^k$ is greater than the degree of the polynomial
  $\sum_{i=1}^r l_i \ \! (t+h_i)^j$ (up to a constant, we get the
  second polynomial by differentiating the first several times), as
  long as the two polynomials are not identically zero.  Hence, the
  leading coefficient of the polynomial $P_{m,h_1,\ldots,h_r}(n)$ is
  the same as the leading coefficient of the polynomial $ m\cdot
  \sum_{i=1}^r l_i \ \! (n+h_i)^k.  $ The result follows.
\end{proof}

\begin{proof}[Proof of Proposition~\ref{P:seminorm2}]
We remark that  throughout the proof all the implied constants depend only on the type of
  the polynomial family  $\mathcal{P}$.

  Without loss of generality we can assume that
  $\norm{f_i}_{L^\infty(\mu)}\leq 1$ for $i=0,1,\ldots,\ell$, and $f_1
  \bot \mathcal{Z}$.  Let
  $$
 A_M=\frac{1}{M}\sum_{m=1}^M \Big|\frac{1}{N_m}\sum_{n=1}^{N_m} \int f_0\cdot  T^{p_m(n)} f_1
    \cdot T^{2p_m(n)}f_2\cdot\ldots\cdot T^{\ell p_m(n)}f_\ell\ d\mu\Big|
     $$
  We shall use Lemma~\ref{L:VDCmain} to get a bound for the averages
  $A_M$ that is independent of the polynomials $q_m$ (remember $p_m(n)=r(mn^k+q_m(n))$).

  Using the Cauchy-Schwarz inequality we have for every $t\in\N$ that
  \begin{equation}\label{E:CS}
    |A_M|^{2^t} \leq \frac{1}{M}\sum_{m=1}^{M}
      \Big| \frac{1}{N_m}\sum_{n=1}^{N_m} \int f_0\cdot  T^{p_m(n)}f_1
      \cdot T^{2p_m(n)}f_2\cdot\ldots\cdot T^{\ell p_m(n)}f_\ell\ d\mu\Big|^{2^t}.
  \end{equation}
  If $r$ is the integer given by Lemma~\ref{L:VDCmain}, letting $t=r$ we have that
  there exist $s \in \N$, non-constant essentially distinct polynomials
  $P_{m,1},\ldots,P_{m,s}\colon \Z^t\to\Z$, and functions
  $F_0,\ldots,F_s\in L^\infty(\mu)$, such that $F_1=f_1$, and
  \begin{multline}\label{M:22}
    \Big| \frac{1}{N_m}\sum_{n=1}^{N_m} \int f_0 \cdot T^{p_m(n)}f_1
      \cdot T^{2p_m(n)}f_2\cdot\ldots\cdot T^{\ell p_m(n)}f_\ell \ d\mu\Big|^{2^t}\ll \\
    \frac{1}{|H_m|}\sum_{1\leq h\leq H_m}\Big|\int F_0 \cdot
    T^{P_{m,1}(h)}F_1\cdot \ldots\cdot T^{P_{m,s}(h)}F_s \ d\mu \Big|
    +o_{N_m,H_m, H_m\prec N_m}(1)
  \end{multline}
  where $H_m=(H_{m,1},\ldots,H_{m,t})$.  By Lemma~\ref{L:VDCmain}, the
  polynomials $P_{m,1},\ldots, P_{m,s}$ have the form $ \sum_{i=1}^t
  l_i \ \! p_m(n+h_i') $ where $h_i'= \sum_{j\in I_{i,m}}h_j$ for some
  subsets $I_{i,m}$ of $\{1,\ldots,t\}$.  Since the polynomials
  $P_{m,1},\ldots,P_{m,s}$ are constant in $n$, and
  $p_m(n)=r(mn^k+q_m(n))$ for some $q_m\in \Z[x]$ with $\deg{q_m}\leq
  k-1$, we get from Lemma~\ref{L:leading} that
$$
P_{m,i}(n)=rm\! \cdot P_i(h),
$$ for $i=1,\ldots,s$,  for some non-constant essentially distinct polynomials
$P_1\ldots, P_s\colon \Z^t\to \Z$. Keeping this in mind, and putting
together \eqref{E:CS} and \eqref{M:22} we find that
$$
|A_M|^{2^t} \ll\frac{1}{|\Phi_M|}\sum_{(m,h)\in
  \Phi_M}\Big|\int F_0 \cdot\ T^{rmP_1(h)}F_1\cdot \ldots \cdot\
T^{rmP_s(h)}F_s\ d\mu\Big| +o_{M,H_m, H_m\prec N_m}(1)
$$
where
$$
\Phi_{M}=\{ (m,h)\in \mathbb{N}^{t+1}\colon 1\leq m\leq M, \ 1\leq
h\leq H_m\ \}.
$$
We can  choose  $H_m$ such that $H_m/N_m\to 0$ and
$(\Phi_M)_{M\in\N}$ forms a F{\o}lner sequence of subsets of
$\N^{t+1}$.
%%(for example choose $H_m\to \infty$ such that $H_m/N_m\to
%%0$ and the sequence $H_{m+1}- H_{m}$ remains bounded).
Since
$F_1=f_1\bot \mathcal{Z}$, using  Corollary~\ref{C:L2'} we get that the last expression converges to zero
when $M\to\infty$. This shows that $A_M$ converges to zero in
$L^2(\mu)$ as $M\to \infty$ and completes the proof.
\end{proof}

%% \appendix
%% \begin{center}
%%   APPENDIX
%% \end{center}

\section{Appendix: Single recurrence for Hardy
field  sequences}\label{SS:single}
%% \subsection{Single recurrence for Hardy sequences (proof of
%%   Theorem~C')}
%% \emph{Notation}: Let $a(n), b(n)$ be two Hardy sequences. We say
%% that $a(n)$ is good for single recurrence if
%% $S=\{[a(1)],[a(2)],\ldots\}$ is a set of single recurrence. When we
%% write $[a(n)]=[b(n)]$ we mean that the equality holds for all large
%% $n\in \N$. If $a(x)\sim b(x)$ if $a(x)-b(x)\to 0$ as $x\to \infty$.
In this last section we deal with single recurrence properties of Hardy field sequences
and improve upon the single recurrence versions of Theorems~A and A'.

Sark\"ozy (\cite{Sa}), and independently Furstenberg (\cite{Fu2}),
showed that if a non-constant polynomial $q\in\Z[x]$ has zero constant term, then
the sequence $q(n)$ is good for single recurrence. More
generally, the same is true for (non-constant) sequences of the form
$[q(n)]$ where $q\in \R[x]$, with $q(0)=0$ (\cite{BH}).
 If the constant term of the polynomial $q\in \R[x]$ is non-zero, then the sequence
 $[q(n)]$ is still good for single recurrence, provided that
  $q$ is not of the form  $q=cp+d$ for some $p\in\Z[x]$ and $c,d\in \R$. In this case
  determining whether $[q(n)]$   is
good for recurrence depends on intrinsic properties of the polynomial
$q$.\footnote{It can be shown that the sequence $q(n)$, where $q\in
  \Z[x]$, is good for recurrence if and only if the set $\{q(n)\colon
  n\in\N$\} contains multiples of every positive integer (\cite{KM}),
  and the sequence $[an+b]$, $a,b\in\R$, is good for
  recurrence if and only if there exists an integer $k$ such that
  $b+ka\in [0,1]$ (\cite{BH}).} For example, the sequences
$3n+3$, $n^2-1$, $[\sqrt{5}n+1]$, $[\sqrt{5}n+3]$
are good for recurrence, but the sequences $3n+1$,
$n^2+1$, $[\sqrt{5}n+2]$,
are bad for recurrence.

We shall show that if the function $a\in \H$ has polynomial growth and stays away from
polynomials of the form $cp(x)$, where $p\in \Z[x]$ and $c\in\R$, then
the sequence $[a(n)]$ is always good for single recurrence. This is
the statement of Theorem~C' which we now repeat.
%% \footnote{More generally the sequence $([\sqrt{5}n+c])_{n\in\N}$ is
%%   bad for recurrence only if for %%every $k\in \Z$ we have $c+k\sqrt{5}\notin [0,1]$. On the other hand, the sequence %%$([\sqrt{5}n^2+c])_{n\in\N}$ is bad for recurrence if and only if for every $k\in \Z$ we have %%$c+k^2\sqrt{5}\notin [0,1]$, which happens to be the case for more values of $c$ than in the linear %%case.}

%%   \begin{theorem}\label{T:singlehardy} Let $a(x)$ be a
%%     subpolynomial Hardy field function. If $a(x)-cp(x)\to \infty $
%%     for every $p\in \Q[x]$ and $c\in \R$, then
%%     $S=\{[a(1)],[a(2)],\ldots\}$ is a set of single recurrence.

\begin{theoremC'}
  Let $a\in\H$ have  polynomial growth and suppose that  $|a(x)-cp(x)|\to \infty $ for every $p\in
  \Z[x]$ and
  $c\in \R$.

  Then $S=\{[a(1)],[a(2)],\ldots\}$ is a set of single
  recurrence.
\end{theoremC'}

We are going to use the following lemma:

\begin{lemma}\label{L:t}
  Suppose that $p\in \R[x]$ is non-constant with leading coefficient
  $\alpha$, the real number $\beta$ is such that
  $1/\beta \notin \mathbb{Q}/ \alpha+\mathbb{Q}$, and
  %% Define the sequence $(S_M)_{M\in\N}$ by
%%$$
%%S_M=\{(m,n)\in\Z^2\colon 1\leq m\leq M, \ n_m\leq n\leq n_m+N_m\}
%%$$
  $n_m,N_m\in\N$ are such that $N_m\to\infty$ as $m\to\infty$.
  %% \footnote{Notice that $(S_M)_{M\in\N}$ is not necessarily a
  %%   F{\o}lner sequence.}

  Then for every $t\in (0,1)$, and Riemann integrable function
  $\phi\colon \T\to \C$, the averages
$$
\frac{1}{M}\sum_{m=1}^M\Big(\frac{1}{N_m} \sum_{n=n_m}^{n_m+N_m}
\phi(p(n)+m\beta)\cdot e\big([p(n)+m\beta]t\big)\Big)
$$
converge to zero as $M\to\infty$.
\end{lemma}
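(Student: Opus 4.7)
The plan is to Fourier-expand the integrand and reduce the problem to one-frequency exponential sums. Define $G : \T \to \C$ by $G(y) = \phi(y)\, e(-yt)$ for $y \in [0, 1)$, extended by periodicity; then $G$ is Riemann integrable on $\T$. Writing $y = [y] + \{y\}$ one checks the pointwise identity $\phi(\{y\})\, e([y]t) = e(yt)\, G(\{y\})$ for every $y \in \R$. Fourier-expanding $G = \sum_k \hat G(k)\, e(k\cdot)$ and using $e(k\{y\}) = e(ky)$ for $k \in \Z$ yields the formal identity
$$\phi(\{y\})\, e([y]t) \;=\; \sum_{k \in \Z} \hat G(k) \, e((t+k)y).$$
Approximating the Riemann integrable $G$ by a trigonometric polynomial in $L^1(\T)$, and bounding the error via joint equidistribution modulo $1$ of $p(n)+m\beta$ (which uses that $\beta \notin \Q$, a consequence of the hypothesis), reduces the lemma to proving, for each fixed $k \in \Z$,
$$E_k(M) \;:=\; \frac{1}{M}\sum_{m=1}^{M} e\big((t+k)\, m\beta\big) \cdot \frac{1}{N_m}\sum_{n=n_m}^{n_m+N_m} e\big((t+k)\, p(n)\big) \;\longrightarrow\; 0.$$

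Now fix $k \in \Z$ and set $s = t+k$, which is non-zero since $t \in (0,1)$. If the polynomial $s \cdot p$ has some irrational non-constant coefficient --- in particular if $s\alpha \notin \Q$ --- then Weyl's equidistribution theorem, in its uniform-in-starting-point form (obtained by the standard Weyl-differencing trick), gives $\frac{1}{N}\sum_{n=n_0+1}^{n_0+N} e(sp(n)) \to 0$ uniformly in $n_0$ as $N \to \infty$, and so $E_k(M) \to 0$. Otherwise all non-constant coefficients of $sp$ lie in $\Q$; in particular $s\alpha = q \in \Q\setminus\{0\}$, giving $s = q/\alpha$. The sequence $e(sp(n))$ is then periodic in $n$ (modulo the phase $e(sb_0)$ coming from the constant term of $p$), so the inner Cesaro average converges as $N_m \to \infty$ to a constant $L$ independent of $n_m$, and a straightforward bookkeeping argument gives $E_k(M) \to L \cdot \lim_{M\to\infty}\frac{1}{M}\sum_{m=1}^{M} e(sm\beta)$. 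It suffices to show the latter limit is $0$, i.e. that $s\beta \notin \Z$. But if $s\beta = n \in \Z$ we would have $n \neq 0$ and
$$\frac{1}{\beta} \;=\; \frac{s}{n} \;=\; \frac{q}{n\alpha} \;\in\; \Q/\alpha \;\subset\; \Q/\alpha + \Q,$$
contradicting the hypothesis on $\beta$. Hence $E_k(M) \to 0$.

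The main technical obstacle lies in the truncation step of the first paragraph: because $G$ is only Riemann integrable and, for $t \notin \Z$, has a genuine jump discontinuity at $0$, its Fourier series does not converge uniformly, and we cannot simply interchange the $k$-sum with the averaging. One must approximate $G$ by a trigonometric polynomial $G_K$ with $\int_\T |G - G_K|\,dm_\T < \varepsilon$ and bound the residual term $\frac{1}{M}\sum_m\frac{1}{N_m}\sum_n |G - G_K|(\{p(n) + m\beta\})$. This is controlled by the joint equidistribution modulo $1$ of the finite sequence $\{p(n)+m\beta\}$, which follows from Weyl's theorem applied either to $p$ (when some non-constant coefficient is irrational) or to $m \mapsto m\beta$ (when $\beta$ is irrational) --- both possibilities being underwritten by the hypothesis on $\beta$.
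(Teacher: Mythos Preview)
Your argument is correct and follows the same overall strategy as the paper---reduce to single-frequency exponential sums and then split according to whether the relevant polynomial has an irrational non-constant coefficient---but the reduction itself is organized differently. The paper separates the analysis into two cases from the outset: for $t$ irrational it proves full equidistribution of the pair $\big((p(n)+m\beta)t,\,p(n)+m\beta\big)$ in $\T^2$ (checking characters $e(l_1x+l_2y)$, hence frequencies $l_1t+l_2$), while for $t$ rational it computes the limit directly as an integral over a one-dimensional subgroup of $\T^2$ and shows it vanishes via a root-of-unity identity. Your route, Fourier-expanding $G(y)=\phi(y)e(-ty)$ on $\T$, produces only the frequencies $t+k$ and handles rational and irrational $t$ by a single uniform argument; in particular the paper's root-of-unity computation is absorbed into your ``$e(sp(n))$ is periodic, so the inner average converges to a constant and $s\beta\notin\Z$ finishes it'' step. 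Your version is a bit more streamlined; the paper's $\T^2$ formulation has the minor advantage that for irrational $t$ one works only with continuous test functions, so the truncation issue you flag in your last paragraph does not arise there. Your handling of that truncation---$L^1$-approximating the Riemann integrable $G$ by a trigonometric polynomial and controlling the remainder via the equidistribution of $p(n)+m\beta$ modulo $1$, which in turn follows from the irrationality of $\beta$---is correct and is exactly the standard maneuver.
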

\begin{proof}
  Suppose first that $t$ is irrational.
 We shall show that for every Riemann integrable function
 $f$ of $\T^2 $ we have
\begin{equation}\label{E:fkl}
\lim_{M\to\infty}\frac{1}{M}\sum_{m=1}^M\Big(
\frac{1}{N_m}\sum_{n=n_m}^{n_m+N_m}
f\big((p(n)+m\beta)t,p(n)+m\beta\big)\Big)=\int_{\T^2} f(x,y)\ dx  dy.
\end{equation}
Applying this for $f(x,y)=\phi(y)\cdot e(x-t\{y\})$, and noticing that
the integral of $f$ is zero, we  get the advertised result.

Next we verify \eqref{E:fkl}.
Using  a standard approximation argument by trigonometric
 polynomials we can assume that $f(x,y)=e(l_1x+l_2y)$ for some $l_1,l_2\in \Z$
 not both of them $0$. So it suffices to show that
 \begin{equation}\label{E:fkl'}
\lim_{M\to\infty}\frac{1}{M}\sum_{m=1}^M
\ e\big(m(l_1t+l_2)\beta\big)A_m=0
\end{equation}
where
$$
A_m=\frac{1}{N_m}\sum_{n=n_m}^{n_m+N_m}
e\big(l_1p(n)t+l_2p(n)\big).
$$

Suppose first that $l_1=0$. Then $l_2\neq 0$, and  since the sequence $A_m$ is known to converge and  $\beta$ is irrational, we get that  the limit in \eqref{E:fkl'} is $0$.

Suppose now that $l_1\neq 0$. Notice that the leading coefficient of the polynomial
  $l_1p(n)t+l_2p(n)$ is $(l_1t+l_2)\alpha$ and this is irrational
  if and only if $t\notin \mathbb{Q}/\alpha+\mathbb{Q}$. If this happens to be the case,
 then
$A_m\to 0$ and \eqref{E:fkl'} follows. Otherwise we have $t=q_1/\alpha+q_2$  for some $q_1,q_2\in \Q$. Since $t$ is assumed to be irrational we have $q_1\neq 0$ and $\alpha$ is irrational. Since the sequence $A_m$ is known to converge,  the limit in \eqref{E:fkl'} is equal to a constant times
\begin{equation}\label{E:fkl''}
\lim_{M\to\infty}\frac{1}{M}\sum_{m=1}^M
\ e\big(m(l_1q_1/\alpha+l_1q_2+l_2)\beta\big).
\end{equation}
We can rewrite $(l_1q_1/\alpha+l_1q_2+l_2)\beta$
as $(\tilde{q}_1/\alpha
+\tilde{q}_2)\beta$ for some $\tilde{q}_1, \tilde{q}_2\in \Q$ with $\tilde{q}_1\neq 0$.
Since $\alpha$ is irrational, this last expression is non-zero,  and as a consequence the limit
in \eqref{E:fkl''} is going to   be $0$ unless $(\tilde{q}_1/\alpha
+\tilde{q}_2)\beta$ is a non-zero integer. But this cannot be the case since by our assumption
$1/\beta \notin \mathbb{Q}/ \alpha+\mathbb{Q}$. Therefore, the limit
in \eqref{E:fkl''} is  $0$ completing the proof of \eqref{E:fkl}.

It remains to deal with  the case where  $t\in (0,1)$ is rational. For convenience we shall assume
that $t=1/k$ for some integer $k$ with $k\geq 2$ (if the numerator of $t$ is not $1$ the argument is similar).  Using a standard
approximation argument we can assume that $\phi(y)=e(ly)$ for some
$l\in\Z$. Then the limit in question becomes
$$
\lim_{M\to\infty}\frac{1}{M}\sum_{m=1}^M\Big( \frac{1}{N_m}
\sum_{n=n_m}^{n_m+N_m}
e\big((l+1/k)(p(n)+m\beta)-\{p(n)+m\beta\}/k\big)\Big).
$$
Arguing as in the proof of \eqref{E:fkl} we can show that the sequence $\big((l+1/k)(p(n)+m\beta),
p(n)+m\beta\big)$ is equidistributed  in the subset $H=\big\{\big((lk+1)x, kx\big)\colon x\in \T\big\}$ of $\T^2$ with respect to the sequence
$(S_M)_{M\in\N}$, where
$$S_M=\{(m,n)\in\Z^2\colon 1\leq m\leq M, \ n_m\leq n\leq n_m+N_m\}.$$
 Therefore, the last limit is equal to
$$
\int_0^1 e\big((lk+1)x- \{kx\}/k\big) \ dx=\sum_{i=0}^{k-1}
\int_{i/k}^{(i+1)/k}e\big((lk+1)x- (kx-i)/k\big)\ dx.
$$
Using the change of variables $x\to y+i/k$ we see that the last
expression is equal to
\begin{equation}\label{E:sum}
  \int_{0}^{1/k}e\big(lk y \big) \ dy \cdot \sum_{i=0}^{k-1} e((lk+1)i/k).
\end{equation}
Since $l\in \Z$ and $k\geq 2$ we have $lk+1\neq 0$.  It
follows that the sum appearing in \eqref{E:sum} is zero. This
completes the proof.
\end{proof}

\begin{proof}[Proof of Theorem~\ref{T:singlehardy}]
  %% We first prove the necessity of conditions $(i)$ and $(ii)$.
  %% Withought loss of generality we can assume that $c>0$.  If $(i)$
  %% holds then obviously $a(n)$ is bad for single recurrence. If
  %% $(ii)$ holds then for $\varepsilon$ small enough we have that
  %% $b-kc\notin [-\varepsilon,1+\varepsilon]$ for every $k\in\Z$. %%Let $e(x)=a(x)-cp(x)-b$, then $e(x)\to 0$ as $x\to\infty$.
  %% For every $k\in\Z$, large $n$, and $\varepsilon$ small enough we
  %% have
%% $$
%% [a(n)]\cdot 1/c-k=p(n)+\big(b-kc+e(n)-\{p(n)c+b\}\big)/c\notin
%% [-\varepsilon/c,\varepsilon/c]
%% $$
%% It follows that $a(n)$ is not good for recurrence for the rotation
%% by $1/c$ in the circle.

%% To prove the positive result
  If $|a(x)-c p(x)|\succ \log x$ for every $p\in \Z[x]$ and
  $c\in \R$, then by \cite{BKQW} the sequence $([a(n)])_{n\in\N}$
  is good for the ergodic theorem\footnote{This is an easy consequence of the spectral
    theorem and the following equidistribution result of Boshernitzan
    \cite{Bo3}: If $a\in \H$ has  polynomial growth and
    satisfies $|a(x)-p(x)|\succ\log x$ for every $p\in\Q[x]$, then the
    sequence $(a(n))_{n\in\N}$ is equidistributed in $\T$.},  and the
  result follows.

  %% {\bf Case 2.} Suppose that $a(x)= p(x)+b(x)$ for some $p\in
  %% \Q[x]$ and Hardy field function $b(x)$ that satisfies $1\prec
  %% b(x)\prec x$. Working with $a(kx)$ for some appropriate $k\in \N$
  %% we can assume that $p(x)\in \Z[x]$. Since $1\prec b(x)\prec x$
  %% for every $r\in\N$ we can find a sequence of intervals $I_{r,m}$,
  %% $m\in \N$, with $|I_{r,m}|\to \infty$ as $m\to\infty$, such that
  %% $[b(n)]\equiv 0 \pmod{r}$. It follows that for every $r\in \N$
  %% there exist $p_r\in Z[x]$ and $n_m\in\N$ such that
%%$$
%%\{rp_r(n), n_m\leq n\leq n_m+m\}\subset S.
%%$$

  Therefore,  we can assume that $a(x)= c p(x)+b(x)$ for some $p\in
  \Z[x]$, $c \in\R$, and $b\in \H$ that satisfies $1\prec
  b(x)\prec x$. Furthermore, we can assume that $b(x)\geq 0$ for
  large $x$ (the other case can be treated similarly).  If $c=0$, then
  the range of the sequence $([b(n)])_{n\in\N}$ contains all large
  enough integers (see Lemma~\ref{L:basic}) and so forms a set of recurrence.  If $c\neq 0$, let
  $\beta\in \R$ be such that $1/\beta \notin \mathbb{Q}/ c+\mathbb{Q}$.  Since $b(x)\prec x$ we have $b(x+1)-b(x)\to 0$ (see
  Lemma~\ref{L:basic}), and since $b(x)\to \infty$ it follows
  that there exist integers $N_m$ with $N_m\to\infty$ and intervals
  $I_m=[n_m,n_m+N_m]$, where $n_m\in\N$, such that
  \begin{equation}\label{E:J1}
    \lim_{m\to\infty}\sup_{n\in I_m}|b(n)-m\beta|= 0.
  \end{equation}
  Let
  \begin{equation}\label{E:J2}
    J=\{(m,n)\in\N^2\colon \{c p(n)+m\beta\}\in [1/2,3/4]\}.
  \end{equation}
  Consider the sequence $(S_M)_{M\in\N}$ where
$$
S_M=\{(m,n)\in \N^2\colon 1\leq m\leq M,\ n_m\leq n\leq n_m+N_m \}.
$$
Notice that because of \eqref{E:J1} and \eqref{E:J2}, for $(m,n)\in
S_M\cap J$ with $m$ big enough, we have $[c p(n)+b(n)]=[c
p(n)+m\beta]$.  Applying Lemma~\ref{L:t} for $\phi={\bf
  1}_{[1/2,3/4]}$, and noticing that the set $J$ has positive density
($=1/4$) with respect to the sequence $(S_M)_{M\in\N}$, we have for
every $t\in (0,1)$ that
$$
\lim_{M\to\infty}\frac{1}{|S_M|}\sum_{(m,n)\in S_M\cap J}
e\big([c
p(n)+b(n)]t\big)=\lim_{M\to\infty}\frac{1}{|S_M|}\sum_{(m,n)\in
  S_M\cap J} e\big([c p(n)+m\beta]t\big)=0.
$$
Using this and the spectral theorem we get that
$$
\lim_{M\to\infty}\frac{1}{|S_M|}\sum_{(m,n)\in S_M\cap J} \int f \cdot
T^{[c p(n)+b(n)]}f \ d\mu= \Big(\int f \ d\mu\Big)^2
$$
for every $f\in L^{\infty}(\mu)$. This implies the result and
completes the proof.
\end{proof}

\end{document}